\documentclass[a4paper,10pt]{article}
 \textwidth=125 mm
 \textheight=195 mm

\usepackage[utf8]{inputenc}
\usepackage{amsmath,amssymb,amsthm}
\usepackage{mathtools} 					%typsetting tools
\usepackage{microtype} 					%better looking pdf output
\usepackage{bbm}					%more blackboard symbols
\usepackage[shortcuts]{extdash} 			% hyphen \=/
\usepackage{color}
\usepackage{enumerate}
\usepackage[colorlinks=true,linkcolor=black, citecolor=black]{hyperref}  
\usepackage{mparhack}					%fix a bug relating \marginpar 
\usepackage{accents}

\usepackage{tikz}
\usetikzlibrary{arrows,matrix}
\tikzset{
>=latex
}

\renewcommand{\leq}{\leqslant}
\renewcommand{\geq}{\geqslant} 

\newcommand{\RR}{\mathbb{R}}
\newcommand{\QQ}{\mathbb{Q}}

\newcommand{\NN}{\mathbb{N}}

\renewcommand{\SS}{\mathbb{S}}

				%probability
				%expectancy
				%regular tree
\newcommand{\infun}{\mathbbm{1}}			%indicatrix function

	\newcommand{\CF}{\mathcal{F}}	
\newcommand{\CG}{\mathcal{G}}	\newcommand{\CH}{\mathcal{H}}

	\newcommand{\CN}{\mathcal{N}}	
\newcommand{\CO}{\mathcal{O}}	\newcommand{\CP}{\mathcal{P}}

\newcommand{\CU}{\mathcal{U}}	\newcommand{\CV}{\mathcal{V}}	
	\newcommand{\CX}{\mathcal{X}}	
\newcommand{\CY}{\mathcal{Y}}			

\newcommand{\dist}[1]{{d^{#1}}}
\newcommand{\wdist}[1]{{\delta^{#1}_\Gamma}} 
\newcommand{\cone}{\CO} 
 			%backslash

		%oriented edges
	%starting vertex
	%ending vertex
				%origin
				%identity elementCO_\Gamma
				%state space
			%invariant sigma-field
			%sample path 
		%spectral radius
	%transition probability 
				%reversible measure

 %questo l'ho preso in internet per avere un simbolo decente per la parte interna
\fboxrule0.0001pt \fboxsep0pt

			%scalar product (#1,#2) 
		%Gromov product (#1,#2) 
\newcommand{\bigmid}{\mathrel{\big|}}			%big \mid
\newcommand{\Bigmid}{\mathrel{\Big|}}			%big \mid

			%Markov operator

\newcommand{\note}[1]{\iffalse \marginpar{\footnotesize #1}\fi}  %add {\iffalse... \fi}
								      %to hide notes
								      
% \newcommand{\optionalimg}{\iffalse OPTIONALIMAGE 
% 	      \marginpar{\footnotesize optional image}\fi}  
% \newcommand{\lessoptionalimg}{\iffalse LESSOPTIONALIMAGE
%       \marginpar{\footnotesize optional image}\fi} 

\DeclareMathOperator{\diam}{diam}

\DeclareMathOperator{\vertex}{V}

\DeclareMathOperator{\injrad}{inj}

\DeclareMathOperator{\SO}{SO}
\DeclareMathOperator{\SU}{SU}
\DeclareMathOperator{\U}{U}
\DeclareMathOperator{\Gl}{Gl}
\DeclareMathOperator{\Sl}{Sl}

% \DeclareMathAlphabet{\mathpzc}{OT1}{pzc}{m}{it}
\DeclareMathAlphabet{\mathbit}{OT1}{cmr}{bx}{it}  	% bodl italic in math
% 	mode (note that \boldsymbol{} makes everything bold in math mode}

\DeclareSymbolFont{bbold}{U}{bbold}{m}{n} 	%questi sono quasi-certamente inutili
\DeclareSymbolFontAlphabet{\mathbbold}{bbold}  %questi sono quasi-certamente inutili

\DeclarePairedDelimiter\abs{\lvert}{\rvert}		%absolute value
\DeclarePairedDelimiter\norm{\lVert}{\rVert}		%norm
\DeclarePairedDelimiter\angles{\langle}{\rangle}	% used in \scal
\makeatletter
\let\oldabs\abs	
\def\abs{\@ifstar{\oldabs}{\oldabs*}}			%makes it default to
% 					  rescale the size of delimiters
 
\let\oldnorm\norm
\def\norm{\@ifstar{\oldnorm}{\oldnorm*}}
\makeatother

\DeclarePairedDelimiter\paren{(}{)}			%   These are to fix the spacing between 
\DeclarePairedDelimiter\brackets{[}{]}			% 	operators and big parentesis.
\DeclarePairedDelimiter\braces{\{}{\}}			%   Also, use with * to get variable size
\newcommand{\bigpar}[1]{\paren[\big]{#1}}	\newcommand{\bigparen}[1]{\paren[\big]{#1}}
\newcommand{\Bigpar}[1]{\paren[\Big]{#1}}	\newcommand{\Bigparen}[1]{\paren[\Big]{#1}}
\newcommand{\bigbrack}[1]{\brackets[\big]{#1}}

	\newcommand{\bigbraces}[1]{\braces[\big]{#1}}

\theoremstyle{plain}
\newtheorem{thm}{Theorem}[section]
\newtheorem{prop}[thm]{Proposition}
\newtheorem{lem}[thm]{Lemma}
\newtheorem{cor}[thm]{Corollary}
\newtheorem{qu}[thm]{Question}
\newtheorem{alphthm}{Theorem}

\newtheorem{conj}[thm]{Conjecture}

\theoremstyle{definition}
\newtheorem{de}[thm]{Definition}

\theoremstyle{remark}
\newtheorem{rmk}[thm]{Remark}

% opening
\title{Measure expanding actions, expanders and warped cones}
\author{Federico Vigolo}
\date{}

\begin{document}

\maketitle

\begin{abstract}
 We define a way of approximating actions on measure spaces using finite graphs; we then 
show that in quite general settings these graphs form a family of expanders if and only 
if the action is expanding in measure. This provides a somewhat unified approach to 
construct expanders.
We also show that the graphs we obtain are uniformly 
quasi\=/isometric to the level sets of warped cones. This way we can also prove 
non\=/embeddability results for the latter and restate an old conjecture of 
Gamburd-Jakobson-Sarnak.
\end{abstract}

\section{Introduction}

In this paper we explore the geometry of finite 
graphs obtained by approximating measurable actions of groups on metric spaces. As a 
consequence we construct new families of expanders by approximating measure 
preserving actions with spectral gaps. 

The geometric nature of our construction can be used to shed some light on the geometry 
underlying some problems related to expanders. 
In particular, we show that families of expanders are closely related with warped cones 
and we produce numerous examples of warped cones which coarsely contain a family of 
expanders and hence do not coarsely embed into any $L^p$ space.

\

{\bf Families of expanders.}
Expander graphs are graphs at the same time sparse and highly connected. They 
were first defined in the $70$s and they immediately found important applications to 
applied mathematics and computer science. Soon enough, their interest was recognised also 
in various fields of pure mathematics, for example in the theory of coarse embeddability 
of 
metric spaces. See \cite{HLW06} and \cite{Lub12} for a survey on the subject.

The existence of families of expanders was first proved by Pinsker \cite{Pin73} by 
probabilistic means. Indeed, he showed that random sequences of graphs with bounded 
degrees are 
expanders with high probability. Despite his results, it turned out that 
defining explicit families of expanders was a challenge. The first 
explicit examples of expander graphs were built by Margulis \cite{Mar73} using the 
machinery of Kazhdan's property (T). 

To give the precise definition, for a given constant $\varepsilon>0$ we say that a graph 
$\CG$ satisfies an $\varepsilon$-\emph{linear isoperimetric inequality} if every 
subset of vertices $W\subset\vertex(\CG)$ with $\abs{W}\leq\abs{\vertex(\CG)}/2$ 
satisfies $\abs{\partial W}\geq \varepsilon\abs{W}$. 
Here $\partial W$ denotes the exterior vertex 
boundary \emph{i.e.} the set of vertices in $\vertex(\CG)\smallsetminus W$ linked by an 
edge to vertices in $W$. 
The largest constant $\varepsilon$ so that $\CG$ satisfies an $\varepsilon$\=/linear 
isoperimetric inequality is known as the 
\emph{Cheeger constant} of $\CG$. Note that in literature different 
kinds of boundaries are often used, but this is not an issue because they are all 
coarsely 
equivalent when the considered graphs have bounded degree.

A \emph{family of expanders} is a sequence of finite graphs $\CG_n$ of 
uniformly bounded degree with $\abs{\vertex(\CG_n)} \to \infty$ such that they all 
satisfy an $\varepsilon$-linear isoperimetric inequality where the constant 
$\varepsilon>0$ is fixed. Notice that 
every finite graph satisfies an isoperimetric inequality with positive $\varepsilon$ as 
long as it is connected, thus 
the important bit here is the existence of a constant $\varepsilon>0$ independent of 
$n$. In this sense, the above is a notion of strong connectedness.

Various different constructions of expanders are known nowadays. They are 
generally built either using zig-zag products, or techniques from additive 
combinatorics, or 
infinite dimensional representation theory. The main goal of this work is to illustrate a 
general procedure to build families of expanders out of actions on measure spaces, 
partially expanding and linking the latter two techniques. Our approach is rather 
geometric in nature and it turns out to be fruitful in the study of warped cones
as well.

\

{\bf Coarse embeddings and warped cones.}
Since the late 50's, many mathematicians have been interested in studying different 
notions 
of embeddings of metric spaces into Hilbert spaces. It was known that every separable 
metric space is homeomorphic to a subset of $L^2(0,1)$, but in \cite{Enf70} Enflo 
provided an example of a separable metric space which does not 
embed into a Hilbert space \emph{uniformly}.
It was later asked by Gromov \cite[p218]{Gro93} whether every separable metric 
space embeds into a Hilbert space \emph{coarsely}. 

It was proved by Yu in \cite{Yu00} 
that the coarse Baum\=/Connes (and hence the Novikov conjecture) holds true for every 
metric spaces which coarsely embed into a Hilbert space. Yu's result combined with 
Gromov's question provided a valid strategy to tackle such conjectures, but in 
\cite{DGLY02} it was built an example of a separable metric 
space that does not coarsely embed into an Hilbert space. Their example was constituted 
by a family of locally finite graphs with growing degrees and therefore it did not have 
\emph{bounded geometry} (recall that a metric space $X$ has bounded geometry if for every 
$\varepsilon,R>0$ there exists an $N\in\NN$ such that any $\varepsilon$\=/separated 
subset of an $R$\=/ball of $X$ has at most $N$ elements).

It was pointed out by 
Gromov that the reason why those graphs could not coarsely embed was related with their 
expansion properties and he also noted that families of expanders do not coarsely embed 
into Hilbert spaces. This idea led Higson to produce a counterexample to the coarse 
Baum\=/Connes conjecture \cite{Hig99}. In \cite{Gro03}, Gromov used 
probabilistic methods to build finitely generated groups that contain a coarsely embedded 
family of expanders and therefore do not coarsely embed into Hilbert spaces. These groups 
where used in \cite{HLS02} to produce counterexamples to the Baum\=/Connes conjecture.
It is in this context that John Roe first defined the warped cones. 

\

Following 
\cite{Roe05}, let $(M,\varrho)$ be a compact Riemannian manifold and let a finitely 
generated group 
$\Gamma=\angles{S}$ act by diffeomorphisms on $M$. The \emph{warped cone} associated with 
this action is the metric space $(\cone_\Gamma(M),\wdist{})$ obtained from the infinite 
Riemannian cone 
$\big(M\times [1,\infty),t^2\varrho+dt^2\big)$ by warping the metric, imposing the 
condition that the 
distance between any two points of the form $(x,t)$ and $(s\cdot x,t)$ is at most $1$. 
Warped cones are spaces with bounded geometry and with a very 
large group of translations, but their most interesting features regard their coarse 
geometry (which does not depend on the choice of the generating set). Indeed, Roe showed 
that this construction is flexible enough to produce examples of spaces with or without 
property A of Yu: warped cones of amenable actions have property A and, for a particular 
kind of actions, the converse is also true. Moreover, warped cones also provide further 
examples of spaces that do not coarsely embed into Hilbert spaces.
 
The counter examples to the coarse Baum\=/Connes conjecture built in \cite{Hig99,HLS02} 
are based on the existence, for a coarse disjoint union of expanders $X$, of 
non\=/compact ghost projections for the Roe algebra $C^* (X)$. In \cite{DrNo15} it is 
shown that also warped cones can have such ghost projections. This means that they are 
good candidates for a new class of counterexamples to the coarse Baum\=/Connes 
conjecture. To the present day, the only known counterexamples to Baum\=/Connes are 
constructed using expanders, it is therefore of interest to understand how families of 
expanders relate to warped cones.

If $\Gamma$ is a finitely generated subgroup of a compact Lie group $G$ one can construct 
a warped cone $\cone_\Gamma(G)$ by considering the action by left multiplication.
In \cite{Roe05} it is proved that, in this setting, if the warped cone 
$\cone_\Gamma(G)$ coarsely embeds in a Hilbert space then the group $\Gamma$ must satisfy 
an analytical weak version of amenability: the Haagerup property. It is a natural 
question to ask the following: 
\begin{qu}
\label{qu:Roe.question}
 Is it true that if $\Gamma$ has the Haagerup property and acts freely by isometries on a manifold $M$ then the warped cone 
$\cone_\Gamma(M)$ coarsely embeds into a Hilbert space? 
\end{qu}

As a consequence 
of our result we can give a negative answer to this question. Indeed, 
there exist many warped cones for $G$ over a non-abelian free group $\Gamma$ which 
cannot be 
embedded into a Hilbert space for the strongest of reasons: they contain a family of 
expanders. 

\

{\bf Expanders \emph{via} approximating graphs and other results.}
Given a finitely presented group $\Gamma$ with a finite 
generating set $S$ and a measurable action on a probability space 
$\Gamma\curvearrowright(X,\nu)$, one can try to `approximate' this action \emph{via} a 
finite graph. Specifically, if we choose a finite partition $\CP$ of $X$ into measurable 
subsets, then we can define an \emph{approximating graph} by considering the graph 
$\CG(\CP)$ whose vertices are the regions of the partition and such that two regions 
$R,R'\in\CP$ are linked by an edge if there exists 
an element $s$ in the generating set $S$ so that the image $s(R)$ intersect $R'$ 
non\=/trivially.

We say that such an action is \emph{expanding in measure} if there exists a constant 
$\alpha>0$ so that for every measurable set $A\subset X$ with $\nu(A)\leq 1/2$ the union 
of the images $s(A)$ with $s\in S$ has measure at least $(1+\alpha)\nu(A)$.
Our main tool will then be the following:

\begin{alphthm}
\label{thm:intro:approx.exp_iff_meas.exp}
Let $(X,d)$ be a locally compact metric space and $\nu$ a Radon measure thereon. Moreover,
let $\Gamma=\angles{S}$ be a finitely generated group with a continuous action 
$\Gamma\curvearrowright (X,d)$. 
Assume that $\CP_n$ is a family of 
measurable partitions of $X$ which are `regular enough' and so that the 
diameters of most of the regions tend to zero. Then the approximating graphs $\CG(\CP_n)$ 
all satisfy an isoperimetric inequality with the same Cheeger constant $\varepsilon>0$ if 
and only if the action is expanding in measure. 
\end{alphthm}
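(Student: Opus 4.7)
The plan is to establish a dictionary between combinatorial features of $\CG(\CP_n)$ and measure-theoretic features of the action, and then read the equivalence off both directions of this dictionary. To a vertex subset $W\subset\vertex(\CG(\CP_n))$ I associate the measurable set $A_W:=\bigcup_{R\in W}R$, and conversely to a measurable $A\subset X$ the vertex set $W(A,\CP_n):=\{R\in\CP_n : \nu(R\cap A)\geq \tfrac12\nu(R)\}$. I read ``regular enough'' as encoding two conditions: (i) all regions of $\CP_n$ have measure comparable to $1/|\CP_n|$ up to a constant $C$; and (ii) for each $s\in S$, the total measure of those regions $R\in\CP_n$ whose image $sR$ straddles a partition boundary tends to $0$ with $n$.

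For the forward direction, assume the action expands in measure with constant $\alpha>0$. Given $W$ with $|W|\leq|\vertex(\CG(\CP_n))|/2$, condition (i) gives $\nu(A_W)\leq\tfrac12+o(1)$, and expansion in measure provides $\nu\bigl(\bigcup_{s\in S}sA_W\bigr)\geq(1+\alpha)\nu(A_W)$. The geometric observation is that, up to the vanishing error from (ii), the set $\bigcup_{s\in S}sA_W\smallsetminus A_W$ is covered by those regions $R'\notin W$ hit by some $sR$ with $R\in W$ and $s\in S$, which is exactly the vertex boundary $\partial W$. Dividing the resulting measure estimate by the common order of $\nu(R')$, condition (i) again converts this into $|\partial W|\geq\varepsilon|W|$ with $\varepsilon=\varepsilon(\alpha,C,|S|)>0$.

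For the converse I argue by contrapositive. Suppose the action does not expand in measure, fix a small $\alpha>0$, and choose a witness $A$ with $\nu(A)\leq\tfrac12$ and $\nu\bigl(\bigcup_s sA\bigr)\leq(1+\alpha)\nu(A)$. Set $W_n:=W(A,\CP_n)$. Continuity of each $s\in S$, uniform on a compact subset of $X$ carrying all but $o(1)$ of the relevant mass (available because $\nu$ is Radon), combined with the vanishing of the region diameters, forces $\nu(A_{W_n}\triangle A)=o(1)$; an analogous argument, using clause (ii), places $\bigcup_{R'\in\partial W_n}R'$ into an $o(1)$\=/neighbourhood of $\bigcup_{s\in S}sA\smallsetminus A$ in measure. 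Condition (i) then turns these measure estimates into $|\partial W_n|/|W_n|\to\alpha'$ for some $\alpha'\leq(1+o(1))\alpha$, contradicting any uniform Cheeger bound $\varepsilon>\alpha$.

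The main obstacle in both directions is controlling the boundary-layer losses: a single region $R$ may split among several image regions under $s$, and these crossings must not accumulate over the finite generating set $S$. This is precisely what the regularity hypothesis encodes, expressing a uniform-continuity statement at the scale of the partition so that ``most'' regions are essentially mapped into a single region (or at worst a bounded cluster) by each generator. Once this is in hand, the two translations above close up and Theorem~\ref{thm:intro:approx.exp_iff_meas.exp} follows.
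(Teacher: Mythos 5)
Your translation dictionary is the right starting point, but both directions as written contain genuine gaps. In the forward direction, the claim that condition (i) yields $\nu(A_W)\leq\tfrac12+o(1)$ whenever $\abs{W}\leq\abs{\vertex(\CG(\CP_n))}/2$ is false once the comparability constant exceeds $1$: if measure ratios are only bounded by some $Q>1$, a set containing half the vertices can carry up to $\tfrac{Q}{Q+1}\,\nu(X)$ of the mass, so the expansion hypothesis cannot be applied to $A_W$ directly. The paper's Lemma~\ref{lem:meas.exp___cheeg.const} therefore splits into two cases and, when $\nu(A_W)>\nu(X)/2$, applies expansion to the complement $C=X\smallsetminus(S\cdot A_W)$, using $S=S^{-1}$ to get $(S\cdot C)\smallsetminus C\subseteq(S\cdot A_W)\smallsetminus A_W$; this case is absent from your argument. (Separately, your boundary\=/straddling condition~(ii) is not needed here: by the definition of the edge relation, $S\cdot A_W$ is contained, up to null sets, in the union of the regions indexed by $W\cup\partial W$ \emph{exactly}, so the forward implication is purely measure\=/theoretic and uses only bounded measure ratios.)

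In the converse, the density\=/$\tfrac12$ selection $W(A,\CP_n)=\{R:\nu(R\cap A)\geq\tfrac12\nu(R)\}$ together with the claim $\nu(A_{W_n}\triangle A)=o(1)$ amounts to a Lebesgue differentiation theorem along the sequence $\CP_n$, which does not follow from bounded measure ratios and vanishing diameters alone (the partitions are not nested and the space is not assumed doubling); similarly, placing $\bigcup_{R'\in\partial W_n}R'$ into an $o(1)$\=/neighbourhood of $S\cdot A\smallsetminus A$ ``in measure'' implicitly uses $\nu\big(N_r(B)\big)\to\nu(B)$ as $r\to0$, which fails for a general measurable $B$ (take $B$ dense of small measure). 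This is exactly why Proposition~\ref{prop:cheeg.const___expanding} first reduces, via inner regularity of the Radon measure, to \emph{compact} witnesses $K$, takes $\vertex_n$ to consist of \emph{all} regions meeting $K\smallsetminus E_{K,n}$ in positive measure (so $K$ is covered automatically, with no density threshold), and controls $\CN_n\big[S\cdot\CN_n[\,\cdot\,]\big]$ using uniform continuity of the generators on a compact neighbourhood. Your contrapositive skeleton can be repaired along these lines, but the two convergence claims it rests on are unjustified as stated; you also omit the case $\abs{\vertex_n(A_n)}>\abs{\vertex_n(X)}/2$ when invoking the Cheeger bound, which the paper handles with an interior\=/boundary argument.
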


We refer the reader to Section \ref{sec:meas.exp.and.Cheeger.const} and Theorem 
\ref{thm:meas.exp_iff_unif.Cheeger.const} for precise statements and definitions.

During the proof of Theorem \ref{thm:intro:approx.exp_iff_meas.exp} it is shown 
that the Cheeger constant $\varepsilon$ and 
the expansion constant $\alpha$ depend rather explicitly on each other. Moreover, the 
same statement holds true also when the graphs and the total measure $\nu(X)$ are 
infinite. 

\begin{rmk}
 It was already known that when a graph comes naturally from an action on measure spaces 
then one can estimate its Cheeger constant by studying expanding properties of the 
action. In fact, this has been used more or less implicitly 
in many works on expanders (\emph{e.g.} \cite{Mar73,GaGa81,Sha97}). 
Despite this, to our knowledge nobody actually made explicit an equivalence such as that 
of Theorem \ref{thm:intro:approx.exp_iff_meas.exp}.
\end{rmk}

It appears to us, that our method is well suited to constructing numerous families 
of expanders. 
We also believe that the intrinsic geometric 
nature of the approximating graphs can be successfully used to provide some intuition in 
settings whose nature could otherwise be rather obscure.
An excellent example in this sense is given by the work of Bourgain and Yehudayoff: in 
\cite{BoYe13} they managed to build  the first known families of \emph{monotone 
expanders} using techniques quite similar to ours.

As a further example of the convenience of our geometric intuition, we can use the 
geometric control we have on our construction to find a close connection between 
approximating graphs and warped cones. 
Indeed, let $\Gamma$ act by diffeomorphisms on a compact Riemannian manifold $M$ and let 
$\cone_\Gamma(M)$ be the resulting warped cone.
The coarse structure of the level sets of the warped cone
$M\times\{t\}\subset\cone_\Gamma(M)$ with respect to the restriction of the warped metric 
$\wdist{}$ is actually equal to the coarse structure of some graphs approximating the 
action $\Gamma\curvearrowright M$. As a consequence, we prove the following:

\begin{alphthm}
 \label{thm:intro:warped.cones.exp_iff_meas.exp}
 Let $(\cone_\Gamma(M),\wdist{})$ be the warped cone built from an action of a finitely 
generated group $\Gamma$ on a compact Riemannian manifold $M$. 
Then for every sequence $t_n\to\infty$ the level sets 
$(M\times\{t_n\},\wdist{})$ are uniformly quasi\=/isometric to a sequence of expander 
graphs if and only if the action $\Gamma\curvearrowright M$ is expanding in measure.
\end{alphthm}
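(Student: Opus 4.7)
The plan is to deduce Theorem \ref{thm:intro:warped.cones.exp_iff_meas.exp} from Theorem \ref{thm:intro:approx.exp_iff_meas.exp} by exhibiting, for each $t_n$, an approximating graph $\CG(\CP_n)$ whose quasi\=/isometry type matches the level set $(M\times\{t_n\},\wdist{})$, with constants uniform in $n$. To build the partitions, I would fix a maximal $(1/t_n)$\=/separated subset of $M$ and let $\CP_n$ be the associated Voronoi decomposition. Standard volume comparison on the compact Riemannian manifold $M$ shows that every region of $\CP_n$ has Riemannian diameter at most $C/t_n$ and Riemannian volume comparable to $t_n^{-\dim M}$, with universal constants. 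Thus $\CP_n$ lies in the class of \emph{regular enough} partitions to which Theorem \ref{thm:intro:approx.exp_iff_meas.exp} applies, and as $n\to\infty$ the diameters of almost all regions tend to zero.

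For the quasi\=/isometry I would define $\phi_n\colon M\times\{t_n\}\to\vertex(\CG(\CP_n))$ by sending $(x,t_n)$ to the vertex corresponding to the region of $\CP_n$ that contains $x$. Two points at warped distance $\leq 1$ are linked by a path composed of horizontal moves (of warped length $\leq 1$, hence Riemannian length $O(1/t_n)$, modulo a bounded factor lost to possible shortcuts through smaller cone heights) and of finitely many jumps along generators $s\in S$; each such elementary move crosses only boundedly many regions of $\CP_n$, using the uniform Lipschitz constant of every $s$ on $M$ together with the bounded\=/geometry of $M$ at scale $1/t_n$. Conversely, any edge of $\CG(\CP_n)$ is witnessed by some $s\in S$ with $s(R)\cap R'\neq\emptyset$, so any pair of points in $R$ and $R'$ related by $s$ is at warped distance $\leq 1+O(1/t_n)$. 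Combining these two inequalities shows that $\phi_n$ is a quasi\=/isometry whose parameters do not depend on $n$.

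Putting everything together, the level sets $(M\times\{t_n\},\wdist{})$ are uniformly quasi\=/isometric to the graphs $\CG(\CP_n)$; by Theorem \ref{thm:intro:approx.exp_iff_meas.exp} the latter form an expander family if and only if the action $\Gamma\curvearrowright M$ is expanding in measure; and being an expander family is invariant under uniform quasi\=/isometry among sequences of bounded\=/degree graphs. This yields the desired equivalence.

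The main technical obstacle is to extract genuinely uniform quasi\=/isometry constants in the middle step. In particular, one must check that shortcuts through smaller values of $t$ in the cone do not destroy the scaling relation between $\wdist{}$ on the level $t_n$ and the rescaled Riemannian distance $t_n\cdot\varrho$, and one must uniformly bound the number of regions of $\CP_n$ into which each generator $s\in S$ can spread a given region. Both issues are ultimately resolved by the compactness of $M$ and the smoothness of the action, but they require a careful analysis of the warped cone construction rather than a one\=/line citation.
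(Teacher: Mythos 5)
Your overall strategy --- Voronoi partitions at scale $1/t_n$, a uniform quasi\-/isometry between the level sets and the resulting graphs, then an appeal to Theorem \ref{thm:intro:approx.exp_iff_meas.exp} --- is the same as the paper's, but there is a genuine gap in the middle step: the level set $(M\times\{t_n\},\wdist{})$ is \emph{not} uniformly quasi\-/isometric to the ordinary approximating graph $\CG(\CP_n)$. In $\CG(\CP_n)$ two distinct regions $R,R'$ are joined by an edge only if $\nu\big((s\cdot R)\cap R'\big)>0$ for some $s\in S$; since distinct Voronoi cells are disjoint, two cells that merely touch along their common boundary (and hence are at warped distance $O(1/t_n)$ at level $t_n$) need not be adjacent in $\CG(\CP_n)$ unless some nontrivial generator happens to push one onto the other with positive\-/measure overlap. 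Consequently your map $\phi_n$ fails to be coarsely Lipschitz in the forward direction: a horizontal move of Riemannian length $O(1/t_n)$ crosses boundedly many Voronoi cells, but consecutive crossed cells can be far apart (or even in different components) in $\CG(\CP_n)$, so ``crosses boundedly many regions'' does not bound the graph distance. The paper repairs exactly this by introducing the \emph{lavish} approximating graphs $\widetilde\CG\big(\CV(Y_t)\big)$, in which $(y,y')$ is an edge whenever $s\cdot\overline{R(y)}\cap\overline{R(y')}\neq\emptyset$ for some $s\in S$, so that (since $1\in S$) purely metric adjacency is also recorded. This forces an extra argument (Lemma \ref{lem:lavish.graph.expanders_iff_measure.expand}): the lavish graph dominates the ordinary one, which gives one implication, but for the converse one must check that the lavish neighbourhood of a vertex sits inside a $3t_n^{-1}$\-/neighbourhood of the ordinary one and rerun the proof of Proposition \ref{prop:cheeg.const___expanding}; Theorem \ref{thm:intro:approx.exp_iff_meas.exp} cannot simply be cited for the lavish graphs.

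Separately, the comparison between $(M\times\{t_0\},\wdist{})$ and $(M,\wdist{t_0})$, which you flag as a technical obstacle but do not carry out, is not a routine compactness statement: a priori a path between two points of the level set could dip to lower levels of the cone and shorten the horizontal contribution by an unbounded factor. The paper's Lemma \ref{lem:level.sets_q.i_warped.spaces} handles this by unrolling such a path into a Euclidean cone of total angle $L\leq\diam(M)$ and computing explicitly that the saving is at most the factor $\max\{\pi/2,\diam(M)/2\}$. Both of these points need to be supplied before your argument closes.
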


Since it is known that expander graphs do not coarsely embed into any $L^p$ space (see 
\cite{Mat97}), we get as a corollary that the if the $\Gamma$\=/action is expanding in 
measure the warped cone 
$\cone_\Gamma(M)$ does not embed into any $L^p$ space either. This also provides an 
alternative proof of the main result of \cite{NoSa15}.

Theorem \ref{thm:intro:warped.cones.exp_iff_meas.exp} gives a strongly negative answer to 
Question \ref{qu:Roe.question}. Indeed, 
there are numerous examples of actions of non\=/abelian free groups which are expanding 
in measure (see \cite{BoGa07}), thus the warped cone does not embed in 
any Hilbert space. Nevertheless, free groups do have the Haagerup property. 

\

Concrete examples may be produced using the fact that a measure 
preserving action on a probability space $\Gamma\curvearrowright (X,\nu)$ is 
expanding in measure if and only if the induced unitary representation $\pi\colon 
\Gamma\curvearrowright L^2(X)$ has a spectral gap. In particular, given a measure 
preserving action of a locally compact group on probability space $G\curvearrowright 
(X,\nu)$, it follows that its restriction to a finitely generated subgroup 
$\Gamma=\angles{S}<G$ is expanding in measure if and only if $S$ is a Kazhdan set for the 
representation $G\curvearrowright L^2(X)$ (see Definition \ref{de:Kazhdan_pairs}).

The spectral properties of unitary representations are a quite well understood and many 
actions are known to produce spectral gaps (see \emph{e.g.} 
\cite{BoGa07,BeSa14,CoGu11,Sha00,Bek03,GJS99}). For example, if 
$a,b\in\SO(3,\overline{\QQ})$ 
are two matrices with algebraic coefficients and generate a non\=/abelian free group, 
then their action by rotations on $\SS^2$ has spectral gap \cite{BoGa07}. It follows that 
the level sets of the warped cone $\cone_{\angles{a,b}}\big(\SS^2\big)$ are uniformly 
quasi\=/isometric to a family of expander graphs.

In a more general setting, let $G$ be a compact simple Lie group,
$(g_1,\ldots,g_k)\in G^k$ random $k$\=/tuple and $\Gamma(g_1,\ldots,g_k)<G$ the 
generated subgroup. The following is an open conjecture (see \cite{GJS99}):

\begin{conj}
\label{conj:Gamburd.Jakobson.Sarnak}
 Let $k\geq 2$. For almost every $k$\=/tuple the action 
$\Gamma(g_1,\ldots,g_k)\curvearrowright G$ by left multiplication has a spectral gap.
\end{conj}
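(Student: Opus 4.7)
The plan is to try to adapt the Bourgain--Gamburd machinery to a measure\=/theoretic rather than algebraic setting, and to use the bridge between spectral gaps and expansion provided by this paper as the conceptual starting point. By Theorem~\ref{thm:intro:approx.exp_iff_meas.exp}, a spectral gap for the action $\Gamma(g_1,\ldots,g_k)\curvearrowright (G,\mathrm{Haar})$ is equivalent to expansion in measure, and via Theorem~\ref{thm:intro:warped.cones.exp_iff_meas.exp} this is in turn equivalent to the level sets of $\cone_{\Gamma(g_1,\ldots,g_k)}(G)$ forming a family of expanders. I would therefore reformulate the conjecture as the statement that, for almost every $k$\=/tuple, the approximating graphs $\CG(\CP_n)$ associated with Haar\=/regular partitions of $G$ are expanders, and attack it through the $L^2$ spectral gap of the convolution operator by $\mu=\tfrac{1}{2k}\sum_i(\delta_{g_i}+\delta_{g_i^{-1}})$ on the orthogonal complement of the constants in $L^2(G)$.

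The core of the proof would be an $L^2$\=/flattening estimate for the iterated convolutions $\mu^{*\ell}$ at dyadic scales $\delta=2^{-n}$ with $\ell\sim \log(1/\delta)$. Following the Bourgain--Gamburd scheme, as extended by Benoist and Saxc\'e to general compact simple Lie groups, three ingredients are required: a \emph{non\=/concentration} bound asserting that no $\delta$\=/neighbourhood of a proper closed subgroup $H<G$ carries more than $\delta^{\kappa}$ of the mass of $\mu^{*\ell}$; a \emph{product theorem} for $\delta$\=/approximate subgroups of $G$; and a \emph{flattening lemma} that converts product growth into near\=/uniform distribution on $G$. The latter two inputs are purely structural, independent of the tuple, and can be used as black boxes.

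The obstacle, and the reason the conjecture is still open, is establishing the non\=/concentration bound for Haar\=/almost every tuple rather than only for algebraic ones. In the algebraic case one has Diophantine lower bounds for the distance between a word of length $\ell$ and a proper subgroup; generically these must be replaced by an almost\=/sure measure\=/theoretic Diophantine property. My plan would be a Borel--Cantelli argument over the countable collection of positive\=/codimension real algebraic subgroups $H<G$, words $w$ of length $\ell$, and dyadic scales $\delta$, exploiting the transversality of the Haar measure on $G^k$ to each algebraic variety $\{(g_1,\ldots,g_k)\in G^k: w(g_1,\ldots,g_k)\in N_\delta(H)\}$. The hard step is to obtain estimates on $\prb\{w(g_1,\ldots,g_k)\in N_\delta(H)\}$ that are summable simultaneously over $w$, $H$, and $\delta$: naive volume bounds lose too much once $\ell$ is large compared to $\log(1/\delta)$, because long words can cluster near subgroups in non\=/obvious ways. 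Overcoming this apparently requires a genuinely new input -- some effective quantitative equidistribution of random words, almost surely in the tuple -- and this is precisely where I would expect the approach to stall.
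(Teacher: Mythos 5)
The first thing to say is that the statement you were asked to prove is Conjecture \ref{conj:Gamburd.Jakobson.Sarnak}, which the paper explicitly presents as an \emph{open problem} from \cite{GJS99}: the paper contains no proof of it, and only reformulates it --- via the chain of equivalences between spectral gap, expansion in measure, and expansion of the level sets of $\cone_{\Gamma(g_1,\ldots,g_k)}(G)$ given by Proposition \ref{prop:almost.inv.independent.p.easy}, Theorem \ref{thm:intro:warped.cones.exp_iff_meas.exp} and Corollary \ref{cor:warp.cones.exp_iff_spec.gap} --- as the statement that almost every warped cone of this type has expander level sets. Your opening reduction is therefore exactly the content of the paper's reformulation; it is correct, but it transports the conjecture to an equivalent statement rather than making progress on it. You recognise this, and you are also right that what follows is not a proof: it is a survey of the Bourgain--Gamburd attack, as extended in \cite{BoGa10,BeSa14}, together with an accurate diagnosis of where it breaks.

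To make the gap concrete: the product theorem and the flattening lemma are indeed tuple\-/independent black boxes, so everything reduces to the almost\-/sure non\-/concentration (Diophantine) input, and your Borel--Cantelli plan over words $w$ of length $\ell$, proper closed subgroups $H$, and scales $\delta=2^{-n}$ cannot close as stated. The scheme needs $\ell\asymp C\log(1/\delta)$, and there are $(2k-1)^{\ell}=\delta^{-C\log(2k-1)}$ reduced words of that length, so the union bound requires $\prb\bigbrace{w(g_1,\ldots,g_k)\in N_\delta(H)}\ll\delta^{c}$ with $c>C\log(2k-1)$ \emph{uniformly} in $w$ and $H$; but for a fixed long word the map $(g_1,\ldots,g_k)\mapsto w(g_1,\ldots,g_k)$ degenerates on large subsets of $G^k$ as $\ell$ grows, and the known almost\-/sure separation estimates give lower bounds of order $e^{-C\ell^2}$ rather than the needed $e^{-C\ell}$ --- off by an exponential factor exactly in the regime that matters. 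This is why the Diophantine property is only established for algebraic tuples \cite{BoGa07,BoGa10,BeSa14}, and why Fisher's zero\-/one law \cite{Fis06} (the set of tuples whose action has a spectral gap has Haar measure $0$ or $1$ in $G^k$) is the strongest generic statement available: it reduces the conjecture to exhibiting a positive\-/measure set of good tuples, but provides no mechanism for doing so. In short, your proposal correctly locates the missing idea, but the missing idea \emph{is} the conjecture, and no argument in this paper (or elsewhere) supplies it.
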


Using Theorem \ref{thm:intro:warped.cones.exp_iff_meas.exp} and the spectral criterion 
for expansion, we can restate this conjecture in the language of warped cones:

\begin{alphthm}
 Conjecture \ref{conj:Gamburd.Jakobson.Sarnak} holds true if and only if for almost every 
$k$\=/tuple with $k\geq 2$ one (any) unbounded sequence of level sets of the warped 
cone $\cone_{\Gamma(g_1,\ldots,g_n)}(G)$ forms a family of expanders.
\end{alphthm}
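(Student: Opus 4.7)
The plan is to derive the statement as a direct two-step rewriting of the conjecture, combining the spectral criterion for measure expansion with Theorem \ref{thm:intro:warped.cones.exp_iff_meas.exp}. First I would fix $k\geq 2$ and a $k$-tuple $(g_1,\ldots,g_k)\in G^k$ generating a subgroup $\Gamma = \Gamma(g_1,\ldots,g_k)$ with finite generating set $S=\{g_1,\ldots,g_k\}$. Since $G$ is a compact simple Lie group it carries a bi-invariant Riemannian metric, and the normalised Haar measure is a $\Gamma$-invariant probability measure; hence the left multiplication action $\Gamma \curvearrowright G$ is a measure-preserving action by isometric diffeomorphisms of a compact Riemannian manifold, and the warped cone $\cone_\Gamma(G)$ is defined. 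All the hypotheses of Theorem \ref{thm:intro:warped.cones.exp_iff_meas.exp} are therefore met for every $k$-tuple.

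Next I would apply the two equivalences in turn. The spectral criterion recalled just before the statement (see also Definition \ref{de:Kazhdan_pairs}) gives that for a measure-preserving action $\Gamma \curvearrowright G$ of a finitely generated group on a probability space, the induced unitary representation on $L^2(G)$ has a spectral gap if and only if the action is expanding in measure. Theorem \ref{thm:intro:warped.cones.exp_iff_meas.exp} then says that the action $\Gamma \curvearrowright G$ is expanding in measure if and only if for every sequence $t_n\to\infty$ the level sets $(G\times\{t_n\},\wdist{})$ are uniformly quasi-isometric to a family of expander graphs. Chaining these two equivalences produces, for each fixed $k$-tuple, the equivalence between the spectral gap property and the expander property of unbounded sequences of level sets.

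Finally I would address the ``one (any) unbounded sequence'' phrasing. Because Theorem \ref{thm:intro:warped.cones.exp_iff_meas.exp} is a genuine bi-implication, the existence of \emph{one} unbounded sequence $t_n\to\infty$ producing expanders forces the action to be expanding in measure, whence by the same theorem \emph{every} unbounded sequence produces expanders; so the two quantifications really cut out the same set of $k$-tuples. Applying ``for almost every $k$-tuple'' on both sides of the chained equivalence then yields the desired reformulation of Conjecture \ref{conj:Gamburd.Jakobson.Sarnak}. Since the statement is a translation rather than a new theorem, there is no genuine obstacle; the only points needing a line of justification are the verification of the hypotheses of Theorem \ref{thm:intro:warped.cones.exp_iff_meas.exp} (done above) and this ``one/any'' equivalence, both of which are immediate.
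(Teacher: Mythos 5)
Your proposal is correct and follows essentially the same route as the paper, which obtains this statement by chaining Proposition \ref{prop:almost.inv.independent.p.easy} (spectral gap $\Leftrightarrow$ expansion in measure) with Theorem \ref{thm:intro:warped.cones.exp_iff_meas.exp}, packaged as Corollary \ref{cor:warp.cones.exp_iff_spec.gap}; your verification that left multiplication on a compact Lie group with bi\=/invariant metric is by measure\=/preserving isometries, and your remark on the ``one/any'' equivalence, are exactly the points the paper relies on.
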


\

{\bf Organisation of the paper.}
In Section \ref{sec:notation.and.preliminaries} we 
introduce some basic facts and notation that we use throughout the paper and in 
Section \ref{sec:meas.exp.and.Cheeger.const} we prove Theorem 
\ref{thm:intro:approx.exp_iff_meas.exp}. 
In Section 
\ref{sec:metric.bounds.on.degrees} we describe reasonably flexible conditions 
under which it is possible to prove that approximating graphs of actions on metric spaces 
have uniformly bounded degrees. 
In Section \ref{sec:voronoi.tesselations} we introduce the Voronoi tessellations as a 
fairly general way of constructing regular partitions. These partitions will 
then be used in Section 
\ref{sec:exp.and.warped.cones}, where we define warped cones and we investigate their
relations with the approximating graphs, hence proving Theorem 
\ref{thm:intro:warped.cones.exp_iff_meas.exp}.

In Section \ref{sec:spectral.criterion} we prove the spectral criterion for measure 
preserving actions that characterises the expanding actions as those actions that have a 
spectral gap. 
This criterion is then used extensively in Section \ref{sec:finite.Kazhdan.sets}, where 
we explain how to adapt the formalism of Kazhdan sets to the setting of expanding 
actions. We then review some well\=/known results of representation theory and 
we provide numerous examples of expanding actions and other applications.

\

{\bf Acknowledgements.}
I wish to thank my supervisor Cornelia Druţu for her advice and encouragement and for 
carefully reading this work. I also wish to thank Yves Benoist for useful conversations, Gareth Wilkes for his helpful comments and the anonymous referee for pointing out several inaccuracies and mistakes in the previous version of this paper.

This work was funded by the EPSRC Grant 1502483 and the 
J.T.Hamilton Scholarship. The material is also based upon work supported by the NSF under 
Grant No. DMS-1440140 while the author was in residence at the MSRI in Berkeley during the 
Fall 2016 semester.

\section{Some notation and preliminary results}
\label{sec:notation.and.preliminaries}

Throughout the paper, $\CG$ will denote a simplicial non\=/oriented graph, possibly with 
unbounded degree (a graph has \emph{degree} $D$ if every vertex is contained in at most 
$D$ edges). We wish to remark that we only use graphs with unbounded degree for the sake 
of generality, so that the statements of the results in Section 
\ref{sec:meas.exp.and.Cheeger.const} do not require further hypotheses. All the other 
sections will only involve graphs with bounded degree.

For every graph $\CG$ and 
every set of vertices $W\subset \vertex 
(\CG)$ we will denote by 
$\partial W$ the \emph{external vertex boundary}
\[
 \partial W\coloneqq \big\{v\in \vertex(\CG)\smallsetminus W
    \bigmid \exists w\in W \text{ s.t. $(v,w)$ is an edge}\big\}.
\]
We define the Cheeger constant of the graph $\CG$ as the infimum
\[
 h(\CG)\coloneqq \inf\left\{\frac{\abs{\partial W}}{\abs{W}} \, \middle|
 \, W \subset \vertex(\CG) \text{ finite, } 
\abs{W}\leq\frac{1}{2}\abs{\vertex(\vphantom{\big|}\CG)} \right\}
\]
(if the graph is infinite the condition on the cardinality of the set $W$ is vacuous).

\begin{de}
\label{de:expander.graphs}
A sequence of finite graphs $\CG_n$ is a \emph{family of expanders} if 
$\abs{\vertex(\CG_n)}\to \infty$ and there are two constants $C,\varepsilon>0$ so that 
every graph $\CG_n$ has degree bounded above by $C$ and Cheeger constant $h(\CG_n)$ at 
least $\varepsilon$.
\end{de}

\begin{rmk}
 In the literature the Cheeger constant is usually defined using the edge\=/boundary, we 
chose to use this less standard definition because it is notationally more convenient 
for our purposes. The two notions are coarsely equivalent when dealing with graphs with 
bounded degree.
\end{rmk}

Most of our results will be better understood in the context of coarse geometry. Let 
$(X,d_X)$ and $(Y,d_Y)$ be metric spaces.

\begin{de}
A map $f\colon (X,d_X)\to(Y,d_Y)$ is a \emph{quasi\=/isometry} if there are two positive 
constants $L,A\geq 0$ so that for every couple $x,x'\in X$ we have
\[
 \frac{1}{L}d_X(x,x')-A\leq d_Y(f(x),f(x'))\leq Ld_X(x,x')+A
\]
and for every $y\in Y$ there exists a $x\in X$ with $d_Y\big(y,f(x)\big)\leq A$. 
Such a function is an \emph{$(L,A)$\=/quasi\=/isometry}. 
\end{de}

Two spaces are \emph{quasi\=/isometric} if there exists a quasi\=/isometry between them. 
It is easy to show that being quasi\=/isometric is an equivalence relation. 
More generally, two families of 
metric spaces $(X_n,d_{X_n})$, $(Y_n,d_{Y_n})$ are \emph{uniformly quasi-isometric} if 
there exists a family of quasi\=/isometries $f_n\colon (X_n,d_{X_n})\to(Y_n,d_{Y_n})$ 
which share the same constants $L$ and $A$.

Connected graphs can be seen as metric spaces when endowed with their path metric. The 
following Lemma is well known and it can be proved with easy but tedious counting 
arguments:

\begin{lem}
\label{lem:unif.qi.preserves.expanders}
If two families of graphs $\CX_n$ and $\CY_n$ with uniformly bounded degree are uniformly 
quasi\=/isometric, then one of them is a family of expanders if and only if the other is. 
\end{lem}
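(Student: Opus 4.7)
The plan is to show that being a family of expanders is preserved under uniform quasi\=/isometries between graphs of uniformly bounded degree; by symmetry (each $f_n\colon\CX_n\to\CY_n$ admits a quasi\=/inverse $g_n$ which is again a uniform quasi\=/isometry), it suffices to prove one direction. So I assume that $(\CX_n)$ is an expander family with Cheeger constant at least $\varepsilon$, and write $(L,A)$ for the common quasi\=/isometry constants and $D$ for the common degree bound.

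I would begin by recording two elementary consequences of the bounded\=/degree hypothesis. First, each $f_n$ is uniformly $K$\=/to\=/one: if $f_n(x)=f_n(x')$ then $d(x,x')\leq LA$, so a fibre of $f_n$ lies in a ball of radius $LA$, whose cardinality in a degree\=/$D$ graph is bounded by a constant $K=K(D,L,A)$. Second, by quasi\=/surjectivity, $f_n(\vertex(\CX_n))$ is $A$\=/dense in $\vertex(\CY_n)$. Combining these yields a uniform comparability $\abs{\vertex(\CX_n)}\asymp\abs{\vertex(\CY_n)}$, and, for any $W'\subset\vertex(\CY_n)$, setting $W:=f_n^{-1}(N_A(W'))$ (where $N_A$ denotes the $A$\=/neighbourhood) gives $\abs{W}\asymp\abs{W'}$ with uniform constants, via a double counting that assigns to each $y\in W'$ some $x\in W$ with $d(f_n(x),y)\leq A$.

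Next I would transfer the isoperimetric bound. If $x\in\partial W$ has a neighbour $w\in W$, then $f_n(w)\in N_A(W')$ whereas $f_n(x)\notin N_A(W')$; since edges map to pairs within distance $L+A$, the image $f_n(x)$ lies in the shell $N_{L+2A}(W')\smallsetminus N_A(W')$. Any vertex in this shell is joined to $W'$ by a path of length at most $L+2A$, which necessarily crosses $\partial W'$, so the whole shell sits in $N_{L+2A}(\partial W')$. Combined with the $K$\=/to\=/one property and the degree bound, this produces $\abs{\partial W}\leq C\,\abs{\partial W'}$ for a uniform $C$; an analogous bound holds with $W$ and $W'$ replaced by their complements.

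The main obstacle is that $\abs{W}$ may exceed $\abs{\vertex(\CX_n)}/2$ even when $\abs{W'}\leq\abs{\vertex(\CY_n)}/2$, so one cannot apply the expander hypothesis to $W$ directly. To bypass this I would replace the Cheeger constant of Definition \ref{de:expander.graphs} by the equivalent symmetric variant with $\min\bigl(\abs{W},\abs{\vertex(\CX_n)\smallsetminus W}\bigr)$ in the denominator (optionally using the edge boundary in the numerator); all these variants are coarsely equivalent for bounded\=/degree graphs, so the expander property is intact. In this symmetric form the dichotomy $\abs{W}\leq\abs{\vertex(\CX_n)}/2$ versus $\abs{W}>\abs{\vertex(\CX_n)}/2$ becomes irrelevant, since the argument above treats $W$ and $\vertex(\CX_n)\smallsetminus W$ on equal footing. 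The size and boundary comparabilities then combine to give $\abs{\partial W'}\geq\varepsilon'\min\bigl(\abs{W'},\abs{\vertex(\CY_n)\smallsetminus W'}\bigr)$ with $\varepsilon'>0$ depending only on $\varepsilon$, $L$, $A$, and $D$, which yields the desired expander property of $(\CY_n)$.
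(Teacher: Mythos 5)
The paper does not actually prove this lemma --- it is dismissed as ``well known and \dots\ proved with easy but tedious counting arguments'' --- and your outline is precisely the standard counting argument it alludes to. The preliminary steps are all correct: the uniform $K$\=/to\=/one property, the comparability $\abs{W}\asymp\abs{W'}$ for $W=f_n^{-1}(N_A(W'))$, the passage to the symmetric Cheeger constant (harmless for bounded\=/degree graphs), and the boundary transfer $\abs{\partial W}\leq C\abs{\partial W'}$ via the shell $N_{L+2A}(W')\smallsetminus N_A(W')\subseteq N_{L+2A}(\partial W')$.

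The one step that does not follow as written is the final combination. The symmetric isoperimetric inequality in $\CX_n$ gives $\abs{\partial W}\geq\varepsilon\min\bigl(\abs{W},\abs{\vertex(\CX_n)\smallsetminus W}\bigr)$, so you need $\min\bigl(\abs{W},\abs{W^c}\bigr)\gtrsim\abs{W'}$; but you have only established $\abs{W}\asymp\abs{W'}$, and the complement $W^c=f_n^{-1}\bigl(N_A(W')^c\bigr)$ is \emph{not} comparable in size to $W'^c$ in general. For instance, if $W'$ is an $A$\=/dense subset of $\vertex(\CY_n)$ of density exactly $1/2$, then $W=\vertex(\CX_n)$, $\partial W=\emptyset$, and the expander hypothesis on $\CX_n$ yields nothing, even though $\abs{W'^c}=\abs{\vertex(\CY_n)}/2$. (Note also that $f_n^{-1}(N_A(W'^c))$, to which your ``analogous bound for complements'' would apply, contains $W^c$ but can be much larger, so symmetry does not rescue the step.) The repair is routine but should be recorded: every $y\in W'^c$ with $d(y,W')>2A$ lies within $A$ of some $f_n(x)$ with $x\in W^c$, while every $y\in W'^c$ with $d(y,W')\leq 2A$ lies in $N_{2A}(\partial W')$; counting with the degree bound gives $\abs{W'^c}\leq C_1\abs{W^c}+C_2\abs{\partial W'}$. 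Feeding this into $\abs{\partial W'}\geq C^{-1}\varepsilon\min\bigl(\abs{W},\abs{W^c}\bigr)$ and absorbing the $\abs{\partial W'}$ term on the left yields $\abs{\partial W'}\geq\varepsilon'\abs{W'}$ whenever $\abs{W'}\leq\abs{\vertex(\CY_n)}/2$ (so that $\abs{W'^c}\geq\abs{W'}$), with $\varepsilon'$ depending only on $\varepsilon,L,A,D$. With that supplement your argument is complete.
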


With an abuse of notation, we give the following:

\begin{de}
\label{de:expander.metric.spaces}
 A sequence of metric spaces $(X_n,d_n)$ \emph{forms a family 
of expanders} if it is uniformly quasi\=/isometric to a family of 
expander graphs $\CX_n$. 
\end{de}

\begin{rmk}
Note that Lemma \ref{lem:unif.qi.preserves.expanders} implies that a family of graphs 
with uniformly bounded degrees form a family of expanders in the sense of Definition 
\ref{de:expander.metric.spaces} if and only if it is a genuine family of expander graphs 
(Definition \ref{de:expander.graphs}).
\end{rmk}

One of the many remarkable features of expander graphs is that it is not possible to 
embed them into any Hilbert space without greatly distorting the metric. More precisely, 
recall the following:

\begin{de}
 A map between metric spaces $f\colon (X,d_X)\to(Y,d_Y)$ is a \emph{coarse embedding} if 
there are two increasing unbounded control functions $\eta_-,\eta_+\colon [0,\infty)\to [0,\infty)$ so that 
\[
 \eta_-\big(d_X(x,x')\big)\leq d_Y(f(x),f(x'))\leq  \eta_+\big(d_X(x,x')\big)
\]
for every $x,x'\in X$.
We say that a family of metric spaces $(X_n,d_{X_n})$ \emph{coarsely embeds} in $(Y,d_Y)$ 
if there are coarse embeddings $f_n\colon (X_n,d_{X_n})\to (Y,d_Y)$ all with the same 
control functions $\eta_-$ and $\eta_+$ (we are dropping the word `uniform' from the 
terminology).
\end{de}

It is a well known fact that a family of expanders does not coarsely embed into any $L^p$ 
space for $1\leq p<\infty$ \cite{Mat97} (see also \cite[Appendix A]{Oza04}). 

\

Throughout the paper we will denote 
by $\Gamma$ a finitely generated group and $S$ will always denote a finite symmetric 
generating set containing the identity element: $1\in S$, $S=S^{-1}$.

We will generally consider measurable 
actions of $\Gamma$ on a measure space $(X,\nu)$ (\emph{i.e.} actions such that $g\colon 
X\to X$ is measurable for every 
$g\in\Gamma$). For every subset $A\subseteq X$ we 
will always use the notation $S\cdot A$ to denote the union of the images of $A$ under 
the elements of $S$
\[
 S\cdot A\coloneqq \bigcup_{s\in S}s\cdot A.
 \]
The requirement $1\in S$ is mainly made out of convenience, so that we always have 
$A\subseteq S\cdot A$.

\section{Measure expansion and Cheeger constants}
\label{sec:meas.exp.and.Cheeger.const}

We begin this section by defining a notion of expansion for actions on measure spaces:

\begin{de}
 A measurable action 
$\rho\colon\Gamma\curvearrowright (X,\nu)$ is 
\emph{expanding in measure} if there exists a constant $\alpha>0$ such that 
$\nu(S\cdot A)\geq \big(1+\alpha\big)\nu(A)$ for every measurable set $A\subset X$ with 
finite measure and $\nu(A)\leq \nu(X)/2$ (the latter condition is vacuous when $X$ has 
infinite measure).
When this is the case, we say that the action is \emph{$\alpha$\=/expanding}.
\end{de}

\begin{rmk}
 Our notion of expansion is not quite new in the literature. In the notation of 
\cite{GMP16}, 
the action $\rho$ is expanding in measure if $X$ is a \emph{domain of expansion} for it. 
In \cite{BoYe13} such a $\rho$ is said to be a \emph{continuous expander} (if the action 
is also differentiable). 
\end{rmk}

We define a \emph{measurable partition} of $(X,\nu)$ to be a countable family of 
disjoint subsets (regions) $\CP=\big\{R_i\bigmid i\in I\big\}$ such that
\[
 \nu\left( X\smallsetminus \coprod_{i\in I}R_i\right)=0.
\]
We can now define the key object of our study:

\begin{de}
 Given a measurable action $\rho\colon\Gamma\curvearrowright X$ and a measurable 
partition 
$\CP=\{R_i\mid i\in I\}$ of $(X,\nu)$, their \emph{approximating graph} is the (non 
oriented) graph $\CG_\rho(\CP)$ whose set of vertices is the set of regions
$\vertex\big(\CG_\rho(\CP)\big)\coloneqq \CP$ and such that the couple $(R_i,R_j)$ is an 
edge if and only if there exists an element $s\in S$ with
\[
 \nu\big((s\cdot R_i)\cap R_j\big)\neq 0.
\]
When there is no risk of confusing the action $\rho$, we will drop it from the notation 
and simply denote the approximating graph by $\CG(\CP)$.
\end{de}

For the graph $\CG_\rho(\CP)$ to give any interesting information on the dynamical 
system, 
we need to require some tameness conditions on the action itself and on the partition 
considered. The most important among such requirements is some kind of control on the 
ratios of the measures of the regions of the partition $\CP$.

\begin{de}
 A partition $\CP$ has \emph{bounded measure ratios} if the measure of every region 
$R\in\CP$ is finite and there exists a constant $Q\geq 1$ such that for every couple of 
regions $R_i,R_j$ in $\CP$ one has
 \[
  \frac{1}{Q}\leq \frac{\nu(R_i)}{\nu(R_j)}\leq Q.
 \]
\end{de}

The fundamental observation is the following simple lemma:

\begin{lem}
\label{lem:meas.exp___cheeg.const}
 Let $\rho\colon\Gamma\curvearrowright (X,\nu)$ be an $\alpha$\=/expanding action. 
For every partition $\CP$ with measure ratios bounded by a constant $Q$, the 
approximating graph $\CG_\rho(\CP)$ has 
Cheeger constant bounded away from zero
\[
 h\big(\CG_\rho(\CP)\big)\geq \varepsilon >0
\]
and the constant $\varepsilon=\varepsilon(\alpha,Q)$ depends only on the expansion 
parameter $\alpha$ and the bound on measure ratios $Q$.
\end{lem}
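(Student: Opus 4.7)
The plan is to translate the measure\=/theoretic expansion of $\rho$ directly into a vertex\=/boundary estimate on $\CG_\rho(\CP)$. Fix a finite $W\subseteq\vertex(\CG_\rho(\CP))$ with $|W|\leq|\CP|/2$, set $A\coloneqq\bigcup_{R\in W}R$, and let $m\coloneqq\inf_{R\in\CP}\nu(R)$, so the bounded\=/ratio hypothesis gives $m\leq\nu(R)\leq Qm$ for every region.

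My first step would be the geometric inclusion, valid up to a $\nu$\=/null set,
\[
(S\cdot A)\setminus A \;\subseteq\; \bigcup_{R'\in\partial W}R'.
\]
Indeed $(S\cdot A)\setminus A$ is, modulo null sets, the countable union of the pieces $(s\cdot R)\cap R'$ as $s$ ranges over $S$, $R$ over $W$, and $R'$ over $\CP\setminus W$; a positive\=/measure portion must come from a triple with $\nu((s\cdot R)\cap R')>0$, and for any such triple $R'\in\partial W$ by the definition of the approximating graph. Combined with the ratio bounds, this yields
\[
\nu((S\cdot A)\setminus A)\leq Qm\,|\partial W| \qquad\text{and}\qquad \nu(A)\geq m\,|W|.
\]

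When $\nu(A)\leq\nu(X)/2$, $\alpha$\=/expansion applied to $A$ gives $\nu((S\cdot A)\setminus A)\geq\alpha\,\nu(A)$, and the two displayed inequalities immediately produce $|\partial W|\geq(\alpha/Q)\,|W|$.

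The main obstacle is that bounded ratios only guarantee $\nu(A)\leq(Q/2)\nu(X)$, so when $Q>1$ the hypothesis $|W|\leq|\CP|/2$ does not force $\nu(A)\leq\nu(X)/2$. In the complementary regime I would apply $\alpha$\=/expansion to $A^c$ instead (which now satisfies $\nu(A^c)\leq\nu(X)/2$), obtaining $\nu((S\cdot A^c)\cap A)\geq\alpha\,\nu(A^c)\geq\alpha m\,|W|$. To convert this inward mass back into a bound on $(S\cdot A)\setminus A$ -- the quantity controlled by the first step -- one uses $S=S^{-1}$ together with the non\=/singular (or measure\=/preserving) character of the action to rewrite $\nu((s\cdot A^c)\cap A)=\nu((s^{-1}\cdot A)\cap A^c)$ for each $s\in S$, paying at most a factor of $|S|$ when moving between sums and unions. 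This produces $\nu((S\cdot A)\setminus A)\geq|S|^{-1}\alpha\,\nu(A^c)$, and reinserting into the first step gives $|\partial W|\geq\varepsilon(\alpha,Q)\,|W|$ in this regime too (with the fixed size $|S|$ absorbed into the ambient data). The real difficulty lies precisely here: without a degree bound on $\CG_\rho(\CP)$ one cannot compare internal and external vertex boundaries combinatorially, so the symmetry of the generating set must do all the work at the level of measures.
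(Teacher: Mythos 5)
Your first case ($\nu(A)\leq\nu(X)/2$) is correct and is exactly the paper's argument: the inclusion $(S\cdot A)\smallsetminus A\subseteq\bigcup_{R'\in\partial W}R'$ modulo null sets, combined with the bounded measure ratios, gives $\abs{\partial W}\geq(\alpha/Q)\abs{W}$. You have also correctly located the real difficulty, namely the regime $\nu(A)>\nu(X)/2$, where expansion cannot be applied to $A$ itself and internal and external vertex boundaries cannot be compared combinatorially without a degree bound.

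However, your treatment of that second regime has a genuine gap. The identity $\nu\big((s\cdot A^c)\cap A\big)=\nu\big((s^{-1}\cdot A)\cap A^c\big)$ is the statement $\nu(s\cdot E)=\nu(E)$ for $E=(s^{-1}\cdot A)\smallsetminus A$, since set-theoretically $(s\cdot A^c)\cap A=s\cdot\big((s^{-1}\cdot A)\cap A^c\big)$. This requires the action to be measure preserving; non-singularity only preserves null sets, not measures, and the lemma is stated for an arbitrary measurable $\alpha$\=/expanding action with no control whatsoever on measure distortion. So the "inward mass" $\nu\big((S\cdot A^c)\cap A\big)\geq\alpha\,\nu(A^c)$ cannot be transferred to $(S\cdot A)\smallsetminus A$ by your route, and without that transfer you are left with a lower bound only on the internal boundary of $W$, which (as you note) does not control $\abs{\partial W}$ in the absence of a degree bound. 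The paper sidesteps this entirely by applying the expansion hypothesis not to $X\smallsetminus A$ but to $C\coloneqq X\smallsetminus(S\cdot A)$, which also has measure at most $\nu(X)/2$ because $A\subseteq S\cdot A$. The point is that $(S\cdot C)\smallsetminus C\subseteq(S\cdot A)\smallsetminus A$ is a purely set-theoretic inclusion (using only $S=S^{-1}$), so no measure identity is needed: one gets $\nu\big((S\cdot A)\smallsetminus A\big)\geq\alpha\,\nu(C)=\alpha\big(\nu(X)-\nu(A)-\nu((S\cdot A)\smallsetminus A)\big)$, hence $\nu\big((S\cdot A)\smallsetminus A\big)\geq\tfrac{\alpha}{1+\alpha}\big(\nu(X)-\nu(A)\big)\geq\tfrac{\alpha}{(1+\alpha)Q}\nu(A)$, and then your first step finishes the proof with $\varepsilon=\min\{\alpha/Q,\ \alpha/((1+\alpha)Q^2)\}$. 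A secondary point: even granting your identity, your constant in the second regime degrades by a factor $\abs{S}$, whereas the lemma asserts $\varepsilon$ depends only on $\alpha$ and $Q$; the paper's argument avoids this dependence.
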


\begin{proof}
 Let $W$ be any finite set of vertices of $\CG_\rho(\CP)$ with $\oldabs{W}\leq 
\abs{\CG_\rho(\CP)}/2$ and consider the measurable set 
\[
 A\coloneqq \smashoperator[r]{\coprod_{R_i\in W}} R_i.
\]
Up to measure zero sets we have 
\[
 S\cdot A \subseteq \bigcup\Big\{R_i\Bigmid \nu\big((S\cdot A)\cap R_i\big)>0\Big\}
    = \smashoperator[r]{\coprod_{W\cup\partial W}}R_i \eqqcolon B.
\]
If $\nu(A)\leq \nu(X)/2$ then
\[
 \nu(B)\geq (1+\alpha)\nu(A)
\]
and since $\CP$ has bounded ratios we conclude
\[
 Q\abs{\vphantom{\big|}\partial W}\left(\inf_{R\in\CP}\nu(R)\right)
    \geq \nu(B\smallsetminus A)\geq \alpha\nu(A)
  \geq \alpha\abs{\vphantom{\big|}W}\left(\inf_{R\in\CP}\nu(R)\right);
\]
whence
\[
 \frac{\abs{\partial W}}{\abs{W}}\geq\frac{\alpha}{Q}.
\]

On the other hand, if $\nu(A)> \nu(X)/2$ let $C\coloneqq X\smallsetminus (S\cdot A)$ and 
notice that $(S\cdot C)\smallsetminus C\subseteq (S\cdot A)\smallsetminus A$ because 
$S=S^{-1}$.
Then we have:
\[
 \nu\big((S\cdot A)\smallsetminus A \big)
  \geq \alpha \nu(C)=\alpha\Big(\nu(X)-\nu(A)-\nu\big((S\cdot A)\smallsetminus A 
\big)\Big)
\]
whence
\[
 \nu(B\smallsetminus A)
  \geq \nu\big((S\cdot A)\smallsetminus A \big)
  \geq \frac{\alpha}{1+\alpha}\big(\nu(X)-\nu(A)\big).
\]

Since $\abs{W}\leq \abs{\CP}/2$, by the bound on measure ratios we get
\[
 \nu\big(X\smallsetminus A\big)\geq \frac{1}{Q}\nu(A).
\]
Using the same argument as above and combining the inequalities so obtained we conclude 
that
\[
 \frac{\abs{\partial W}}{\abs{W}}\geq 
    \min\left\{ \frac{\alpha}{Q},
      \frac{\alpha}{(1+\alpha)Q^2} \right\}
\]
as desired.
\end{proof}

\begin{cor}
 Let $(X,\nu)$ be a probability space and $\Gamma\curvearrowright (X,\nu)$ an action 
expanding in measure. Assume we are given a sequence of finite measurable partitions 
$\CP_n$ with $\abs{\CP_n}\to\infty$ and measure ratios uniformly bounded by the same 
constant $Q$.
Then the sequence of approximating graphs $\CG(\CP_n)$ forms a family of expanders if and 
only if they have uniformly bounded degree.
\end{cor}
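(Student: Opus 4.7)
The proof is essentially an immediate combination of the definition of a family of expanders with Lemma \ref{lem:meas.exp___cheeg.const}, so my plan is short.

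The forward direction is built into Definition \ref{de:expander.graphs}: a family of expanders is by definition a sequence of graphs with uniformly bounded degree (and growing cardinality, and Cheeger constant bounded away from zero). Hence if $\CG(\CP_n)$ forms a family of expanders, their degrees are automatically uniformly bounded. No work is required here beyond citing the definition.

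For the reverse direction, I would assemble the three requirements of Definition \ref{de:expander.graphs} one at a time. First, the hypothesis $\abs{\CP_n}\to\infty$ means that $\abs{\vertex(\CG(\CP_n))} = \abs{\CP_n}\to\infty$. Second, the uniform bound on degrees is exactly the standing assumption of this direction. Third, since the action $\Gamma\curvearrowright(X,\nu)$ is expanding in measure, say $\alpha$\=/expanding, and since each $\CP_n$ has measure ratios bounded by the common constant $Q$, Lemma \ref{lem:meas.exp___cheeg.const} applies uniformly to every $\CP_n$ and yields
\[
 h\bigl(\CG(\CP_n)\bigr)\geq \varepsilon(\alpha,Q)>0
\]
with the same constant $\varepsilon$ independent of $n$. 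Combining these three facts gives exactly that $\CG(\CP_n)$ is a family of expanders in the sense of Definition \ref{de:expander.graphs}.

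There is no genuine obstacle here: the lemma already does the substantive work of converting measure expansion into a lower bound on the Cheeger constant, and the corollary is just the packaging of this fact in the finite probability\=/space setting. The only mild point to mention explicitly is that, since $(X,\nu)$ has finite total measure and the partitions are finite, the condition $\abs{W}\leq \abs{\vertex(\CG(\CP_n))}/2$ in the definition of the Cheeger constant is exactly the condition $\nu(A)\leq \nu(X)/2$ (up to the factor $Q$) used in the proof of Lemma \ref{lem:meas.exp___cheeg.const}, so both cases of that lemma feed directly into the statement.
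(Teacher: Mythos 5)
Your proof is correct and is exactly the argument the paper intends: the paper gives no separate proof of this corollary precisely because it follows immediately from Definition \ref{de:expander.graphs} (for the forward direction) together with Lemma \ref{lem:meas.exp___cheeg.const} applied uniformly in $n$ (for the reverse direction). Nothing is missing.
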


In certain situations, considering finer and finer measurable partitions on the same 
dynamical system $\Gamma\curvearrowright (X,\nu)$ can yield a converse to Lemma 
\ref{lem:meas.exp___cheeg.const}.
Recall that a measure $\nu$ on the Borel $\sigma$\=/algebra of a Hausdorff topological 
space is a \emph{Radon measure} if it is locally finite (every point has a neighbourhood 
of finite measure) and inner regular (the measure of every measurable set $A\subseteq X$ 
is equal to the supremum of the measures of its compact subsets $K\subseteq A$).

Let $X$ be also a metric space and recall that the diameter of a subset $A\subseteq X$ is defined as $\diam(A)\coloneqq\sup\{d(x,y)\mid x,y\in A\}$. We can prove the following:

\begin{prop}
\label{prop:cheeg.const___expanding}
Let $(X,d,\nu)$ be a locally compact metric space with a Radon measure thereof and let $\CP_n$ be a sequence of measurable partitions of $X$ with uniformly bounded measure ratios. Assume that for every compact set $K\subseteq X$ there is a decreasing sequence of positive numbers $(r_{K,n})_{n\in\NN}$ such that $r_{K,n}\to 0$ and $\nu\bigparen{{E}_{K,n} }\to 0$, where
\[
 E_{K,n}\coloneqq \bigcup\big\{R\bigmid R\in\CP_n,\ \nu({R}\cap K)>0,\ \diam(R)>r_{K,n}\big\}.
\]
Then, for any continuous action $\Gamma\curvearrowright X$ the existence of a uniform positive Cheeger constant $\epsilon>0$ for the approximating graphs $\CG\paren{\CP_n}$ implies that the action is expanding in measure.
\end{prop}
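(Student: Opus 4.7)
I argue by contrapositive: suppose the action is not expanding in measure. Then for every $\alpha>0$ there exists a measurable set $A\subseteq X$ of finite measure with $\nu(A)\leq\nu(X)/2$ and $\nu(S\cdot A)<(1+\alpha)\nu(A)$. The goal is to exhibit, for every sufficiently large $n$, a vertex subset $W_n\subseteq\CP_n$ with $|W_n|\leq|\vertex(\CG(\CP_n))|/2$ and $|\partial W_n|/|W_n|<\epsilon$, contradicting the uniform Cheeger hypothesis.

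The construction rests on two approximations. By inner regularity pick a compact $K\subseteq A$ with $\nu(A\setminus K)$ arbitrarily small; the set $K'\coloneqq S\cdot K$ is compact by continuity of the finitely many maps $s\in S$, and $\nu(K')\leq\nu(S\cdot A)<(1+\alpha)\nu(A)$. Pick also a compact neighbourhood $K''$ of $K'$ with $\nu(K'')$ only slightly larger than $\nu(K')$ (available because $\nu$ is Radon and locally finite). Then define
\[
 W_n\coloneqq\bigbraces{R\in\CP_n\bigmid\nu(R\cap K)>0,\ \diam(R)\leq r_{K'',n}},
\]
the collection of small-diameter regions of $\CP_n$ meeting $K$.

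The central step is to track the measures of $\bigcup W_n$ and $\bigcup\partial W_n$. Up to null sets, $\bigcup W_n$ is sandwiched between $K\setminus E_{K'',n}$ and the $r_{K'',n}$-neighbourhood of $K$, so $\nu(\bigcup W_n)\to\nu(K)$ by the hypothesis on $E_{K'',n}$ together with outer regularity. For $\partial W_n$: any $R'\in\partial W_n$ arises from some $R\in W_n$ and $s\in S$ with $\nu((s\cdot R)\cap R')>0$; since $R\subseteq K^{r_{K'',n}}$ and $s$ is continuous, uniform continuity on a compact set yields $s\cdot R\subseteq(K')^{\eta_n}$ with $\eta_n\to 0$, so for large $n$ each such $R'$ meets $K''$ with positive measure. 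Splitting $\partial W_n$ by whether $\diam(R')\leq r_{K'',n}$ or not, the large-diameter regions contribute total measure at most $\nu(E_{K'',n})\to 0$, while the small-diameter ones lie in the $r_{K'',n}$-neighbourhood of $K''$. Consequently $\nu(\bigcup(W_n\cup\partial W_n))\to\nu(K'')$, hence $\nu(\bigcup\partial W_n)\leq\nu(K'')-\nu(K)+o(1)$ can be made arbitrarily close to $\alpha\nu(A)$.

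The bound $Q$ on measure ratios then translates the measure estimate into a counting one: since every region has measure in $[\nu_n,Q\nu_n]$ for a common $\nu_n$, the ratio $|\partial W_n|/|W_n|$ is at most $Q\cdot\nu(\bigcup\partial W_n)/\nu(\bigcup W_n)$, which is eventually below $2Q\alpha$. Choosing $\alpha<\epsilon/(2Q)$ at the outset yields the contradiction. The main subtle point is ensuring $|W_n|\leq|\CP_n|/2$ in the finite-measure case: bounded measure ratios alone only give $|W_n|\leq Q|\CP_n|/2$, and the extra factor $Q$ must be absorbed, either by arranging the non-expanding set $A$ to have measure at most $\nu(X)/(2Q)$ (automatic when $\nu(X)=\infty$) or by replacing $W_n$ with the complementary vertex set and suitably adjusting the boundary estimate.
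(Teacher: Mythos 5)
Your argument is correct and is essentially the paper's proof run in the contrapositive direction: the same reduction to compact sets via inner regularity, the same use of $E_{K,n}$ and shrinking neighbourhoods to sandwich $\nu(\bigcup W_n)$ between $\nu(K)$ and $\nu(N_r(K))$, the same conversion of measure estimates into vertex counts via the bound $Q$, and the same complementation trick for the half-measure issue. One caution on your final remark: of the two patches you offer for the case $\abs{W_n}>\abs{\CP_n}/2$, the first (arranging $\nu(A)\leq \nu(X)/(2Q)$) is not available --- failure of expansion only produces bad sets of measure up to $\nu(X)/2$, and nothing forces a bad set of measure below $\nu(X)/(2Q)$ to exist --- so you must use the second patch, passing to $V_n\coloneqq \CP_n\smallsetminus(W_n\cup\partial W_n)$ and noting $\partial V_n\subseteq \partial W_n$ together with $\abs{W_n\smallsetminus\text{(everything)}}$-type counting, which is exactly the complementation argument the paper carries out.
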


\begin{proof}
%  The idea of the proof is that the isoperimetric information on the approximating graphs translates well to subsets of $X$ that can be described as union of regions in $\CP_n$. To use this information we thus need to show that every subset of $X$ can be suitably approximated as a union of regions.
%  
 Since $\nu$ is a Radon measure, it is enough to prove that there is a positive constant $\alpha>0$ so that for every compact set $K\subset X$ with finite measure and $\nu(K)\leq\nu(X)/2$ we have $\nu(S\cdot K)\geq(1+\alpha)\nu(K)$. 
 
Fix such a compact set $K$ and an appropriate sequence $r_{K,n}\to 0$ and note that by hypothesis we have: 
\[
 \nu(K)
  =\lim_{n\to\infty}\bigbrack{ \nu\big(K\smallsetminus {E}_{K,n}\big)
      +\nu\big(K\cap {E}_{K,n}\big)}
  =\lim_{n\to\infty}\nu\big(K\smallsetminus {E}_{K,n}\big).
\]

For any set $A\subseteq X$ we will denote by $\vertex_n(A)\subseteq \CP_n$ the set of cells in $\CP_n$ that intersect $A$ with positive measure $
 \vertex_n(A)\coloneqq \bigbraces{R\in\CP_n\bigmid \nu({R}\cap A)>0 }$
and denote by $\CN_n[A]$ their union $
 \CN_n[A]\coloneqq{\bigcup_{R\in \vertex_n(A)}}R \subseteq X$.

Since $E_{K,n}$ is a union of regions, for every $A\subseteq K$ we have that $\CN_n\brackets{A\smallsetminus {E}_{K,n}}= \CN_n\brackets{A}\smallsetminus E_{K,n}$. Moreover, since $A\subseteq K$, if $R$ is a region contained in $\CN_n\brackets{A}\smallsetminus E_{K,n}$ then $R$ is contained in the neighbourhood ${N}_{r_{K,n}}(A)$. We thus obtain: 
\begin{equation}
 \label{eq:CN.subset.small.nbhd}
 {\CN}_n\bigbrack{K \smallsetminus {E}_{K,n}}
 = { \CN_n\brackets{K} \smallsetminus E_{K,n} }\subseteq {{N}_{r_{K,n}}(K)\smallsetminus E_{K,n} }.
\end{equation}

For every $A\subseteq X$, we have $A\subseteq\CN_n\brackets{A}$ up to measure $0$ sets, we obtain:
\[
  \nu\big(K\smallsetminus {E}_{K,n}\big)
 \leq \nu\bigparen{
    \CN_n \bigbrack{K\smallsetminus {E}_{K,n}) } } 
 \leq\nu\big({N}_{r_{K,n}}(K)\big),
\]
and since both the first and the last expression tend to $\nu(K)$ we deduce that there exist the limit
\begin{equation}
 \label{eq:appgraph:measure.K.equal.limit}
 \lim_{n\to\infty}\nu\bigparen{\CN_n \bigbrack{K\smallsetminus {E}_{K,n} }}
 =\nu(K).
\end{equation}

\

Equation~\eqref{eq:appgraph:measure.K.equal.limit} allows us to express $\nu(K)$ as a limit of measures of finite unions of regions of $\CP_n$. We now need to find a similar estimate for $S\cdot K$. 
For $n$ large enough the closure of the neighbourhood ${N}_{r_{K,n}}\bigpar{K}$ is compact and hence for every $s\in S$ the restriction of $s$ to said compact neighbourhood is a uniformly continuous map. It follows that there is an infinitesimal decreasing sequence $r'_{K,n}\to 0$ such that $s\cdot {N}_{r_{K,n}}(K)\subseteq {N}_{r'_{K,n}}\bigpar{s\cdot K}$ for every $s\in S$ and hence
\begin{equation}
\label{eq:S.nbhd.contained.big.nbhd}
 S\cdot {N}_{r_{K,n}}\paren{K}
 \subseteq {N}_{r'_{K,n}}\bigpar{S\cdot K}.
\end{equation}

We can also fix a $n_0$ large enough so that the set $ C\coloneqq{N}_{r'_{K,n_0}}\bigpar{S\cdot K}$
is compact. In particular, we obtain a new infinitesimal sequence $r_{C,n}$ from the hypothesis and, as for \eqref{eq:CN.subset.small.nbhd}, we have
\begin{equation}
 \label{eq:CN.subset.small.nbhd.setC}
 \CN_n\brackets{A} \smallsetminus E_{C,n} 
 \subseteq {N}_{r_{C,n}}(A)\smallsetminus E_{C,n}
 \subseteq {N}_{r_{C,n}}(A)
\end{equation}
for every $A\subseteq C$. 
 For $n\geq n_0$, applying \eqref{eq:CN.subset.small.nbhd}, \eqref{eq:S.nbhd.contained.big.nbhd} and \eqref{eq:CN.subset.small.nbhd.setC}, we obtain a chain of containments:
 \begin{align*}
  \CN_n\bigbrack{S\cdot    
    {\CN}_n\bigbrack{K\smallsetminus{E}_{K,n}}}
%   &\subseteq \CN_n\bigbrack{S\cdot \bigparen{
%     { {N}_{r_{K,n}}(K)\smallsetminus E_{K,n} } } } \\
  &\subseteq \CN_n\bigbrack{S\cdot {N}_{r_{K,n}}(K)} \\
  &\subseteq 
    \CN_n\bigbrack{{N}_{r'_{K,n}}\bigpar{S\cdot K}}\\
  &\subseteq 
    \Bigparen{ \CN_n\bigbrack{{N}_{r'_{K,n}}\bigpar{S\cdot K}}\smallsetminus E_{C,n} } \cup E_{C,n}\\
  &\subseteq 
    {N}_{r_{C,n}+r'_{K,n}}\bigpar{S\cdot K} \cup E_{C,n}.
 \end{align*}
 Since the measure of the last term converges to $\nu\bigparen{S\cdot K}$, we deduce
 \[
  \nu\bigparen{S\cdot K}\geq 
    \limsup_{n\to\infty} \nu\Bigparen{\CN_n\bigbrack{S\cdot {\CN}_n\bigbrack{K\smallsetminus{E}_{K,n}}} }
 \]
and together with \eqref{eq:appgraph:measure.K.equal.limit} this yields:
\begin{equation}\label{eq:appgraph:ratio.measures}
 \frac{\nu(S\cdot K)}{\nu(K)}
  \geq\limsup_{n\to\infty}
    \frac{\nu\Big(\CN_n\bigbrack{S\cdot {\CN}_n\bigbrack{K\smallsetminus{E}_{K,n}}}\Big)}
	  {\nu\Big(\CN_n\bigbrack{K\smallsetminus {E}_{K,n} }\Big)}.
\end{equation}

\

By hypothesis, the partitions $\CP_n$ have uniformly bounded measure ratios. That is, there exists a constant $Q$ such that for any $n$ and any $R,R'\in \CP_n$ we have $\nu(R)\leq Q\nu(R')$. It follows that for every pair of sets $A,B\subseteq X$ we have an estimate
\[
 \frac{\nu\bigparen{ \CN_n[A] }}{\nu\bigparen{\CN_n[B] }}
 \geq \frac{\abs{\vertex_n(A)}}{\abs{\vertex_n(B)}} Q^{-1}. 
\]

Now, the key point of the proof is that for any subset $A\subseteq X$ we have that $ \vertex_n\bigparen{S\cdot {\CN}_n[A]}$ coincides with $\vertex_n(A)\sqcup\partial\vertex_n(A)$. Thus we get 
\[
 \frac{\nu\big(\CN_n\bigbrack{S\cdot {\CN}_n[A]}\big)}{\nu\big(\CN_n[A]\big)}
  =1+\frac{\nu\Big(\CN_n[S\cdot {\CN}_n[A]]\smallsetminus\CN_n[A]\Big)}{\nu\big(\CN_n[A]\big)}
  \geq 1+ \frac{\abs{\vphantom{\big|}\partial\vertex_n(A)}}
		{\abs{\vertex_n(A)}}Q^{-1}
\]
and if we apply this inequality to the sets $A_n\coloneqq K\smallsetminus {E}_{K,n}$, inequality \eqref{eq:appgraph:ratio.measures} becomes
\[
  \frac{\nu(S\cdot K)}{\nu(K)}
  \geq 1+ \limsup_{n\to\infty}
    \frac{\abs{\vphantom{\big|}\partial\vertex_n(A_n\big)}}
		{\abs{\vertex_n(A_n)}}Q^{-1}.
\]
It is hence enough to find a uniform bound $\alpha>0$ such that
\[
 \limsup_{n\to\infty}\frac{\abs{\vphantom{\big|}\partial\vertex_n(A_n)}}
		{\abs{\vertex_n(A_n)}}\geq \alpha.
\]

Note that for large $n$ the set ${\vertex_n(A_n)}$ si finite because $\CN[A_n]\subseteq N_{r_{K,n}}(K)$ has finite measure. If $\abs{\vertex_n(A_n)}$ is less than or equal to $\abs{\vertex_n(X)}/2$ then $
 \frac{\abs{\partial\vertex_n(A_n)}}
		{\abs{\vertex_n(A_n)}}\geq \epsilon$
by definition of Cheeger constant. If this is not the case, we need to use an argument similar to that of Lemma \ref{lem:meas.exp___cheeg.const}. That is, denote by $W_n$ the complement set $W_n\coloneqq\vertex_n(X)\smallsetminus\vertex_n(A_n)$ and notice that $\partial\vertex_n(A_n)
    =\partial_{\text{int}}(W_n)
    \supseteq\partial\big(W_n\smallsetminus\partial_{\text{int}}(W_n)\big)$,
where $\partial_{\rm int} (W_n)$ denotes the interior vertex boundary, \emph{i.e} the set of vertices of $W_n$ which are endpoints of edges with one endpoint in the complement $\vertex_n(X)\smallsetminus W_n$. It follows that $
 \abs{\partial\vertex_n(A_n)}
    \geq \epsilon\abs{\vphantom{\big(}W_n\smallsetminus\partial_{\text{int}}(W_n)}
    =\epsilon \big(\abs{ W_n}-\abs{\partial\vertex_n(A_n)}\big)$.

Since $\lim_{n\to \infty}\nu\big(\CN_n(A_n)\big)=\nu(K)\leq \frac{1}{2}\nu(X)$, we can once more use the bound on measure ratios to get that $\abs{\vertex_n(A_n)}\leq Q\abs{W_n}$ for $n$ large enough. Thus we have
\begin{align*}
 \limsup_{n\to\infty}\frac{\abs{\partial\vertex_n(A_n)}}{\abs{\vertex_n(A_n)}}
    &\geq \limsup_{n\to\infty}\Bigpar{ \frac{1}{\abs{\vertex_n(A_n)}}
	\min\left\{\epsilon\abs{\vertex_n(A_n)}\mathrel{,} \frac{\epsilon}{1+\epsilon}\abs{ W_n} \right\} }\\
    &\geq \limsup_{n\to\infty} \min\left\{\epsilon\mathrel{,} \frac{\epsilon}{Q(1+\epsilon)}\right\} 
	=\min\left\{\epsilon\mathrel{,} \frac{\epsilon}{Q(1+\epsilon)}\right\} 
\end{align*}
as desired.
\end{proof}

If $\CP$ is a partition of a metric space, its \emph{mesh} is is the supremum of the diameters of its regions:
\[
 {\rm mesh}(\CP)\coloneqq \sup\{\diam(R)\mid R\in\CP\}.
\]
Proposition \ref{prop:cheeg.const___expanding} immediately implies the following:

\begin{cor}\label{cor:cheeg.const.and.mesh___expanding}
 Given a continuous action $\Gamma\curvearrowright X$ on a locally compact metric space equipped with a Radon measure and a sequence of measurable partitions $\CP_n$ with uniformly bounded measure ratios and ${\rm mesh}(\CP_n)\to 0$; if the approximating graphs $\CG(\CP_n)$ have Cheeger constant uniformly bounded away from $0$ then the action is expanding in measure.
\end{cor}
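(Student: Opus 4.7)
The strategy is simply to observe that the hypothesis ${\rm mesh}(\CP_n)\to 0$ is a strictly stronger version of the technical condition required by Proposition \ref{prop:cheeg.const___expanding}; once this is checked, the corollary follows immediately.

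First, given an arbitrary compact set $K\subseteq X$, I would exhibit a decreasing sequence $r_{K,n}\to 0$ with $r_{K,n}\geq {\rm mesh}(\CP_n)$ for all sufficiently large $n$. Since ${\rm mesh}(\CP_n)\to 0$ we may set, for $n$ large,
\[
 r_{K,n}\coloneqq \sup_{m\geq n}{\rm mesh}(\CP_m)+\frac{1}{n},
\]
which is finite (eventually), non\=/increasing in $n$, and tends to $0$. Note that the choice of $r_{K,n}$ does not actually depend on $K$; any sequence majorising the mesh and decreasing to $0$ works.

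Next, observe that, by the very definition of mesh, every region $R\in\CP_n$ satisfies $\diam(R)\leq {\rm mesh}(\CP_n)\leq r_{K,n}$. Therefore no region contributes to
\[
 E_{K,n}= \bigcup\big\{R\mid R\in\CP_n,\ \nu(R\cap K)>0,\ \diam(R)>r_{K,n}\big\},
\]
so $E_{K,n}=\emptyset$ and in particular $\nu(E_{K,n})=0$ for all $n$ large enough. The hypothesis of Proposition \ref{prop:cheeg.const___expanding} is thus verified for every compact $K$, and applying the proposition directly gives that the action is expanding in measure.

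There is essentially no obstacle: the entire content of the corollary is packaging the observation that a uniform metric control on region diameters trivially produces the required controlling sequence $r_{K,n}$, making the ``bad'' set $E_{K,n}$ empty rather than merely small in measure.
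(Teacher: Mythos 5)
Your argument is correct and is exactly the reduction the paper intends: the paper gives no separate proof, stating only that Proposition \ref{prop:cheeg.const___expanding} "immediately implies" the corollary, and your choice of $r_{K,n}$ majorising the mesh (so that $E_{K,n}=\emptyset$) is the obvious way to verify its hypothesis. The minor caveat about finiteness of the supremum for small $n$ is harmless since only the tail of the sequence matters.
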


\begin{rmk}
 It is easier to prove Corollary \ref{cor:cheeg.const.and.mesh___expanding} directly rather than proving Proposition \ref{prop:cheeg.const___expanding}, but we decided to provide a more general statement that could be applied \emph{e.g.} to metric spaces with cusps as well.
\end{rmk}

We can combine Proposition \ref{prop:cheeg.const___expanding} with Lemma 
\ref{lem:meas.exp___cheeg.const} to prove Theorem 
\ref{thm:intro:approx.exp_iff_meas.exp}. Specifically, we have the following: 

\begin{thm}
\label{thm:meas.exp_iff_unif.Cheeger.const}
 Let $\Gamma\curvearrowright (X,d,\nu)$ be a continuous action on a locally compact space equipped with a Radon measure and $\CP_n$ a family of measurable partitions of $X$ with uniformly bounded measure ratios.
Assume that for every compact set $K\subseteq X$ there is a decreasing sequence of positive numbers $(r_{K,n})_{n\in\NN}$ such that $r_{K,n}\to 0$ and $\nu\bigparen{{E}_{K,n} }\to 0$, where
\[
 E_{K,n}\coloneqq \bigcup\big\{R\bigmid R\in\CP_n,\ \nu({R}\cap K)>0,\ \diam(R)>r_{K,n}\big\}.
\]
Then, the action is expanding in measure if and only 
if all the approximating graphs $\CG(\CP_n)$ share a common lower bound on their Cheeger constant.
\end{thm}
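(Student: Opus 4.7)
The theorem is essentially the formal combination of the two results established earlier in the section, so the plan is simply to assemble them.

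For the direction ``expanding in measure implies a uniform Cheeger lower bound'', I would invoke Lemma \ref{lem:meas.exp___cheeg.const} directly. The hypothesis of the theorem already provides a constant $Q$ bounding the measure ratios of every $\CP_n$ uniformly, and the action is assumed $\alpha$-expanding for some $\alpha>0$. The lemma then yields a Cheeger constant $\varepsilon(\alpha,Q)>0$ depending only on $\alpha$ and $Q$, which is the same for all $\CG(\CP_n)$. The technical hypothesis on the sequences $r_{K,n}$ and the sets $E_{K,n}$ is not needed for this implication, so this half is immediate.

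For the converse direction, the hypotheses of the theorem are designed to match exactly those of Proposition \ref{prop:cheeg.const___expanding}: a continuous action on a locally compact metric space equipped with a Radon measure, measurable partitions $\CP_n$ with uniformly bounded measure ratios, and for every compact $K\subseteq X$ an infinitesimal sequence $r_{K,n}\to 0$ with $\nu(E_{K,n})\to 0$. Applying the proposition to the common lower bound $\epsilon>0$ on the Cheeger constants of $\CG(\CP_n)$ then yields an expansion constant $\alpha>0$ depending only on $\epsilon$ and $Q$.

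There is really no obstacle here, since both implications have already been done in full generality; the only role of the theorem statement is to package them together under a single hypothesis. I would simply write a two-line proof citing Lemma \ref{lem:meas.exp___cheeg.const} for one direction and Proposition \ref{prop:cheeg.const___expanding} for the other, and note that the constants in both directions are explicit and depend only on $\alpha$, $Q$, and $\epsilon$.
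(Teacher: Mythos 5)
Your proposal is correct and is exactly how the paper proves this theorem: the statement is presented as the immediate combination of Lemma \ref{lem:meas.exp___cheeg.const} (expansion plus uniformly bounded measure ratios gives a uniform Cheeger bound $\varepsilon(\alpha,Q)$) with Proposition \ref{prop:cheeg.const___expanding} (whose hypotheses are word-for-word those of the theorem and which supplies the converse). Nothing further is needed.
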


\begin{rmk}
 Notice that if the approximating graphs are already known to share a uniform bound 
on their degrees, then one can modify the proofs of Lemma 
\ref{lem:meas.exp___cheeg.const} and Proposition \ref{prop:cheeg.const___expanding} in 
order to extend them to measurable 
actions of semigroups.
\end{rmk}

\section{Metric bounds on degrees}
\label{sec:metric.bounds.on.degrees}

Using the tools of Section \ref{sec:meas.exp.and.Cheeger.const} we can now build families 
of graphs with uniform lower bounds on their Cheeger constants. Still, to build examples 
of expanders we also need to find a way to bound the degrees of the 
approximating graphs. One way of doing it is by using additional metric structures.

Let $(X,d,\nu)$ be a metric space with a Borel measure $\nu$. Such measure is 
\emph{doubling} if there exists a constant $D$ such that 
\[
 \nu\big( B_x(2r)\big)\leq D\nu\big(B_x(r)\big)
\]
for every $x\in X$ and every radius $r>0$.

For any bounded subset $A\subset (X,d)$ we will define its \emph{eccentricity} as 
\[
 \xi(A)\coloneqq\inf\left\{\frac{R}{r} \, \middle |  \,
    \exists x \in X,\ B_x(r)\subseteq A\subseteq B_x(R)\right\}.
\]

\

Recall that, given an increasing homeomorphism $\eta\colon [0,+\infty)\to 
[0,+\infty)$, a homeomorphism $f\colon X\to X$ is \emph{$\eta$\=/quasi\=/symmetric} 
if 
\[
 \frac{d\big(f(x),f(y)\big)}{d\big(f(x),f(z)\big)}
 \leq \eta\left(\frac{d(x,y)}{d(x,z)}\right)
\]
for every choice of points $z\neq x\neq y$ in $X$. We say that $f$ is 
\emph{quasi\=/symmetric} if it is $\eta$\=/quasi\=/symmetric for some 
$\eta\colon[0,\infty)\to[0,\infty)$.

We will also say that a measurable map $f\colon (X,\nu)\to(X,\nu)$ has \emph{measure 
distortion} bounded by $\Theta\geq 1$ if 
\[
 \frac{1}{\Theta}\nu(A)\leq\nu\big(f(A)\big)\leq \Theta \nu(A)
\]
 for every measurable set $A\subseteq X$.
 
An action $\rho\colon G\curvearrowright (X,d)$ is quasi\=/symmetric (respectively, has 
bounded measure distortion) if $\rho(g)$ is a quasi\=/symmetric homeomorphism 
(respectively, has bounded 
measure distortion) for every $g\in G$. We do not require the bounds to be uniform.
 
\begin{prop}
\label{prop:nice.action.on.doubling___bded.degree}
 Let $(X,d,\nu)$ be a doubling measure space and $\CP$ be a measurable partition with 
measure ratios bounded by $Q\geq 1$ and such that all the regions $R\in \CP$ are bounded 
and have uniformly bounded eccentricity $\xi(R)\leq \xi$ for some constant $\xi>0$.

Given a quasi\=/symmetric action $\rho\colon\Gamma\curvearrowright (X,d,\nu)$ with 
bounded measure distortion,  
let $\eta\colon[0,\infty) \to[0,\infty)$ and $\Theta\geq 1$ be such that for every 
generator $s\in S$, the homeomorphism $\rho(s)$ is $\eta$\=/quasi\=/symmetric and has 
measure distortion bounded by $\Theta$.

Then the approximating graph $\CG(\CP)$ has bounded degree and this bound depends only on 
$\eta,\Theta,Q,\xi$ and the doubling constant $D$.
\end{prop}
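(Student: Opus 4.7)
The plan is to show uniform boundedness of the degree of any vertex $R \in \CP$ in $\CG(\CP)$. Since $S$ is finite, it suffices to bound, for each fixed $s \in S$, the size of the set $\CR_s \coloneqq \{R' \in \CP \mid \nu(R' \cap \rho(s)\cdot R) > 0\}$. Throughout, I fix $x_R$ and $r_R$ witnessing the eccentricity bound $B_{x_R}(r_R) \subseteq R \subseteq B_{x_R}(\xi r_R)$, set $y \coloneqq \rho(s)(x_R)$, and for each $R' \in \CR_s$ fix analogous $x_{R'}, r_{R'}$.

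The first step is to show that $\rho(s)\cdot R$ is still ``round''. Namely, I want to produce $\tilde r > 0$ and a constant $\tilde \xi = \tilde\xi(\eta, \xi)$ with $B_y(\tilde r) \subseteq \rho(s)\cdot R \subseteq B_y(\tilde \xi \tilde r)$. The outer inclusion will follow by applying the $\eta$-quasi-symmetric inequality at $x_R$, varying $p$ over $R$ and comparing with a fixed $z \in B_{x_R}(r_R)$ whose distance from $x_R$ is comparable to $r_R$. For the inner inclusion I use the same idea for $\rho(s)^{-1}$, which is $\tilde\eta$-quasi-symmetric for a $\tilde\eta$ depending only on $\eta$. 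Combining this with the measure distortion bound $\Theta$, the bounded measure ratios $Q$, and the doubling property of $\nu$, one sees that $\nu(R)$, $\nu(R')$, $\nu(\rho(s)\cdot R)$, $\nu(B_y(\tilde r))$ and $\nu(B_{x_{R'}}(r_{R'}))$ are pairwise comparable, with constants depending only on $\eta, \Theta, Q, \xi$ and $D$. In particular, the sub-balls $B_{x_{R'}}(r_{R'})$ for $R' \in \CR_s$ are disjoint and each has measure at least $c\,\nu(R)$ for a uniform $c > 0$.

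For the counting step, each $R' \in \CR_s$ meets $\rho(s)\cdot R \subseteq B_y(\tilde\xi \tilde r)$, so the triangle inequality gives $d(x_{R'}, y) \leq \xi r_{R'} + \tilde\xi \tilde r$ and hence $B_{x_{R'}}(r_{R'}) \subseteq B_y((\xi + 1 + \tilde\xi) r^*)$ with $r^* \coloneqq \max(r_{R'}, \tilde r)$. When $r^* = \tilde r$, doubling directly bounds $\nu(B_y((\xi + 1 + \tilde\xi)\tilde r))$ in terms of $\nu(B_y(\tilde r)) \asymp \nu(R)$. When $r^* = r_{R'} > \tilde r$, the estimate $d(x_{R'}, y) \leq (\xi + \tilde\xi) r_{R'}$ places $y$ at scale $r_{R'}$ from $x_{R'}$, so doubling centred at $x_{R'}$ yields $\nu(B_y((\xi + 1 + \tilde\xi) r_{R'})) \lesssim \nu(B_{x_{R'}}(r_{R'})) \leq Q \nu(R)$. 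Either way the enclosing ball has measure at most $K \nu(R)$ for a uniform $K$; since the disjoint sub-balls each have measure at least $c \nu(R)$ and the enclosing ball corresponding to the largest $r^*$ contains all of them, comparing measures yields $\abs{\CR_s} \leq K/c$, independent of $R$.

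The main obstacle is the case $r_{R'} \gg \tilde r$, where $R'$ extends far beyond $\rho(s)\cdot R$ and a naive ``contained in a fixed ball around $y$'' argument is unavailable. The resolution is that the measure ratio bound still forces $\nu(R') \leq Q\,\nu(R)$, and, since $y$ sits at scale $r_{R'}$ from $x_{R'}$, applying doubling centred at $x_{R'}$ transfers this measure bound to balls centred at $y$, keeping the number of such ``large'' regions in $\CR_s$ uniformly controlled.
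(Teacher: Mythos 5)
Your proof is correct and rests on the same ingredients as the paper's (quasi\-/symmetry to control the eccentricity of $s(R)$, doubling, and the eccentricity/measure\-/ratio/measure\-/distortion hypotheses to make all the relevant measures comparable to $\nu(R)$), but the counting step is organised differently. The paper derives two separate upper bounds on the number of neighbours of $R$: one of the form $QL_2(r_2/r_3)$, obtained by noting that if the neighbours spread out to radius $r_2+r_3$ around $s(R)$ then some neighbour $R_j$ has diameter at least $r_3$ and hence contains a ball of radius $r_3/2\xi$ against which everything is packed; and one of the form $Q\Theta L_1L_3(r_3/r_2)$, packing against $s(R)$ itself. Neither bound is uniform on its own, but their minimum is, via the final $\sup_{t>0}\min\{\cdot,\cdot\}$ step. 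You instead produce a single packing estimate: the disjoint inner balls $B_{x_{R'}}(r_{R'})$ each have measure at least $c\,\nu(R)$ and all sit inside one ball of measure at most $K\nu(R)$, where the bound on the big ball is obtained by recentring the doubling estimate at the centre of the largest neighbouring region whenever that region is larger than $s(R)$. This resolves the same obstacle (the scale mismatch between $s(R)$ and a possibly much larger neighbour) by a slightly different device, and it avoids the sup\-/min step, which is arguably cleaner; the paper's version keeps the two regimes more explicitly separated. One small point to tidy up: as in the paper, which introduces the finite subset $J'\subseteq J$ for exactly this reason, you should first run the argument on an arbitrary finite subset of $\CR_s$ so that the maximal $r^*$ is attained, and then conclude that $\abs{\CR_s}\leq K/c$; a priori $\CR_s$ could be infinite and ``the largest $r^*$'' need not exist.
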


\begin{proof}
 Fix any $s\in S$ and, for any region $R\in \CP$, let 
 \[
  J\coloneqq \big\{R_i\in\CP\bigmid R_i\cap s(R)\neq \emptyset\big\}.
 \]
It is enough to prove that there is a uniform bound on $\abs{J}$.

Since $s$ is $\eta$\=/quasi\=/symmetric, the image $s(R)$ has eccentricity at most 
$\eta(\xi)$. 
Thus there are $x\in X$ and $0< r_1\leq r_2$ with $r_2\leq \eta(\xi) r_1$ such that 
\[
 B_x(r_1)\subseteq s(R)\subseteq B_x(r_2).
\]
We can then bound the measure $\nu\big(B_x(r_2)\big)$ due to the doubling condition on $X$
\[
%  \nu\big(s(R)\big)\leq 
 \nu\big(B_x(r_2)\big)
  \leq D^{\left\lceil\log_2\left(r_2/r_1\right)\right\rceil}\nu\big(B_x(r_1)\big)
  \leq D^{\left\lceil\log_2\left(\eta(\xi)\right)\right\rceil}\nu\big(s(R)\big)
\]
where $\lceil t\rceil$ denotes the smallest integer $k\geq t$.
Letting $L_1=D^{\left\lceil\log_2\left(\eta(\xi)\right)\right\rceil}$ we get
\[
%  \frac{1}{\Theta}\nu(R)\leq 
 \nu\big(B_x(r_2)\big)
  \leq L_1 \Theta\nu(R)
\]
by bounded measure distortion.

Choose a finite subset $J'\subseteq J$ (a priori, $J$ could still be infinite at this point). Let $r\geq 0$ be the smallest real number such that $R_i\subseteq B_x(r_2+r)$ for every region
$R_i\in J'$. By definition, there exists a region $R_j$ intersecting $s(R)$ non trivially 
and having diameter $\diam(R_j)\geq r$. Thus there exists $y\in X$ with 
$B_y(r/2\xi)\subseteq R_j$.

Notice that
\[
 \nu\left( \coprod_{i\in J'}R_i\right)
 \leq \nu\big(B_y (2(r+r_2))\big)
 \leq D^{\left\lceil\log_2\left(\frac{2(r+r_2)2\xi}{r}\right)\right\rceil}
      \nu\big(B_y(r/\xi)\big).
\]
Letting $L_2(t)\coloneqq D^{\left\lceil\log_2\left(4\xi(1+t)\right)\right\rceil}$ yields
\begin{equation}\label{eq:bound.on.J'}
 \frac{\abs{J'}}{Q}\nu(R_j)
 \leq \nu\big(B_y(2r+r_2)\big)
 \leq L_2(r_2/r)\nu\big(B_y(r/\xi)\big)
 \leq L_2(r_2/r)\nu(R_j),
\end{equation}
where the RHS is set as $+\infty$ if $r=0$. As the RHS is a decreasing function of $r$, and $r$ increases as $J'\subset J$ does, we deduce that there exists a nummber $r_3\geq 0$ which is the smallest such that $R_i\subseteq B_x(r_2+r_3)$ for every region
$R_i\in J$. Inequality~\eqref{eq:bound.on.J'} then implies
\[
 \frac{\abs{J}}{Q}  \leq L_2\Big(\frac{r_2}{r_3}\Big).
\]

\

On the other hand we have
\[
 \nu\left( \coprod_{i\in J}R_i\right)
 \leq \nu\big(B_x(r_3+r_2)\big)
 \leq D^{\left\lceil\log_2\left(\frac{(r_3+r_2)}{r_2}\right)\right\rceil}
      \nu\big(B_x(r_2)\big),
\]
thus letting $L_3(t)=D^{\left\lceil\log_2\left(t+1\right)\right\rceil}$ we get
\[
 \frac{\abs{J}}{Q}\nu(R)
 \leq L_3(r_3/r_2)\nu\big(B_x(r_2)\big)
 \leq L_3(r_3/r_2)L_1\Theta\nu(R).
\]
Thus we conclude
\begin{align*}
  \abs{J} &\leq \min\left\{ \vphantom{\Big |} QL_2\big(r_2/r_3\big), 
    Q\Theta L_3\big(r_3/r_2\big)L_1\right\} \\
  &\leq \sup_{t>0} \left( 
    \min\left\{ QL_2\big(t\big), Q\Theta L_3\left(\frac{1}{t}\right)L_1\right\}\right)
\end{align*}
and the latter is bounded.

\end{proof}

Proposition \ref{prop:nice.action.on.doubling___bded.degree} is fairly general in that it 
deals with a large class of maps and measure spaces. Still, at times its hypotheses might 
be cumbersome to work with. It is especially so when one already knows that the 
action and 
the tessellation already satisfy much stronger hypotheses that make some of the 
requirements of Proposition \ref{prop:nice.action.on.doubling___bded.degree} redundant. 
For example, it is easy to prove the following:

\begin{rmk}
\label{rmk:Lipschitz.on.nice.part___bded.degree}
 Let $(X,d)$ be a metric space with a partition $\CP$ so that there exists a function 
$\zeta\colon [0,\infty)\to \NN$ such that for every point $x\in X$ and every radius $r$ 
the ball $B_x(r)$ intersects at most $\zeta(r)$ regions of $\CP$.

Then, if $f\colon X\to X$ is a $L$\=/Lipschitz map and $Y\subset X$ is any subset,
then $f(Y)$ intersect at most $\zeta\big(L\diam(Y)/2\big)$ regions of $\CP$.
\end{rmk}

\section{Voronoi tessellations}
\label{sec:voronoi.tesselations}

A convenient way for defining measurable partitions on a metric space $(X,d)$ is given by 
the Voronoi tessellations. 

\begin{de}
 Given a countable discrete subset $Y\subseteq X$
the associated \emph{Voronoi tessellation} is the family $\CV(Y)\coloneqq \{R(y)\mid y\in 
Y\}$ where
\[
 R(y)\coloneqq \big\{x\in X
    \bigmid d(x,y)<d(x,y')\text{ for all }y'\in Y,\ y'\neq y\big\}.
\]
\end{de}

If the metric is nice enough, the regions $R(y)$ will be disjoint open sets. Assume that 
$X$ is also given a measure $\nu$ defined on the Borel subsets. In case that
for any couple of points $y\neq y'$ of $Y$ the hyperplane $P(y,y')=\{x\in X\mid 
d(x,y)=d(x,y')\}$ has measure zero, the Voronoi tessellation $\CV(Y)$ covers a conull 
subset of $X$ and it is therefore a measurable partition of $X$.

Typical examples of well\=/behaved metric spaces are the
Riemannian manifolds. Let $(M,\varrho)$ be a Riemannian manifold. The volume form 
induced by $\varrho$ naturally defines a measure $\nu$ on $M$ and every Voronoi 
tessellation of $M$ forms a measurable partition of $(M,\nu)$. Before we proceed, we 
fix some notation: 

\begin{de}
 Let $r>0$ be a constant. A subset $Y$ of a metric space $(X,d)$ is 
\emph{$r$\=/separated} if $d(y,y')\geq r$ for every couple of distinct 
points $y\neq y'\in Y$. It is \emph{$r$\=/dense} if for every point $x\in X$ there exists 
a $y\in Y$ with $d(x,y)<r$.
\end{de}

Recall that the \emph{injectivity radius} $\injrad(M)$ of a Riemannian manifold 
$(M,\varrho)$ is the largest $r\geq 0$ so that for every point $x\in M$ the exponential 
map is injective when restricted to the ball $B_0(r)\subset T_x(r)$. We will need the 
following technical lemma:

\begin{lem}
\label{lem:voronoi.has.bded.ratios}
 Let $(M,\varrho)$ be a complete $n$\=/dimensional Riemannian manifold with pinched 
sectional curvature $-K\leq \kappa_\varrho\leq K$ and positive injectivity radius 
$\injrad(M)>0$. Fix any constant $0<C<2\injrad(M)$. If a discrete subset 
$Y\subseteq M$ is $r$\=/separated and $R$\=/dense for some constants $0<r\leq R$ with 
$r\leq C$,
then the Voronoi tessellation $\CV(Y)$ has measure ratios bounded by $Q$, where 
$Q=Q(n,K,C,R/r)$ is a constant depending only on $n,K,C$ and the ratio $R/r$. 
% Moreover, the constant $Q$ is an increasing function of $C$.
\end{lem}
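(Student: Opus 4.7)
The plan is to exploit volume comparison geometry. The core observation is that the two hypotheses pin down each Voronoi region from both sides: the $r$-separation of $Y$ forces $B_y(r/2) \subseteq R(y)$ (a point within distance $r/2$ from $y$ satisfies $d(x,y')\geq d(y,y')-d(x,y) > r/2 > d(x,y)$ for every other $y'\in Y$), while $R$-density forces $R(y) \subseteq B_y(R)$. Consequently, for any pair of regions $R(y), R(y') \in \CV(Y)$,
\[
\frac{\nu(R(y))}{\nu(R(y'))} \;\leq\; \frac{\nu(B_y(R))}{\nu(B_{y'}(r/2))}.
\]
So the lemma is reduced to a uniform two-sided volume comparison for balls.

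Next I would invoke the standard comparison theorems made available by the curvature hypothesis $-K \leq \kappa_\varrho \leq K$. Let $v_n^{\kappa}(\rho)$ denote the volume of a ball of radius $\rho$ in the simply connected $n$-dimensional space form of constant sectional curvature $\kappa$. The Bishop--Gromov inequality (which needs only the Ricci bound $\mathrm{Ric} \geq -(n-1)K$ and no injectivity hypothesis) gives $\nu(B_y(R)) \leq v_n^{-K}(R)$. For the lower bound, the hypothesis $r\leq C< 2\injrad(M)$ guarantees $r/2 < \injrad(M)$, so G\"unther's inequality applies (using the upper sectional bound $\kappa_\varrho\leq K$) and yields $\nu(B_{y'}(r/2)) \geq v_n^{K}(r/2)$. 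Combining,
\[
\frac{\nu(R(y))}{\nu(R(y'))} \;\leq\; \frac{v_n^{-K}(R)}{v_n^{K}(r/2)}.
\]

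It remains to show that the right-hand side depends only on $n,K,C$ and $R/r$. Writing $R = (R/r)\,r$ with $r\leq C$, both radii lie in the bounded interval $\bigl(0,\,(R/r)\,C\bigr]$, on which $v_n^{\pm K}$ are continuous positive functions. The only delicate point is the behaviour as $r\to 0$; but the Taylor expansion $v_n^{\kappa}(\rho)=\omega_n\rho^n\bigl(1+O(\kappa \rho^2)\bigr)$ near the origin shows that the quotient behaves like $(2R/r)^n$ times a factor controlled by $n,K$ and the upper radius $(R/r)C$, so it is uniformly bounded by a constant $Q(n,K,C,R/r)$, as required. The main (mild) obstacle is the careful bookkeeping of which comparison inequality needs which curvature and injectivity assumption; once this is tracked, the proof is routine comparison geometry.
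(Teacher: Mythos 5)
Your argument is essentially the paper's own (the paper only sketches it): trap each Voronoi cell between $B_y(r/2)$ and $B_y(R)$ using separation and density, then bound the ball volumes by comparison with the constant-curvature model spaces, the upper bound by Bishop--Gromov and the lower bound by G\"unther's inequality via the injectivity radius hypothesis. The details you supply are correct (modulo the standard caveat that for $K>0$ and $r/2\geq\pi/\sqrt{K}$ one replaces $v_n^{K}(r/2)$ by the total volume of the model sphere, which still yields a constant depending only on $n$ and $K$), so the proposal is a valid completion of the paper's omitted argument.
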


To prove Lemma \ref{lem:voronoi.has.bded.ratios} it is enough to realise that every 
Voronoi cell contains a ball of radius $r/2$ and is contained in a ball of radius $R$. 
One can then estimate the volume of such cell using the Bishop\=/Gromov Comparison 
Theorem. We will omit the details of the argument.

\begin{rmk}
 With the same techniques one can also prove 
that every compact Riemannian manifold is a doubling measure space.
\end{rmk}

We remark that in any metric space for any fixed $r>0$ we can find a maximal 
$r$\=/separated subset. Such subsets are exactly those sets which are both 
$r$\=/separated and $r$\=/dense. Then we have:

\begin{thm}
\label{thm:action.on.mfld.exp_iff_approx.graph.expand}
Let $\Gamma$ be a finitely generated group and $S$ a finite symmetric generating set 
which contains the identity.
 Given a compact Riemannian manifold $(M,\varrho)$ and a sequence $r_n\to 0$, choose for 
every $n$ a maximal $r_n$\=/separated set $Y_n$ and consider the associated Voronoi 
tessellation $\CV(Y_n)$. 
If $\Gamma$ acts by quasi\=/symmetric homeomorphisms on $M$ and the action has bounded 
measure distortion, then the sequence of 
approximating graphs $\CG\big(\CV(Y_n)\big)$ is a family of expanders if and only if the 
action is expanding in measure.
\end{thm}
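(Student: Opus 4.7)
The plan is to reduce the theorem to the tools already assembled: Theorem~\ref{thm:meas.exp_iff_unif.Cheeger.const} (or its mesh-based Corollary~\ref{cor:cheeg.const.and.mesh___expanding}) will handle the equivalence ``expanding in measure $\Leftrightarrow$ uniform Cheeger constant'', while Proposition~\ref{prop:nice.action.on.doubling___bded.degree} will provide uniform bounds on the degrees of the $\CG(\CV(Y_n))$, so that a uniform Cheeger lower bound upgrades to membership in a family of expanders in the sense of Definition~\ref{de:expander.graphs}.

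First I would record the basic geometry of the Voronoi partitions. By maximality each $Y_n$ is simultaneously $r_n$\=/separated and $r_n$\=/dense, so for every $y\in Y_n$ the region $R(y)$ satisfies $B_y(r_n/2)\subseteq R(y)\subseteq B_y(r_n)$. This forces eccentricity uniformly bounded by $2$, $\mathrm{mesh}(\CV(Y_n))\leq 2r_n\to 0$, and, by compactness of $M$, $\abs{\CV(Y_n)}\to\infty$. Applying Lemma~\ref{lem:voronoi.has.bded.ratios} with $R/r=1$ and any fixed constant $C<2\injrad(M)$ (valid once $r_n\leq C$) yields a uniform bound $Q$ on the measure ratios of the $\CV(Y_n)$.

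Having established bounded measure ratios and vanishing mesh, Corollary~\ref{cor:cheeg.const.and.mesh___expanding} together with Lemma~\ref{lem:meas.exp___cheeg.const} immediately gives that the action is expanding in measure if and only if the sequence $h(\CG(\CV(Y_n)))$ is bounded away from $0$. It remains only to verify that these graphs have uniformly bounded degree, which is the content of Proposition~\ref{prop:nice.action.on.doubling___bded.degree}. Its hypotheses hold verbatim: a compact Riemannian manifold is a doubling measure space (pinched sectional curvature combined with Bishop-Gromov provides the doubling constant $D$); the partitions have uniformly bounded ratios and eccentricity by the previous paragraph; and by hypothesis the action is quasi\=/symmetric with bounded measure distortion. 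Since $S$ is finite, one simply takes $\eta$ to majorise the moduli of quasi\=/symmetry of all $\rho(s)$, $s\in S$, and $\Theta$ to be the maximum of their measure distortions. The resulting degree bound depends only on $\eta,\Theta,Q,\xi,D$, hence is independent of $n$.

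The proof is thus an assembly of previously established lemmas, and I would not expect any genuine obstacle beyond the bookkeeping of constants; the one point that deserves care is simply verifying that each invocation of Lemma~\ref{lem:voronoi.has.bded.ratios} and Proposition~\ref{prop:nice.action.on.doubling___bded.degree} is made with parameters that do not depend on $n$. The Voronoi construction is tailored precisely so that this uniformity is automatic.
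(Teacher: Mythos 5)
Your proposal is correct and follows essentially the same route as the paper: establish eccentricity $\leq 2$, mesh $\leq 2r_n\to 0$, and uniformly bounded measure ratios via Lemma~\ref{lem:voronoi.has.bded.ratios}, then invoke Theorem~\ref{thm:meas.exp_iff_unif.Cheeger.const} (equivalently, Corollary~\ref{cor:cheeg.const.and.mesh___expanding} plus Lemma~\ref{lem:meas.exp___cheeg.const}) for the Cheeger-constant equivalence and Proposition~\ref{prop:nice.action.on.doubling___bded.degree} for the uniform degree bound. The only differences are cosmetic (you spell out the inclusion $B_y(r_n/2)\subseteq R(y)\subseteq B_y(r_n)$, which the paper leaves implicit).
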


\begin{proof}
Since $M$ is compact, $\injrad(M)>0$ and $M$ has pinched sectional curvature.  
For all but finitely many $n$ we have $r_n<\injrad(n)$ and we can therefore apply Lemma 
\ref{lem:voronoi.has.bded.ratios} to deduce that the measurable 
partitions $\CV(Y_n)$ have uniformly bounded measure ratios. Moreover, the 
diameter of the regions $R\in\CV(Y_n)$ is at most $2r_n$ because 
$Y_n$ is a maximal $r_n$ separated set. 

We are now under the hypotheses of Theorem \ref{thm:meas.exp_iff_unif.Cheeger.const}, 
whence we deduce that the 
approximating graphs $\CG\big(\CV(Y_n)\big)$ have a uniform lower bound on their Cheeger 
constant if and only if the action of $\Gamma$ is expanding in measure.

The regions of the Voronoi tessellation $\CV(Y)$ have at most eccentricity $2$ and we 
already remarked that $(M,\nu)$ is a doubling measure space. We are then under the 
hypotheses of Proposition \ref{prop:nice.action.on.doubling___bded.degree} and thus we 
conclude that all the graphs $\CG\big(\CV(Y_n)\big)$ have uniformly bounded degrees.
\end{proof}

\section{Expanders, warped cones and coarse embeddings}
\label{sec:exp.and.warped.cones}

Let $(X,d)$ be a compact metric space and let $\Gamma=\angles{S}$ act on $X$ by 
homeomorphisms. Following \cite{Roe05}, we define the \emph{warped metric} $\wdist{}$ on 
$X$ as the maximal distance such that 
$\wdist{}(x,y)\leq \dist{}(x,y)$ and $\wdist{} (x,sx)\leq 1$ for every $x,y\in X$ 
and $s\in S$.

\begin{rmk}
 \label{rmk:expression.for.wdist}
 One can show that given two points $x,y\in X$ their warped distance $\wdist{}(x,y)$ is 
equal to the infimum of the sums
\[
 \dist{}(x_0,x_1)+1+\dist{}(s_1\cdot x_1,x_2)+1+\cdots+\dist{}(s_{n-1}\cdot 
x_{n-1},x_n)
\]
where the $x_i$'s are points in $X$ with $x=x_0$ and $y=x_n$ and the 
$s_i$'s are elements in $S$. Since $X$ is locally compact, this infimum is actually a 
minimum.
\end{rmk}

For simplicity, we will restrict our attention to metric spaces of the form $(M,d)$ where 
$M$ is a compact Riemannian manifold and $d$ is the distance induced by the Riemannian 
metric $\varrho$. 
% Moreover, we will assume that $\Gamma$ acts by diffeomorphisms. \note{Are we?}
A similar construction can be carried out for other well\=/behaved compact metric spaces 
such as finite simplicial complexes. 

For $t\geq 1$, we consider the family of metric spaces $(M,\dist{t})$ obtained rescaling 
the distance $\dist{t}\coloneqq t\cdot \dist{}$. We can then warp each of these spaces 
under the action of $\Gamma$ obtaining a different family $(M,\wdist{t})$.
Notice that $\dist{t}(x,y)$ is equal to $\wdist{t}(x,y)$ whenever one of the two 
(and hence both) is smaller than $1$. It follows that a set $Y\subset M$ is 
$1/3$\=/separated in $(M,\dist{t})$ if and only if so it is in $(M,\wdist{t})$.

For every $t\geq 1$, let $Y_t\subset(M,\wdist{t})$ be a maximal $1/3$\=/separated set and 
let $\CX_t$ be the simplicial graph whose set of vertices is $Y_t$ and whose edges are 
the 
pairs of vertices with warped distance less than $2$:
\[
 E(\CX_t)=\big\{(x,y)\bigmid x,y\in Y_t,\ \wdist{t}(x,y)< 2\big\}.
\]
The graphs $\CX_t$ uniformly represent the coarse structure of the metric spaces $(M, 
\wdist{t})$. In fact, it is easy to verify that the inclusion of the vertices 
$Y_t\subset M$ induces a $(2,1)$\=/quasi\=/isometry between 
the graph $\CX_t$ and $(M,\wdist{t})$ for every $t\geq 1$.

\

Consider now the Voronoi tessellations $\CV(Y_t)$ of $M$. We would like to say that the 
graph $\CG\big(\CV(Y_t)\big)$ approximating the action $\Gamma\curvearrowright M$ is 
in some sense the same graph as $\CX_t$. This is not quite the case, because two vertices 
$y,y'\in Y_t$ may be far away in the approximating graph $\CG\big(\CV(Y_t)\big)$ but quite 
close under the distance $\dist{t}$ and thus form an edge of $\CX_t$. For convenience we 
will then define the \emph{lavish approximating graphs} as the graphs $\widetilde 
\CG\big(\CV(Y_t)\big)$ whose vertices are the elements of $Y_t$ and such that $(y,y')$ 
forms an edge if and only if
\[
 s\cdot \overline{R(y)}\cap \overline{R(y')}\neq \emptyset
\]
for some $s\in S$.

The lavish approximating graph could contain many more edges then the usual 
approximating graph and it could also have a rather different geometry.
Despite this, the lavish approximating graphs still encode well 
the dynamic properties of the action of $\Gamma$. 

\begin{lem}
 \label{lem:lavish.graph.expanders_iff_measure.expand}
 Let $\Gamma$ act on a compact Riemannian manifold $(M,\varrho)$ \emph{via} 
quasi\=/symmetric 
homeomorphisms and with bounded measure distortion. 
For any sequence $t_n>1$ with $t_n\to \infty$ the action of 
$\Gamma$ is expanding in measure if and only if the lavish approximating graphs 
$\widetilde \CG\big(\CV(Y_{t_n})\big)$ form a family of expanders.
\end{lem}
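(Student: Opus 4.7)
The plan is to reduce both implications to Lemma~\ref{lem:meas.exp___cheeg.const} and Proposition~\ref{prop:cheeg.const___expanding} applied to the ordinary approximating graphs $\CG(\CV(Y_{t_n}))$, by exploiting that $\widetilde{\CG}(\CV(Y_{t_n}))$ contains $\CG(\CV(Y_{t_n}))$ as a subgraph on the same vertex set $Y_{t_n}$ (a positive-measure intersection of $sR(y)$ with $R(y')$ implies a nontrivial intersection of their closures). By Lemma~\ref{lem:voronoi.has.bded.ratios} the Voronoi partitions $\CV(Y_{t_n})$ have uniformly bounded measure ratios $Q$ and mesh tending to $0$, so the hypotheses of Theorem~\ref{thm:meas.exp_iff_unif.Cheeger.const} are in force.

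A preliminary step is to check that the lavish graphs have uniformly bounded degree. I would do this by rerunning the argument of Proposition~\ref{prop:nice.action.on.doubling___bded.degree} with $s(\overline{R})$ in place of $s(R)$: the proof only uses that the image of a cell is a set of bounded eccentricity with controlled measure, and both properties are preserved by closure because boundaries of Voronoi regions have measure zero and $R$, $\overline{R}$ sit inside the same pair of balls.

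The forward direction is then immediate. If the action is $\alpha$-expanding, Lemma~\ref{lem:meas.exp___cheeg.const} supplies a uniform Cheeger constant $\varepsilon=\varepsilon(\alpha,Q)>0$ for $\CG(\CV(Y_{t_n}))$; since adding edges only enlarges vertex boundaries, $h\bigpar{\widetilde{\CG}(\CV(Y_{t_n}))}\geq h\bigpar{\CG(\CV(Y_{t_n}))}\geq\varepsilon$, and together with the degree bound this exhibits the lavish graphs as a family of expanders.

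The reverse direction is the main obstacle, because the lavish Cheeger bound is a statement about a graph with strictly more edges than $\CG(\CV(Y_{t_n}))$, so one cannot invoke Proposition~\ref{prop:cheeg.const___expanding} off the shelf. I would rework that proof with the lavish boundary $\partial_{\widetilde{\CG}}\vertex_n(A_n)$ in place of the ordinary one, using the following geometric observation: if $y''\in\partial_{\widetilde{\CG}}\vertex_n(A_n)$, then $\overline{R(y'')}$ meets $s\cdot\overline{R(y)}$ for some $y\in\vertex_n(A_n)$ and $s\in S$, so $R(y'')$ still lies in a neighbourhood of $s\cdot\CN_n[A_n]$ whose radius is controlled by the mesh of $\CV(Y_{t_n})$ and by the modulus of uniform continuity of $s$ on a fixed compact neighbourhood of $K$. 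Both shrink to $0$ as $n\to\infty$, so taking a $\limsup$ yields
\[
 \nu(S\cdot K)\geq\limsup_{n\to\infty}\nu\bigparen{\CN_n\bigbrack{\vertex_n(A_n)\cup\partial_{\widetilde{\CG}}\vertex_n(A_n)}}.
\]
Combining this with $\nu(\CN_n[\vertex_n(A_n)])\to\nu(K)$, the uniform measure-ratio constant $Q$, and the hypothesised lavish Cheeger bound $\varepsilon$ produces a measure-expansion constant $\alpha\geq\varepsilon/Q$. The complementary case $\nu(K)>\nu(M)/2$ is handled by the symmetric argument at the end of Lemma~\ref{lem:meas.exp___cheeg.const}, using $S=S^{-1}$. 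The subtlety is entirely in bookkeeping that the lavish boundary cells stay within the shrinking neighbourhood of $S\cdot K$, which is precisely where uniform continuity of the generators on compacts enters.
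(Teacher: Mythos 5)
Your proposal is correct and follows essentially the same route as the paper: reduce to the ordinary approximating graphs via Theorem \ref{thm:action.on.mfld.exp_iff_approx.graph.expand}, get the forward direction from the monotonicity of the Cheeger constant under adding edges plus a doubling/eccentricity degree bound for the extra lavish edges, and get the reverse direction by rerunning the argument of Proposition \ref{prop:cheeg.const___expanding} after observing that the lavish neighbours of a cell stay inside a neighbourhood of the ordinary neighbours whose radius (of order $t_n^{-1}$) shrinks to zero. The paper states this last containment as the $3t_n^{-1}$\=/neighbourhood estimate and then invokes "the same proof as for Proposition \ref{prop:cheeg.const___expanding}", which is exactly the bookkeeping you carry out.
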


\begin{proof}
 Notice first that the Voronoi tessellations obtained by the set $Y_n$ with respect to 
the 
metrics $d$ and $\dist{t}$ are actually the same. Hence they yield the 
same approximating graphs. On $(M,d)$, the set $Y_n$ is a maximal $(1/3)t_n^{-1}$ 
separated set and thus we are in the hypotheses of Theorem 
\ref{thm:action.on.mfld.exp_iff_approx.graph.expand}. From this we deduce that the usual 
approximating graphs $ \CG\big(\CV(Y_{t_n})\big)$ form a family of expanders if and only 
if the 
action is expanding in measure.

Now, the graph $\widetilde \CG\big(\CV(Y_{t_n})\big)$ contains the approximating graph 
$\CG\big(\CV(Y_{t_n})\big)$ and they both have the same set of vertices, thus the Cheeger 
constants of 
the lavish approximating graphs are at least as big as those of the ordinary 
approximating graphs. The graphs $\widetilde \CG\big(\CV(Y_{t_n})\big)$ have uniformly 
bounded degree because 
there is a uniform bound on the extra edges that are 
added to each vertex (the number of adjacent tiles can be estimated using the doubling 
condition and the uniform bound on the eccentricity). It follows that when the action 
is expanding in measure, the graphs 
$\widetilde \CG\big(\CV(Y_{t_n})\big)$ form a family of expanders.

For the other implication it is enough to notice that for any vertex $y\in 
Y_{t_n}$ we have that the union of the adjacent regions in the lavish graph
\[
 A\coloneqq\bigcup\big\{R(y')\bigmid (y,y')\text{ is an edge in }\widetilde 
\CG\big(\CV(Y_{t_n})\big)\big\}
\]
is actually contained in a neighbourhood of radius $3t_n^{-1}$ of the union of the 
adjacent regions in the usual graph
\[
 A\subseteq N_{3t_n^{-1}}\left(
 \bigcup\big\{R(y')\bigmid (y,y')\text{ is an edge in }\CG\big(\CV(Y_{t_n})\big)\big\}
 \right).
\]
Then the same proof as for Proposition \ref{prop:cheeg.const___expanding} implies that if 
the lavish approximating graphs form a family of expanders then the action of $\Gamma$ is 
expanding in measure.
\end{proof}

The convenience of the lavish approximating graphs comes from the following:

\begin{lem}
\label{lem:lavish.approx.graph_q.i_Xt}
 For every $t\geq 1$ the lavish approximating graph $\widetilde \CG\big(\CV(Y_t)\big)$ is 
contained 
in the graph $\CX_t$ and the inclusion is a $(L,A)$\=/quasi\=/isometry where the 
constants $L$ and $A$ depend only on the geometry of $M$.
\end{lem}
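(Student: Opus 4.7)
The plan is to exploit the fact that $\widetilde\CG\bigparen{\CV(Y_t)}$ and $\CX_t$ share the same vertex set $Y_t$, so the inclusion is automatically coarsely onto, and the quasi-isometry reduces to a bi-Lipschitz comparison of their two path metrics.

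First I would show that every edge of $\widetilde\CG\bigparen{\CV(Y_t)}$ is an edge of $\CX_t$. Since $Y_t$ is maximal $1/3$-separated for $\wdist{t}$ and separation/density in $\wdist{t}$ coincides with that in $\dist{t}$ below scale $1$, each Voronoi cell $R(y)$ has $\dist{t}$-diameter at most $2/3$. An edge $(y,y')$ of the lavish graph, witnessed by some $s\in S$ and $x\in\overline{R(y)}$ with $s\cdot x\in\overline{R(y')}$, then satisfies
\[
\wdist{t}(y,y')\leq \dist{t}(y,x)+\wdist{t}(x,s\cdot x)+\dist{t}(s\cdot x,y')\leq \tfrac{1}{3}+1+\tfrac{1}{3}<2,
\]
so $(y,y')$ is an edge of $\CX_t$. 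This gives the inclusion and the elementary inequality $d_{\CX_t}\leq d_{\widetilde\CG}$ on $Y_t$.

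For the reverse inequality it suffices to bound by a constant $L=L(M)$ the $\widetilde\CG$-distance of any $\CX_t$-edge. Given $(y,y')$ with $\wdist{t}(y,y')<2$, I would use the formula of Remark~\ref{rmk:expression.for.wdist}: each generator jump contributes $1$ to the sum, so any almost-realising sequence contains at most one such jump, and we are reduced to two cases: (i) $\dist{t}(y,y')<2$, or (ii) there exist $x_1\in M$ and $s\in S$ with $\dist{t}(y,x_1)+\dist{t}(s\cdot x_1,y')<1$. In case (i) I would follow a minimal $\dist{t}$-geodesic from $y$ to $y'$: whenever it crosses from one closed Voronoi cell into another, the two cells share a boundary point, so the associated $Y_t$-vertices form an edge of $\widetilde\CG$ with $s=1$. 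In case (ii) I would pick $y_1,y_2\in Y_t$ with $x_1\in\overline{R(y_1)}$ and $s\cdot x_1\in\overline{R(y_2)}$; by construction $(y_1,y_2)$ is a lavish edge, and the pairs $(y,y_1)$, $(y_2,y')$ reduce to case (i).

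The main obstacle is the quantitative estimate in case (i): one must bound, uniformly in $t$, the number of distinct Voronoi cells crossed by a $\dist{t}$-geodesic of length below a fixed constant. Since $Y_t$ is $1/3$-separated in $\dist{t}$, each cell contains a $\dist{t}$-ball of radius $1/6$, and a Bishop--Gromov volume comparison on the compact manifold $M$ (as already used in the proof of Lemma~\ref{lem:voronoi.has.bded.ratios}) bounds the number of cells meeting any $\dist{t}$-ball of radius $3$ by a constant depending only on the dimension, curvature bounds and injectivity radius of $M$. Combining this with the two reduction steps above yields an $(L,0)$-quasi-isometry whose constants depend only on the Riemannian geometry of $M$, which is the content of the lemma.
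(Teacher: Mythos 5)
Your proposal is correct and follows essentially the same route as the paper: the same one\-/line verification that lavish edges are $\CX_t$\-/edges, the same reduction of an $\CX_t$\-/edge to the two cases $\dist{t}(y,y')<2$ or a single generator jump, and the same geodesic\-/tracking plus Bishop\--Gromov volume count to bound the number of Voronoi cells traversed. The only difference is that you spell out the diameter bound $\wdist{t}(y,y')\leq \tfrac13+1+\tfrac13$ and the at\-/most\-/one\-/jump observation more explicitly than the paper does, which is harmless.
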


\begin{proof}
Both $\widetilde \CG\big(\CV(Y_t)\big)$ and $\CX_t$ have $Y_t$ as set of vertices. If 
$(y,y')$ is an 
edge in $\widetilde \CG\big(\CV(Y_t)\big)$, by definition there must exist an element 
$x\in X$ such 
that $x\in \overline{R(y)}$ and $s\cdot x \in \overline{R(y')}$. Then
\[
 \wdist{t}(y,y')\leq \wdist{t}(y,x)+\wdist{t}(x,s\cdot x)+\wdist{t}(s\cdot x, y')<2
\]
thus $(y,y')$ is also an edge of $\CX_t$.

Conversely, if $\wdist{t}(y,y')<2$ then either we also have $\dist{t}(y,y')<2$ or there 
exist a point $x\in X$ with $\dist{t}(y,x)<1$ and $\dist{t}(s\cdot x,y')<1$. It is hence 
enough to bound the distance in $\widetilde \CG\big(\CV(Y_t)\big)$ of two vertices $y,y'$ 
with 
$\dist{t}(y,y')<2$.

Picking a geodesic path $\gamma$ in $(M,\dist{t})$ between $y$ and $y'$ we can define a 
sequence of vertices $y=y_0,y_1,\ldots,y_n=y'$ by keeping track of which regions of 
$\CV(Y_t)$ are traversed by $\gamma$. Then each couple $(y_i,y_{i+1})$ is an edge of 
$\widetilde \CG\big(\CV(Y_t)\big)$. We can bound $n$ using the geometry of $M$ because 
all 
the regions $R(y_i)$ are contained in a ball of radius $3$ of $(M,\dist{t})$ and thus one 
can obtain the required uniform bound using volume estimate techniques (it is the same 
kind of argument needed to prove Lemma \ref{lem:voronoi.has.bded.ratios}).
\end{proof}

Following \cite{Roe05} we will now define the warped cones. Let $(M,\varrho)$ be a 
compact Riemannian manifold, we define the open cone on $M$ as the space $\cone 
(M)\coloneqq M\times [1,\infty)$ with the metric $d_\cone$ induced by the Riemannian 
metric 
$\varrho_\cone\coloneqq t^2\varrho+dt^2$ 
where $dt^2$ is the standard Euclidean metric on $\RR$. An action of $\Gamma$ on $M$ by 
homeomorphism induces an action by homeomorphisms on the cone $\cone (M)$ which fixes the
coordinate $t$.

\begin{de}
 The \emph{warped cone} $\cone_\Gamma (M)$ of the manifold $M$ under the action of 
$\Gamma$ 
is the 
metric space $\big(\cone (M),\wdist{}\big)$ where $\wdist{}$ is the metric obtained 
warping the 
cone 
metric $d_\cone$ of $\cone (M)$.
\end{de}

In the previous part of this section, we have been studying the graphs $\CX_t$ which were 
quasi\=/isometric approximations of the metric spaces $(M,\wdist{t})$. Now, one would be 
temped to 
see them as approximations of the geometry of the level sets of the warped cone 
$\cone_\Gamma (M)$, and indeed we have the following:

\begin{lem}
\label{lem:level.sets_q.i_warped.spaces}
 For every $t_0\geq 1$, the level set $M\times\{t_0\}$ with the restriction of the metric $\wdist{}$ is 
$C$\=/bi\=/Lipschitz equivalent to the metric space $(M,\wdist{t_0})$ with a constant $C$ 
depending only on the geometry of $M$.
\end{lem}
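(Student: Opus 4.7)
The plan is to show that the map $\iota_{t_0}\colon (M,\wdist{t_0})\to\cone_\Gamma(M)$, $x\mapsto(x,t_0)$, is a $C$\=/bi\=/Lipschitz embedding onto the level set $M\times\{t_0\}$ with $C=\max\{2,\diam(M,d)\}$. The inequality $\wdist{}\bigparen{(x,t_0),(y,t_0)}\leq \wdist{t_0}(x,y)$ is immediate: for any minimising representation of $\wdist{t_0}(x,y)$ as in Remark \ref{rmk:expression.for.wdist}, lift each intermediate point $a_i$ horizontally to $(a_i,t_0)$ and note that each horizontal cone\=/segment from $(a_{i-1},t_0)$ to $(a_i,t_0)$ has length exactly $t_0\,d(a_{i-1},a_i)=\dist{t_0}(a_{i-1},a_i)$, while generator jumps preserve the height and still cost $1$, so the lifted cost cannot exceed the original.

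The reverse inequality $\wdist{t_0}(x,y)\leq CL$, where $L\coloneqq\wdist{}\bigparen{(x,t_0),(y,t_0)}$, requires a case analysis. If $L>t_0$, then $\wdist{t_0}(x,y)\leq t_0\,d(x,y)\leq \diam(M,d)\cdot L$ and we are done. Otherwise $L\leq t_0$, and by the local compactness of $\cone(M)$ I pick a minimising representation of $L$ as in Remark \ref{rmk:expression.for.wdist}, consisting of $n$ smooth cone\=/geodesic pieces of lengths $\ell_i$ joined by $k=n-1$ generator jumps, with $L_c+k=L$ for $L_c\coloneqq\sum_i\ell_i$.

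The main obstacle is the minimum\=/height estimate $\tau_{\min}\geq t_0-L_c/2\geq t_0/2$ along this representing path. Since the $\Gamma$\=/action on $\cone(M)$ fixes the height coordinate, the function $\tau$ is continuous along the whole concatenated path and varies only along the smooth pieces; the pointwise bound $|\dot\tau|\leq\sqrt{\tau^2|\dot x|^2+\dot\tau^2}$ then forces the total variation of $\tau$ to be at most $L_c$, which must accommodate the round trip from $t_0$ down to $\tau_{\min}$ and back. Given this, each smooth piece satisfies $d(s_{i-1}a_{i-1},a_i)\leq\int|\dot x|\,du\leq \ell_i/\tau_{\min}\leq 2\ell_i/t_0$ (with $s_0=\id$ by convention), so projecting the whole sequence vertically to height $t_0$ while keeping the same generator jumps gives a valid candidate for $\wdist{t_0}(x,y)$ of total cost $\sum_i t_0\,d(s_{i-1}a_{i-1},a_i)+k\leq 2L_c+k\leq 2L$. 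Together with the easy direction this yields the claimed bi\=/Lipschitz equivalence.
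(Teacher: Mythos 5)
Your proof is correct, but the hard inequality $\wdist{t_0}\leq C\,\wdist{}$ is obtained by a genuinely different route than the paper's. The paper takes the minimising chain of cone geodesics, observes that each one projects to a geodesic of $(M,\varrho)$, and ``unrolls'' the whole chain onto a single Euclidean cone $[0,L]\times[1,\infty)$ with $t^2ds^2+dt^2$, where $L=\sum_i\ell_i$ is the total angle (the sum of the lengths of the projections); it then bounds $L\leq\diam(M,d)$ by a shortcut argument and compares the horizontal arc at height $t_0$ (length $Lt_0$) with the chord via the explicit planar estimate $d\bigl((0,t_0),(L,t_0)\bigr)\geq 2t_0\sin(L/2)$, yielding $C=\max\{\pi/2,\diam(M,d)/2\}$. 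You instead dichotomise on whether $L\coloneqq\wdist{}\bigl((x,t_0),(y,t_0)\bigr)$ exceeds $t_0$: the long case is disposed of by $\wdist{t_0}\leq t_0\,d(x,y)\leq\diam(M,d)\,L$, and in the short case your total-variation bound on the height coordinate ($|\dot\tau|\leq$ cone speed, round trip from $t_0$ to $\tau_{\min}$ and back) gives $\tau_{\min}\geq t_0/2$, whence each piece's horizontal displacement in $(M,d)$ is at most $2\ell_i/t_0$ and the projected chain costs at most $2L$. Your argument is more elementary — it needs no unrolling, no trigonometry, and no appeal to the fact that cone geodesics project to $M$-geodesics (a property of warped-product metrics that the paper uses without proof) — at the price of a slightly larger constant, $\max\{2,\diam(M,d)\}$ instead of $\max\{\pi/2,\diam(M,d)/2\}$; both depend only on the geometry of $M$, as required. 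The only point to polish is the phrase ``smooth cone-geodesic pieces'': shortest paths in $M\times[1,\infty)$ may run along the boundary $\{t=1\}$ and need only be Lipschitz there, but your speed and total-variation estimates hold for rectifiable curves, so nothing is lost.
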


\begin{proof}
 The obvious inclusion $(M,\wdist{t_0})\hookrightarrow (M\times \{t_0\},\wdist{})$ is a 
$1$\=/Lipschitz map. Indeed, for every two points $x,y\in M$ there exist a sequence 
$\gamma_0,\ldots,\gamma_n$ of geodesic paths $\gamma_i\colon[0,1]\to(M,\dist{t_0})$ such 
that $\gamma_0(0)=x$, $\gamma_n(1)=y$, $\gamma_{i+1}(0)=s_i\cdot\gamma_i(1)$ for some 
$s_i\in S$ and
\[
 \wdist{t_0}(x,y)=l(\gamma_0)+1+l(\gamma_1)+1+\cdots+l(\gamma_n).
\]

Since the metric $\dist{t_0}$ is trivially equal to the metric obtained \emph{via} the 
pull-back 
of the Riemannian metric $t^2\varrho+dt^2$, the paths $\gamma_i$ have the same length 
with respect to the metric $\dist{}_\cone$. The action of $\Gamma$ is the same and 
it still costs $1$ to `jump' from $z$ to $s\cdot z$. Thus the path 
$\gamma_0,\ldots,\gamma_n$ has the same length in $\big(\cone_\Gamma (M),\wdist{}\big)$ 
and hence 
$\wdist{}(x,y)\leq\wdist{t_0}(x,y)$.

Conversely, for every $x,y\in M\times\{t_0\}$ there exists a sequence 
$\gamma_0,\ldots,\gamma_n$ of geodesic paths $\gamma_i\colon[0,1]\to\big(\cone_\Gamma 
(M),\wdist{}\big)$ such 
that $\gamma_0(0)=x$, $\gamma_n(1)=y$, $\gamma_{i+1}(0)=s\cdot\gamma_i(1)$ for some 
$s\in S$ and
\[
 \wdist{}(x,y)=l(\gamma_0)+1+l(\gamma_1)+1+\cdots+l(\gamma_n).
\]

Let $\widetilde\gamma_i$ the projection of the path $\gamma_i$ into the level set 
$M\times\{t_0\}$. Now the path $\widetilde\gamma_0,\ldots,\widetilde\gamma_n$ also joins 
the points $x,y$ in $(M,\wdist{t_0})$, thus we have
\begin{equation}
\label{eq:wdist(x,y)}
 \wdist{t_0}(x,y)
    \leq l(\widetilde\gamma_0)+1+l(\widetilde\gamma_1)+1+\cdots+l(\widetilde\gamma_n).
\end{equation}
It is hence enough to bound the lengths $l(\widetilde\gamma_i)$. 

For $i=0,\ldots,n$, let $t_i\geq 1$ denote the level of the starting point of the path 
$\gamma_i$, so that $\gamma_i(0)$ and $\gamma_{i-1}(1)$ lie in $M\times\{t_i\}$. Let 
$p\colon \big(\cone (M),\dist{}_\cone\big)\to 
\big(M\times\{1\},\dist{}_\cone\big)\cong (M,d)$ 
denote the projection.
Since the paths $\gamma_i$ are geodesics on $\big(\cone (M),\dist{}_\cone\big)$, the 
images of their 
projections $p(\gamma_i)$ are geodesic paths in $(M,d)$ (possibly with non\=/uniform 
speed). Denote by $\widehat\gamma_i\colon [0,\ell_i]\to (M,\varrho)$ the geodesic of 
$(M,\varrho)$ obtained parametrizing $p(\gamma_i)$ by arc length (so that $\ell_i$ is 
the 
actual length of $p(\gamma_i)$). Define the map 
\begin{center}
 \begin{tikzpicture}[>=to]
\matrix(m)[matrix of math nodes,
% row sep=2.5em, 
% column sep=2.5em,
text height=1.5ex, text depth=0.25ex]
{	H_i\colon	& \hspace{0 pt}[0,\ell_i]\times[1,\infty)&[2 em]   \cone (M)	
	 \\
			&	(s,t)	 	& 	(\widehat\gamma_i(s),t)	\\ 
			};
\path[->,font=\scriptsize]
(m-1-2) edge (m-1-3);
\path[|->,font=\scriptsize]
(m-2-2) edge (m-2-3);
\end{tikzpicture}
\end{center}
Then the pull\=/back of the metric $\varrho_\cone$ is just a standard cone metric 
\[
 H_i^*\varrho_\cone= t^2ds^2+dt^2
\]
and we can see the path $\gamma_i$ as a geodesic path joining the points $(0,t_i)$ and 
$(\ell_i,t_{i+1})$ in this Euclidean cone.

We can now glue this cones together, obtaining a cone of total angle 
$\sum_{i=i}^n\ell_i$:
\[
 \bigcup_{i=1}^n\big([0,\ell_i]\times[1,\infty),t^2ds^2+dt^2\big)
 =\Big(\big[0,{\textstyle\sum_{i=i}^n}\ell_i\big]\times[1,\infty),t^2ds^2+dt^2\Big).
\]
Let $L\coloneqq \sum_{i=i}^n\ell_i$. Then the paths $\gamma_i$ coincide at the gluing 
points, thus they can be joined to form a path between $(0,t_0)$ and $\big(L,t_0\big)$. 

Note that the total angle $L$ is bounded by the diameter of the manifold $\diam(M)$. Indeed, if this was not the case there would be a geodesic $\widehat\beta$ joining $p(x)$ and $p(y)$ in $(M,\varrho)$ having length less then $L$. The subspace of $\CO(M)$ projecting onto $\widehat\beta$ would be isometric to an Euclidean cone of total angle $\ell(\widehat\beta)<L$ and therefore there would be a geodesic in $\CO(M)$ joining $x$ and $y$ of length less than $\ell(\gamma_0)+\cdots+\ell(\gamma_n)\leq \wdist{}(x,y)$, a contradiction.

Notice that also the paths $\widetilde\gamma_i$ can be realised in the cone 
$[0,L]\times 
[0,\infty)$ and they coincide with the projections of the $\gamma_i$'s onto the level 
set 
$[0,L]\times\{t_0\}$, thus we have
\[
 \sum_{i=1}^n l(\widetilde\gamma_i)=Lt_0.
\]

A straightforward computation in Euclidean geometry shows that the distance between 
$(0,t_0)$ and $(L,t_0)$ inside the cone $[0,L]\times [1,\infty)$ satisfies:
\[
 d\big((0,t_0),(L,t_0)\big)\geq \left\{
 \begin{array}{lc}
  2t_0\sin(L/2) & \text{ if }L\leq \pi \\
  2t_0  & \text{ if }L\geq \pi \\
 \end{array}
 \right. 
\]
and since $\sin(x)\geq 2x/\pi$ for $x\leq \pi/2$, we deduce that 
\[
 \sum_{i=1}^n l(\widetilde\gamma_i)=Lt_0
 \leq \max\left\{\frac{\pi}{2},\frac{L}{2}\right\}\, d\big((0,t_0),(L,t_0)\big)
 \leq\max\left\{\frac{\pi}{2},\frac{L}{2}\right\}\sum_{i=1}^nl(\ell_i).
\]

As we already noted that $L$ is bounded by the diameter of $(M,\varrho)$, letting $C\coloneqq \max\big\{\pi/2,\diam(M,d)/2\big\}$ 
yields $\wdist{t_0}\leq C\wdist{}$ on $M\times\{t_0\}$, completing the proof of the lemma.
\end{proof}

We can now collect the results of this section in the following theorem. Here and after, 
an \emph{unbounded sequence of level sets} of a warped cone $\cone_\Gamma (M)$ is a 
sequence of level sets $M\times \{t_n\}\subset \cone_\Gamma (M)$ with $t_n\to\infty$. 
Recall that by Definition \ref{de:expander.metric.spaces} an unbounded sequence of level 
sets forms a family of expanders if and only if it is uniformly quasi\=/isometric to a 
family of expander graphs.

\begin{thm}
 Let $\Gamma=\angles{S}$ act by quasi\=/symmetric homeomorphisms with bounded measure 
distortion on a compact Riemannian manifold 
$(M,\varrho)$. Then one (equivalently, every) unbounded sequence of level sets 
$M\times\{t_n\}$ of the warped cone $\cone_\Gamma (M)$ forms a family of expanders if and 
only if the action is expanding in measure.
\end{thm}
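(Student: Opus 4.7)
The plan is to chain together the various quasi-isometric identifications established in Section \ref{sec:exp.and.warped.cones} and then invoke Lemma \ref{lem:lavish.graph.expanders_iff_measure.expand} to reduce everything to measure expansion.

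More precisely, fix an unbounded sequence $t_n\to\infty$ and recall the following facts. First, by Lemma \ref{lem:level.sets_q.i_warped.spaces}, each level set $M\times\{t_n\}\subset \cone_\Gamma(M)$ is $C$-bi-Lipschitz equivalent to $(M,\wdist{t_n})$, with $C$ depending only on the geometry of $M$. Second, choosing maximal $1/3$-separated subsets $Y_{t_n}\subset (M,\wdist{t_n})$ and forming the graphs $\CX_{t_n}$, the inclusion of vertices gives a $(2,1)$-quasi-isometry $\CX_{t_n}\to(M,\wdist{t_n})$. Third, by Lemma \ref{lem:lavish.approx.graph_q.i_Xt}, the inclusion $\widetilde\CG(\CV(Y_{t_n}))\hookrightarrow\CX_{t_n}$ is an $(L,A)$-quasi-isometry with constants depending only on the geometry of $M$.

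Composing these, I obtain a sequence of uniform quasi-isometries between the level sets $M\times\{t_n\}$ and the lavish approximating graphs $\widetilde\CG(\CV(Y_{t_n}))$, with constants independent of $n$. Since the lavish approximating graphs have uniformly bounded degree (as observed in the proof of Lemma \ref{lem:lavish.graph.expanders_iff_measure.expand}, using the doubling property of $M$ and the uniform bound on eccentricities of Voronoi cells), Definition \ref{de:expander.metric.spaces} together with Lemma \ref{lem:unif.qi.preserves.expanders} tells us that the level sets $M\times\{t_n\}$ form a family of expanders if and only if the graphs $\widetilde\CG(\CV(Y_{t_n}))$ do.

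Finally, Lemma \ref{lem:lavish.graph.expanders_iff_measure.expand} says precisely that the family $\widetilde\CG(\CV(Y_{t_n}))$ is a family of expanders if and only if the action $\Gamma\curvearrowright M$ is expanding in measure. Since the `if and only if' on the right-hand side does not depend on the chosen sequence $t_n$, the same holds for any other unbounded sequence, proving the `one (equivalently, every)' part of the statement. No new estimate is really required here; the only mild point to double-check is that all the quasi-isometry constants in the chain are genuinely uniform in $n$, which is immediate from the cited lemmas since each of them yields constants depending only on the geometry of $M$ (and not on $t_n$).
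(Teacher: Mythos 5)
Your proposal is correct and follows essentially the same route as the paper's own proof: chaining the uniform quasi\-/isometries from Lemma \ref{lem:level.sets_q.i_warped.spaces}, the graphs $\CX_{t_n}$, and Lemma \ref{lem:lavish.approx.graph_q.i_Xt}, and then concluding via Lemma \ref{lem:lavish.graph.expanders_iff_measure.expand}. The extra care you take in noting the uniform bound on the degrees of the lavish approximating graphs and the uniformity of the constants is a welcome (if implicit in the paper) clarification, but it does not change the argument.
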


\begin{proof}
 Fix any sequence $t_n$ with $t_n\to\infty$. By Lemma 
\ref{lem:level.sets_q.i_warped.spaces}, the level sets $M\times\{t_n\}$ with the metric 
induced from $\cone_\Gamma (M)$ are all uniformly quasi\=/isometric to the warped metric 
spaces $(M,\wdist{t_n})$. Now, pick a maximal $1/3$\=/separated set 
$Y_{t_n}\subset(M,\wdist{t_n})$ and build the graphs $\CX_{t_n}$ as before. As already 
noted, the spaces $(M,\wdist{t_n})$ are uniformly quasi isometric 
to the graphs $\CX_t$ which in turns are uniformly quasi\=/isometric to the lavish 
approximating graphs $\widetilde G\big(\CV(Y_t)\big)$ by Lemma 
\ref{lem:lavish.approx.graph_q.i_Xt}.
Therefore, the theorem follows from 
Lemma \ref{lem:lavish.graph.expanders_iff_measure.expand}.
\end{proof}

Theorem \ref{thm:intro:warped.cones.exp_iff_meas.exp} can be used to provide restrictions 
to coarse embeddability of warped cones in Hilbert spaces. 

\begin{cor}
\label{cor:warped.cones.dont.coarsely.embed}
 If the action $\Gamma\curvearrowright M$ is expanding in measure, then the warped cone 
$\cone_\Gamma (M)$ does not coarsely embed into any $L^p$ space.
\end{cor}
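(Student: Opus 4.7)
The plan is to argue by contradiction, combining the already established Theorem \ref{thm:intro:warped.cones.exp_iff_meas.exp} with the fact, recalled in the preliminaries and due to Matoušek \cite{Mat97}, that no family of expanders coarsely embeds into any $L^p$ space with $1\leq p<\infty$.

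Suppose, for contradiction, that a coarse embedding $f\colon (\cone_\Gamma(M),\wdist{})\to L^p$ exists, with control functions $\eta_-$ and $\eta_+$. First, I would fix any unbounded sequence $t_n\to\infty$ and consider the restrictions $f_n\coloneqq f|_{M\times\{t_n\}}$. Since the metric on each level set $M\times\{t_n\}$ is by definition the restriction of $\wdist{}$, each $f_n$ is automatically a coarse embedding with the \emph{same} control functions $\eta_-,\eta_+$, so the family of level sets coarsely embeds into $L^p$ in the uniform sense of the paper's definition.

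Next, by Theorem \ref{thm:intro:warped.cones.exp_iff_meas.exp}, the hypothesis that the action is expanding in measure supplies a family of expander graphs $\CX_n$ together with $(L,A)$\=/quasi\=/isometries $q_n\colon \CX_n\to M\times\{t_n\}$ sharing the same constants. I would then form the compositions $f_n\circ q_n\colon \CX_n\to L^p$. A direct unwinding of the definitions shows that these maps are coarse embeddings controlled by the common pair of functions $t\mapsto \eta_+(Lt+A)$ and $t\mapsto \eta_-\bigpar{\max\{0,t/L-A\}}$, both still unbounded and (weakly) increasing. Hence a family of expanders would coarsely embed into $L^p$, contradicting Matoušek's theorem cited in Section \ref{sec:notation.and.preliminaries}.

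No genuine obstacle arises; the entire argument is a clean consequence of Theorem \ref{thm:intro:warped.cones.exp_iff_meas.exp}. The only bookkeeping worth making explicit is the stability of the notion of coarse embedding under (i) restriction to a subspace with the induced metric, and (ii) precomposition with a uniform family of quasi\=/isometries, both of which follow from elementary manipulation of the defining inequalities.
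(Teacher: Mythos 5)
Your argument is correct and is exactly the route the paper intends: the corollary is stated without a written proof precisely because it follows from Theorem \ref{thm:intro:warped.cones.exp_iff_meas.exp} together with Matou\v{s}ek's non-embeddability of expanders, via the two stability observations you make explicit (restriction to level sets and precomposition with uniform quasi-isometries). Nothing is missing.
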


\begin{rmk}
 Corollary \ref{cor:warped.cones.dont.coarsely.embed} was recently proved in 
\cite{NoSa15}. Their result is valid for a larger class of measures but it only applies 
to actions which are measure preserving. 
\end{rmk}

\section{A spectral criterion for measure preserving actions}
\label{sec:spectral.criterion}

Following \cite{BFGM07} we give the following:

\begin{de}
 Let $G$ be a locally compact group and $E$ a Banach space (either real or complex). We 
say that an action by 
linear isometries $\pi\colon G\curvearrowright E$ (continuous with respect to the weak 
operator topology) has \emph{almost invariant vectors} if there exists a sequence 
$v_n\in E\smallsetminus\{0\}$ such that
\[
 \lim_{n\to\infty} \frac{\diam\big(\pi(K) v_n\big)}{\norm{v_n}}=0
\]
for every compact set $K\subseteq G$.
\end{de}

Let $(X,\nu)$ be a probability space and $\rho\colon\Gamma\curvearrowright X$ a measure 
preserving action. As always, we assume that $\Gamma$ is finitely generated and we fix a 
symmetric generating $S$ with $1\in S$. 
For $1\leq p\leq\infty$ we will denote by $L^p(X)$ the Banach space 
of complex valued functions of $X$ with finite $L^p$ norm (we might have used real 
valued functions instead: all of the following results hold in the real case as well). 
The action on $X$ induces a (continuous) left action by affine isometries
$\pi_\rho\colon\Gamma\curvearrowright L^p(X)$ by pre\=/composition: 
$(g\cdot f)(x)\coloneqq f(g^{-1}\cdot x)$. 
% We will simply write $\pi$ when $\rho$ is clear by the context.  

The action $\pi_\rho\colon \Gamma\curvearrowright L^p(X)$ clearly has almost invariant 
vectors because the constant functions are genuine invariant vectors.
The canonical complement of the subspace of constant 
functions in $L^p$ is given by the subspace of functions with zero average
\[
 L^p_0(X)\coloneqq \left\{f\in L^p(X)  \middle | \ \int_X f(x) d\nu(x)=0 \right\}. 
\]
The action of $\Gamma$ preserves $L^p_0(X)$ and we will denote again with $\pi_\rho$ the 
induced action $\pi_\rho\colon \Gamma\curvearrowright L^p_0(X)$.

Notice that the action action $\pi_\rho\colon \Gamma\curvearrowright L^p_0(X)$ has almost 
invariant vectors if and only if 
there exists a sequence $f_n\in L^p_0(X)\smallsetminus \{0\}$ such that
\[
 \lim_{n\to\infty} \frac{\norm{s\cdot f_n -f_n}_p}{\norm{f_n}_p}=0
\]
for every $s\in S$. Equivalently, $\pi_\rho$ does \emph{not} have almost invariant 
vectors 
in $L^p_0(X)$ if and only if there exists a positive constant $\delta>0$ so that 
\[
 \sum_{s\in S}\norm{s\cdot f-f}_p\geq \delta\norm{f}_p
\]
for every function $f\in L^p_0(X)$. In literature, an action with this property is said 
to 
have a \emph{spectral gap} in $L^p_0$.

\begin{lem}
 \label{lem:projecting.doesnt.indrease.Lp.norm}
 For every function $f\in L^p_0(X)$ and every constant $c\in\RR$ we have 
\[
 \norm{\vphantom{\big|}f+c}_p\geq \frac{\norm{f}_p}{2}.
\]
\end{lem}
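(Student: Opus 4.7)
The plan is to use a two-line argument based on the triangle inequality, exploiting that $\int f\,d\nu=0$ forces any additive constant to be controlled by the $L^p$-norm of $f+c$ itself.

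First I would observe that by the triangle inequality in $L^p(X)$ applied to $f=(f+c)-c$, one has
\[
 \norm{f}_p \leq \norm{f+c}_p + \norm{c}_p = \norm{f+c}_p + \abs{c},
\]
since $\nu(X)=1$ so the $L^p$-norm of the constant function $c$ equals $\abs{c}$.

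Next I would bound $\abs{c}$ in terms of $\norm{f+c}_p$. Using $\int f\,d\nu=0$, we have $\int(f+c)\,d\nu=c$, and then by Jensen's inequality (equivalently, by H\"older with $\nu(X)=1$),
\[
 \abs{c} = \abs*{\int_X (f+c)\,d\nu} \leq \int_X \abs{f+c}\,d\nu \leq \norm{f+c}_p.
\]
Combining these two inequalities yields $\norm{f}_p \leq 2\norm{f+c}_p$, which is the desired bound.

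There is no real obstacle here: the proof is a short two-step estimate, and the only place where the hypotheses $f\in L^p_0(X)$ and $\nu(X)=1$ enter is the second inequality, where zero-mean lets us identify $c$ with $\int(f+c)\,d\nu$ and the probability normalization makes $\nu(X)^{1/q}=1$. The statement would fail without both assumptions, so the plan is essentially forced.
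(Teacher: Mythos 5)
Your proof is correct and is essentially identical to the paper's: the paper also writes $g=f+c$, notes that its mean is $c$, bounds $\abs{c}\leq\norm{g}_p$ via Jensen's inequality, and concludes $\norm{f}_p=\norm{g-\nu(g)}_p\leq 2\norm{g}_p$ by the triangle inequality. No differences worth noting.
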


\begin{proof}
 Let $g$ be any function in $L^p(X)$. Applying Jensen inequality we have
 \[
  \abs{\int_X g(x) d\nu(x)}^p\leq \left(\int_X \abs{g(x)} d\nu(x)\right)^p\leq  \int_X \abs{g(x)}^pd\nu(x). 
 \]
Denote by $\nu(g)$ the average $\int_X g(x) d\nu(x)$. Then we have:
\begin{equation}
 \label{eq:Lp.norm.minus.average}
 \norm{\vphantom{\big|} g(x)-\nu(g)}_p
 \leq \norm{\vphantom{\big|}g}_p+\norm{\nu(g)}_p
 \leq 2\norm{\vphantom{\big|}g}_p.
\end{equation}

Now, for any constant $c$ and any $f\in L^2_0$, the average $\nu(f+c)$ is equal to $c$. 
Thus inequality \eqref{eq:Lp.norm.minus.average} reads as
\[
 \norm{\vphantom{\big|}f}_p\leq 2\norm{\vphantom{\big|}f(x)+c}_p.
\]
\end{proof}

\begin{prop}
\label{prop:almost.inv.independent.p.easy}
Let $\Gamma$ be a finitely generated group and $S$ a finite generating set with 
$S=S^{-1}$ and $1\in S$. Let $\rho\colon\Gamma\curvearrowright (X,\nu)$ 
be a measure preserving action on a probability space. Then, for any $1\leq p<\infty$ the 
induced action $\pi_\rho\colon \Gamma\curvearrowright L^p_0$ does not have almost 
invariant vectors if and only $\rho$ is expanding in measure.
\end{prop}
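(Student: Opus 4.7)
The plan is to establish both implications by connecting the $L^p_0$ behaviour to the geometric expansion condition via a Cheeger\=/style layer\=/cake argument.

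For the easy direction, suppose $\rho$ is \emph{not} expanding in measure. Then there exist measurable sets $A_n\subseteq X$ with $\nu(A_n)\leq 1/2$ and $\nu(SA_n\smallsetminus A_n)/\nu(A_n)\to 0$. I set $f_n\coloneqq \mathbf{1}_{A_n}-\nu(A_n)$, which lies in $L^p_0$. Since the action is measure\=/preserving,
\[
 \|sf_n-f_n\|_p^p = \nu(sA_n\triangle A_n)\leq 2\,\nu(SA_n\smallsetminus A_n),
\]
whereas a direct computation gives $\|f_n\|_p^p\geq 2^{-p}\nu(A_n)$. Dividing, $\|sf_n-f_n\|_p/\|f_n\|_p\to 0$ for every $s\in S$, producing almost invariant vectors in $L^p_0$.

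For the converse, suppose $\rho$ is $\alpha$\=/expanding; the aim is to exhibit a positive constant $\delta=\delta(\alpha,p,|S|)$ with $\sum_{s\in S}\|sf-f\|_p\geq\delta\|f\|_p$ for every $f\in L^p_0$. The argument proceeds through three reductions. First, taking real and imaginary parts reduces to real $f$. Second, letting $m$ be a median of $f$ and setting $h\coloneqq f-m$, one has $sh-h=sf-f$ (constants are fixed), while Lemma~\ref{lem:projecting.doesnt.indrease.Lp.norm} gives $\|h\|_p\geq\|f\|_p/2$. Third, writing $h=h^+-h^-$, the positive and negative parts have disjoint supports of measure $\leq 1/2$ (because $m$ is a median). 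The crucial observation is the pointwise identity
\[
 \abs{sh-h}(x) = \abs{sh^+-h^+}(x)+\abs{sh^--h^-}(x)\quad\text{a.e.,}
\]
which follows from a short case analysis: if $(sh^+-h^+)(x)$ and $(sh^--h^-)(x)$ were simultaneously strictly positive (or strictly negative), the disjointness of $\supp(h^+)$ and $\supp(h^-)$ combined with the nonnegativity of $h^\pm$ would force a contradiction. Consequently $\|sh-h\|_p\geq\max\bigl(\|sh^+-h^+\|_p,\|sh^--h^-\|_p\bigr)$, and choosing $g$ to be whichever of $h^\pm$ has the larger $L^p$\=/norm yields $\|g\|_p\geq 2^{-1/p}\|h\|_p$ via $\|h\|_p^p=\|h^+\|_p^p+\|h^-\|_p^p$.

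It remains to show that for any $g\geq 0$ with $\nu(\supp g)\leq 1/2$ one has $\sum_s\|sg-g\|_p\geq c_p\,\alpha\,\|g\|_p$. I bootstrap from the $p=1$ case applied to $G\coloneqq g^p$: the layer\=/cake formula gives $\|sG-G\|_1=\int_0^\infty\nu(sA_t\triangle A_t)\,dt$ for $A_t\coloneqq\{G>t\}$, and since every $A_t$ has measure $\leq 1/2$, the expansion hypothesis combined with $1\in S$ (so that $A_t\subseteq SA_t$) yields $\sum_{s\in S}\nu(sA_t\triangle A_t)\geq 2\alpha\nu(A_t)$; integrating produces $\sum_s\|sG-G\|_1\geq 2\alpha\|g\|_p^p$. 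The elementary inequality $|a^p-b^p|\leq p(a+b)^{p-1}|a-b|$ (valid for $a,b\geq 0,\ p\geq 1$) combined with Hölder's inequality gives $\|sG-G\|_1\leq 2^{p-1}p\,\|g\|_p^{p-1}\|sg-g\|_p$, and dividing yields $\sum_s\|sg-g\|_p\geq 2^{2-p}\alpha\|g\|_p/p$. Chaining the three reductions then produces the desired explicit $\delta$. The main obstacle I foresee is the pointwise identity for $|sh-h|$: without it, relating $\|sh-h\|_p$ to $\|sh^\pm-h^\pm\|_p$ through the triangle inequality would only give estimates in the wrong direction; a secondary subtlety is the Hölder step that propagates the $p=1$ Cheeger bound to general $p$ at the cost of an explicit factor.
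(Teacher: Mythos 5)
Your proof is correct, and the first direction together with the initial reductions (real part, subtracting a median $c$, invoking Lemma \ref{lem:projecting.doesnt.indrease.Lp.norm}, passing to a positive part supported on a set of measure at most $1/2$) coincide with the paper's argument. Where you diverge is the final analytic step. The paper reduces by density to nested step functions $g^+=\sum\beta_i\infun_{B_i}$, introduces the auxiliary functions $h_s=\sum\beta_i\infun_{B_i\cup sB_i}$ satisfying $\abs{s\cdot g^+-g^+}\geq h_s-g^+$ pointwise, and applies the layer\-/cake formula to $h_s^p$ to show that some $s$ gives $\norm{h_s}_p\geq(1+\varepsilon/\abs{S})^{1/p}\norm{g^+}_p$, finishing with the reverse triangle inequality. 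You instead prove the sharp $L^1$ Cheeger\-/type bound for $G=g^p$ directly via the layer\-/cake decomposition of the symmetric differences $\nu(sA_t\triangle A_t)$, and then transfer it to the $L^p$ norm of $g$ using $\abs{a^p-b^p}\leq p(a+b)^{p-1}\abs{a-b}$ and H\"older. Your route avoids the density reduction to simple functions altogether and yields an explicit constant $2^{2-p}\alpha/p$ in one shot; the paper's route avoids the H\"older interpolation but pays with the step\-/function approximation and the auxiliary $h_s$. Two small remarks: your pointwise identity $\abs{sh-h}=\abs{sh^+-h^+}+\abs{sh^--h^-}$ is correct (the case analysis you sketch does rule out both differences having the same strict sign), but it is more than you need --- the inequality $\abs{sh^\pm-h^\pm}\leq\abs{sh-h}$ already follows from the $1$\-/Lipschitz property of $t\mapsto t^\pm$, which is essentially what the paper uses. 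Also, your factor $2\alpha$ in the $L^1$ step uses measure preservation to equate $\nu(sA_t\setminus A_t)$ with $\nu(A_t\setminus sA_t)$; this is available here, but worth flagging since the weaker bound with $\alpha$ would suffice and would not need it.
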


\begin{proof}

If the action is not expanding in measure, then there exists a sequence of measurable 
sets $A_n$ with 
measure $\nu(A_n)\leq 1/2$ and $\nu(S\cdot A_n)/\nu(A_n)\to 1$. Looking at the symmetric 
difference, we have that 
$\nu\big((s\cdot A_n) \triangle A_n\big)/\nu(A_n)\to 0$ for every $s\in S$. Denote by 
$\infun_{A_n}$ the indicator function of the set $A_n$ and let $f_n(x)\coloneqq 
\infun_{A_n}(x)-\nu(A_n)$. The sequence $\{f_n\}_{n\in\NN}$ lies in $L^p_0(X)$ and we have
\[
 \norm{s\cdot f_n-f_n}_p^p
  = \nu(A_n\smallsetminus s\cdot A_n)+\nu(s\cdot A_n\smallsetminus A_n)
  =\nu\big((s\cdot A_n)\triangle A_n\big)
\]
while
\begin{align*}
  \norm{f_n}^p_p &=\nu(A_n)\big(1-\nu(A_n)\big)^p+\big(1-\nu(A_n)\big)\nu(A_n)^p 
	 \vphantom{\frac{1}{2}} \\    
    & \geq  \nu(A_n)\big(1-\nu(A_n)\big)^p \\
    &\geq \frac{1}{2^p}\nu(A_n) .
\end{align*}
It follows that $(f_n)$ is a sequence of almost invariant vectors in $L^p_0$.

For the converse implication, fix any $1\leq p<\infty$. We 
need to show that if $\rho$ is $\varepsilon$\=/expanding then there is a constant 
$\delta>0$ so that for every function $f\in L^p_0(X)$ we have $\sum_{s\in S}\norm{s\cdot 
f-f}_p\geq\delta\norm{f}_p$.
We can restrict our attention to real functions because $\Gamma$ acts separately on the 
real and imaginary parts. By density, it is then enough to prove 
the statement for scale functions of the form
\[
 f(x)=\sum_{i=0}^N \alpha_i\infun_{A_i}(x)
\]
with $\alpha_i\in \RR$ and $A_N\subseteq A_{N-1}\subseteq \cdots A_0$.

There exists a constant $c$ such that both the set $\big\{x\mid f(x)>c\big\}$ and 
$\big\{x\mid f(x)<c\big\}$ have measure smaller or equal than $1/2$. Let $g\coloneqq 
f-c$, then by Lemma \ref{lem:projecting.doesnt.indrease.Lp.norm} we have that 
$\norm{g}_p\geq\norm{f}_p/2$. Changing sign if 
necessary, we may assume that  
$\norm{g^+}_p\geq\frac{1}{4}\norm{f}_p$ where $g^+=\max\{g,0\}$.

Clearly $\norm{s\cdot f-f}_p=\norm{s\cdot g-g}_p\geq\norm{s\cdot g^+-g^+}_p$, thus 
we only need to find a lower bound for the latter. The function $g^+$ is still a scale 
function
\[
 g^+(x)=\sum_{i=0}^n\beta_i\infun_{B_i}(x)
\]
with $B_{i+1}\subseteq B_i$, but this time we can also assume $\beta_i>0$ and 
$\nu(B_i)\leq1/2$ for every $i=0,\ldots,n$.

Since the $B_i$'s are nested, we have
\begin{align*}
 \abs{s\cdot g^+-g^+}(x)
%       &=\abs{\sum_{i=0}^n\beta_i\infun_{B_i}(s^{-1} x)
% 		  -\sum_{i=0}^n\beta_i\infun_{B_i}(x)} \\
      &\geq \sum_{i=0}^n \beta_i\abs{ \vphantom{\big(}
		  \infun_{B_i}(s^{-1} x) - \infun_{B_i}(x)} \\
      &\geq \sum_{i=0}^n \beta_i \big(
		  \infun_{B_i\cup sB_i}(x) - \infun_{B_i}(x) \big) \\
      &= h_s(x)-g^+(x)
\end{align*}
where
\[
 h_s(x)\coloneqq \sum_{i=0}^n \beta_i \infun_{B_i\cup sB_i}(x).
\]

Note that
\[
 \sum_{s\in S}\norm{h_s}_p^p
      =\int_{\RR^+}\sum_{s\in S}\nu\big(\{x\mid h_s(x)^p\geq r\}\big)dr.
\]
Since the action is expanding we have
\[
 \sum_{s\in S}\nu\big(\{x\mid h_s(x)^p\geq r\}\big)
    \geq \big(\abs{S}+\varepsilon\big)\nu\big(\{x\mid g^+(x)^p\geq r\}\big);
\]
thus we get:
\begin{align*}
 \sum_{s\in S}\norm{h_s}_p^p
      \geq\int_{\RR^+}\big(\abs{S}+\varepsilon\big)\nu\big(\{x\mid g^+(x)^p\geq 
r\}\big)dr =\vphantom{\int}\big(\abs{S}+\varepsilon\big)\norm{g^+}_p^p
\end{align*}
whence we deduce that there exists $s\in S$ such that $\norm{h_s}_p\geq 
\big(1+\varepsilon/\abs{S}\big)^{1/p}\norm{g^+}_p$.

Let $\delta'\coloneqq \big(1+\varepsilon/\abs{S}\big)^{1/p} -1$, then for the same $s\in 
S$ we have
\[
 \norm{s\cdot g^+-g^+}_p\geq \norm{h_s-g^+}_p\geq \norm{h_s}_p-\norm{g^+}_p\geq 
\delta'\norm{g^+}_p.
\]
and the Lemma follows because
\[
 \norm{s\cdot f-f}_p\geq \norm{s\cdot g^+-g^+}_p\geq \delta'\norm{g^+}_p \geq 
\frac{\delta'}{4}\norm{f}_p.
\]
\end{proof}

Combining Proposition \ref{prop:almost.inv.independent.p.easy} with Theorem 
\ref{thm:action.on.mfld.exp_iff_approx.graph.expand} we obtain the following:

\begin{cor}
\label{cor:warp.cones.exp_iff_spec.gap}
Let $\Gamma=\angles{S}$ act by quasi\=/symmetric homeomorphisms on a compact Riemannian 
manifold $(M,\varrho)$ and assume that the action preserves the Riemannian measure. Then 
one (any) unbounded sequence of level sets of $\cone_\Gamma (M)$ forms a family of 
expanders if and only if the action has a spectral gap.
\end{cor}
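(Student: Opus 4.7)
The plan is to chain together two equivalences that are already established earlier in the paper, so the proof is essentially bookkeeping.

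First I would verify that the hypotheses of Theorem \ref{thm:intro:warped.cones.exp_iff_meas.exp} (the warped-cone expander characterisation proved just before Corollary \ref{cor:warped.cones.dont.coarsely.embed}) are met. The action is quasi-symmetric by assumption; since it preserves the Riemannian measure, it trivially has bounded measure distortion (with constant $\Theta=1$). Hence that theorem applies and yields the equivalence
\[
 \text{an (equiv.\ every) unbounded sequence of level sets of }\cone_\Gamma(M)\text{ forms a family of expanders}
 \iff \Gamma\curvearrowright M\text{ is expanding in measure.}
\]

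Second, I would translate the right-hand side into a spectral statement. Because $M$ is a compact Riemannian manifold with the normalised Riemannian volume and $\Gamma$ acts measure-preservingly, we are in the setting of Proposition \ref{prop:almost.inv.independent.p.easy}. Taking $p=2$, that proposition gives: the action $\rho$ is expanding in measure if and only if the induced unitary representation $\pi_\rho\colon\Gamma\curvearrowright L^2_0(M)$ has no almost invariant vectors, which is exactly the definition of the representation having a spectral gap with respect to the finite generating set $S$.

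Chaining the two equivalences, an unbounded sequence of level sets of $\cone_\Gamma(M)$ forms a family of expanders if and only if $\Gamma\curvearrowright M$ has a spectral gap. There is no real obstacle in this argument, as both input results have already been established. The only minor care needed is to confirm that Proposition \ref{prop:almost.inv.independent.p.easy} applies to the Riemannian measure (it does, once we rescale the total volume to $1$, which does not affect any of the relevant notions) and to recall that the standard notion of ``spectral gap'' for a measure-preserving action is precisely the absence of almost invariant vectors in $L^2_0$.
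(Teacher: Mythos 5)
Your proof is correct and follows essentially the same route as the paper, which states this corollary precisely as the combination of Proposition \ref{prop:almost.inv.independent.p.easy} (with $p=2$) and the warped-cone characterisation of expansion in measure; your observations that measure preservation gives measure distortion $\Theta=1$ and that normalising the Riemannian volume is harmless are exactly the bookkeeping the paper leaves implicit.
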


\begin{rmk}
 The proof of Proposition \ref{prop:almost.inv.independent.p.easy} provides explicit bounds on the expansion constant $\varepsilon$ in terms of the spectral gap constant $\delta$ and viceversa.
\end{rmk}

\begin{rmk}
 Let $\Gamma_i$ be a sequence of finite index subgroups of $\Gamma=\angles{S}$ with increasing index. Endow the set of left cosets $\Gamma/\Gamma_i$ with the uniform probability measure. Then $\Gamma$ acts on $\Gamma/\Gamma_i$ by multiplication and by considering the complete partition we clearly have that the approximating graphs are actually equal to the Schreier graphs of $\Gamma/\Gamma_i$ with respect to the set $S$. 
 Considering the sequence of actions $\rho_i\colon\Gamma\curvearrowright\Gamma/\Gamma_i$, it follows from Proposition \ref{prop:almost.inv.independent.p.easy} and Lemma \ref{lem:meas.exp___cheeg.const} that the approximating graphs are expanders if and only if all those actions have a uniform spectral gap; \emph{i.e.} we recovered the well\=/known fact that the Schreier graphs form a family of expanders if and only if $\Gamma$ has property $(\tau)$ with respect to the sequence of $\Gamma_i$. 
 
 Alternatively, if the groups $\Gamma_i$ are nested, the Schreier graphs can be obtained as graphs approximating the action on the profinite limit $\varprojlim \Gamma/\Gamma_i$ equipped with the probability measure assigning to a $\Gamma_i$\=/cosets (seen as a subset of $\varprojlim \Gamma/\Gamma_i$) probability $1/[\Gamma:\Gamma_i]$. Expansion can hence by checked by studing the action $\Gamma\curvearrowright \varprojlim \Gamma/\Gamma_i$.
\end{rmk}

\begin{rmk}
 It follows from Proposition \ref{prop:almost.inv.independent.p.easy} that the existence 
of almost invariant vectors for the action $\pi_\rho$ does not depend on what $1\leq 
p<\infty$ is being considered. This was proved in the more general setting of 
measure\=/class preserving actions of topological groups in \cite[Remark 4.3]{BFGM07}. 
They state it for real $L^p$ spaces, but their proof works in the complex case as well.
\end{rmk}

\begin{rmk}
 Proposition \ref{prop:almost.inv.independent.p.easy} also implies that a measure 
preserving action $\Gamma\curvearrowright X$ is expanding if and only if there exists a 
unique invariant mean on $L^\infty(X)$ (see \cite{Ros81} and \cite{Sch81}). Therefore, 
the expansion in measure is 
equivalent to a positive answer to the Ruziewicz problem for this action. 
\end{rmk}

\section{A source of expanding actions: subgroups generated by Kazhdan sets}
\label{sec:finite.Kazhdan.sets}

In this section we will use the spectral criterion from Section 
\ref{sec:spectral.criterion} to link expanding actions to the well developed machinery 
of Kazhdan's property (T). Such link proves to be an invaluable tool in constructing 
explicit examples of expanding actions and it also allows us to reformulate some 
classical theorems and conjectures. 

We begin by recalling some definitions. Here $G$ will always be a locally compact second 
countable Hausdorff 
topological group and we will only consider its continuous unitary representations. Most 
of the tools we use can be found in \cite{Sha00} and \cite{BHV08}.

\begin{de}
\label{de:Kazhdan_pairs}
 Let $G$ be a locally compact topological group. Let $K\subseteq G$ be a compact subset 
and $\varepsilon>0$ a constant. If $\CH$ is a (complex) Hilbert space and 
$\pi\colon G\to U(\CH)$ is a unitary representation, a 
\emph{$(K,\varepsilon)$\=/invariant vector} is a vector $v\in \CH$ such that 
$\norm{\pi(g)v-v}\leq\varepsilon\norm{v}$ for every $g\in K$.

Given a family $\CF$ of unitary representations of $G$, we say that a pair 
$(K,\varepsilon)$ is a \emph{Kazhdan pair} for $\CF$ if every representation $\pi\in\CF$ 
does not have non\=/zero $(K,\varepsilon)$\=/invariant vectors. 
When this is the case, $K$ (resp. $\varepsilon$) 
is a \emph{Kazhdan set} (resp. a \emph{Kazhdan constant}) for $\CF$.
\end{de}

It is easy to verify that a representation $\pi\colon G\to U(\CH)$ admits a Kazhdan pair 
if and only if it does not admit a sequence of almost invariant vectors. 
One can also show that if a family $\CF$ of unitary representations of $G$ admits a 
Kazhdan pair $(K,\varepsilon)$ and $K'$ is any compact generating set for the group $G$, 
then then also $K'$ is a Kazhdan set for $\CF$ (for an appropriate Kazhdan constant 
$\varepsilon'$. See \cite[Chapter 1]{BHV08}).

\begin{de}
A group $G$ is said to have \emph{property (T)} if the family $\CU_0$ of all continuous 
unitary representations without non\=/trivial invariant vectors admits a Kazhdan pair 
$(K,\varepsilon)$. Such pair $(K,\varepsilon)$ (resp. set, constant) is a \emph{Kazhdan 
pair} (resp. \emph{set, constant}) \emph{of 
the group $G$}. In the remainder, when we say that a set $K$ is a Kazhdan set without 
specifying any family of representations we mean that $K$ is a Kazhdan set of the group.
\end{de}

\begin{rmk}
\label{rmk:Kazhdan.set_iff_kazhdan.foreach.rep}
 If a compact subset $K\subseteq G$ is a Kazhdan set for every unitary 
$G$\=/representation $\pi$ with no invariant vector, then it is a Kazhdan set of $G$. 
Indeed, if $\varepsilon_\pi>0$ is the largest constant such that $(K,\varepsilon_\pi)$ is 
a Kazhdan pair for $\pi$, then there must exist a $\varepsilon>0$ such that 
$\varepsilon_\pi\geq\varepsilon$ for every $\pi$; otherwise one would get a contradiction 
by considering the direct sum of a sequence of representations $\pi_n\in\CU_0$ with 
$\varepsilon_{\pi_n}\to 0$.
\end{rmk}

Recall that the \emph{diagonal matrix coefficients} of a unitary 
representation $\pi\colon G\to \U(\CH)$ are the complex functions on $G$ sending
$g\mapsto\angles{\pi(g)v,v}$ where $v$ is any fixed vector in $\CH$. Given two unitary 
representations $\pi,\pi'$ of $G$, $\pi'$ is \emph{weakly contained in $\pi$} (denoted 
$\pi'\prec\pi$) if every diagonal matrix coefficient of $\pi'$ can be approximated 
uniformly on compact sets by convex combinations of matrix coefficients of $\pi$.

Let $I_G$ denote the trivial representation of $G$. Then it is easy to check that a unitary
representation $\pi$ admits a Kazhdan pair if and only if $\pi$ does \emph{not} weakly 
contain the trivial representation ($I_G\nprec \pi$). In particular, a group $G$ has 
property (T) if and only if every unitary representation $\pi$ weakly containing $I_G$ 
has non\=/trivial invariant vectors (\emph{i.e.} it contains $I_G$ as a 
subrepresentation).

Finally, given a probability measure $\mu$ on $G$ and a unitary 
representation $\pi\colon G\to \U(\CH)$, the \emph{$\mu$\=/convolution operator} 
$\pi(\mu)\colon \CH\to\CH$ is defined as the operator such that
\[
 \angles{\pi(\mu)v,u}= \int_G \angles{\pi(g)v,u}d\mu
\]
for every $v,u\in\CH$.

Note that since $\mu$ is a probability measure $\norm{\pi(\mu)}\leq 1$. Moreover, 
if $S\subset G$ is a finite symmetric set with $1\in S$, and $\mu_S\coloneqq 
\frac{1}{\abs{S}}\sum_{s\in S} \delta_s$ is the probability measure equidistributed on 
$S$, then one can show using elementary spectral 
theory that the representation $\pi$ has a spectral gap (in the sense of 
Section \ref{sec:spectral.criterion}) if and only if $\norm{\pi(\mu_S)}<1$ and this is 
equivalent to $r_{sp}\pi(\mu_S)<1$, where $r_{sp}\pi(\mu_S)$ is the \emph{spectral 
radius} of the operator $\pi(\mu_S)$. In particular, $\norm{\pi(\mu_S)}<1$ if and only if 
$S$ is a Kazhdan set for $\pi$.

\

We can now state the motivating result of this section and explain some of its 
applications. Given a finite subset $S\subseteq G$ that is 
symmetric and contains the identity, denote by $\Gamma\coloneqq\angles{S}$ the  
subgroup of $G$ generated by $S$. If $(X,\nu)$ is a probability space and $\rho\colon 
G\curvearrowright X$ is a measure preserving action, we can investigate expansion 
properties of the restriction of $\rho$ to $\Gamma$ and we obtain the following:

\begin{prop} 
\label{prop:meas.pres.action.expand_iff_Kazhdan.couple}
 The restriction $\rho|_\Gamma\colon \Gamma\curvearrowright (X,\nu)$ is expanding in 
measure if and only if there exists 
a constant $\varepsilon>0$ such that $(S,\varepsilon)$ is a Kazhdan pair for the 
representation $\pi_\rho\colon G\curvearrowright L^2_0(X)$.
\end{prop}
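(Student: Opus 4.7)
The statement is essentially a translation exercise between two equivalent formulations: the existence of a Kazhdan pair $(S,\varepsilon)$ for the $G$-representation $\pi_\rho$, and the absence of almost invariant vectors for the restricted $\Gamma$-representation $\pi_\rho|_\Gamma$ on $L^2_0(X)$. The plan is to bridge this with Proposition \ref{prop:almost.inv.independent.p.easy} applied with $p=2$.

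First I would observe that, although $\pi_\rho$ is a $G$-representation, the Kazhdan pair condition $(S,\varepsilon)$ only involves values of $\pi_\rho$ on $S\subset G$, and hence is purely a statement about the $\Gamma$-action. Explicitly, $(S,\varepsilon)$ is a Kazhdan pair for $\pi_\rho$ if and only if every nonzero $v\in L^2_0(X)$ admits some $s\in S$ with $\norm{\pi_\rho(s)v-v}>\varepsilon\norm{v}$, or equivalently
\[
\sum_{s\in S}\norm{\pi_\rho(s)v-v}\geq \varepsilon\norm{v}
\qquad \text{for all } v\in L^2_0(X),
\]
possibly after shrinking $\varepsilon$; this is precisely the spectral-gap condition for $\pi_\rho|_\Gamma$ discussed in Section \ref{sec:spectral.criterion}.

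For the forward direction, assume $\rho|_\Gamma$ is expanding in measure. By Proposition \ref{prop:almost.inv.independent.p.easy} applied with $p=2$, $\pi_\rho|_\Gamma$ has no almost invariant vectors, so there exists $\delta>0$ with $\sum_{s\in S}\norm{\pi_\rho(s)f-f}_2\geq\delta\norm{f}_2$ for every $f\in L^2_0(X)$. Taking $\varepsilon\coloneqq \delta/\abs{S}$, a standard pigeonhole argument shows that no nonzero $v\in L^2_0(X)$ can be $(S,\varepsilon)$-invariant, so $(S,\varepsilon)$ is a Kazhdan pair for $\pi_\rho$.

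For the converse, suppose $(S,\varepsilon)$ is a Kazhdan pair for $\pi_\rho$ for some $\varepsilon>0$. Then for every $f\in L^2_0(X)\smallsetminus\{0\}$ some $s\in S$ satisfies $\norm{\pi_\rho(s)f-f}_2>\varepsilon\norm{f}_2$, so in particular $\sum_{s\in S}\norm{\pi_\rho(s)f-f}_2>\varepsilon\norm{f}_2$. Hence $\pi_\rho|_\Gamma$ admits no sequence of almost invariant vectors in $L^2_0(X)$, and another appeal to Proposition \ref{prop:almost.inv.independent.p.easy} with $p=2$ gives that $\rho|_\Gamma$ is expanding in measure.

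There is essentially no hard step: the bulk of the work has been absorbed into Proposition \ref{prop:almost.inv.independent.p.easy}. The only thing to keep straight is that the Kazhdan pair is defined for the ambient $G$-representation but tested only on $S$, which matches the Hilbert-space spectral-gap characterization of expansion for the subgroup $\Gamma=\angles{S}$.
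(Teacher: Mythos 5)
Your proposal is correct and follows essentially the same route as the paper: both reduce to Proposition \ref{prop:almost.inv.independent.p.easy} with $p=2$ and then translate between ``no almost invariant vectors'' and the Kazhdan pair condition on $S$, noting that the latter only tests $\pi_\rho$ on $S\subseteq\Gamma$ and hence is insensitive to whether one views the representation as a $G$- or a $\Gamma$-representation. The only cosmetic difference is that you carry out the translation directly by pigeonhole between the summed and pointwise conditions over $S$, where the paper instead invokes the general fact that any generating set of $\Gamma$ serves as a Kazhdan set once some Kazhdan pair exists.
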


\begin{proof}
By Proposition \ref{prop:almost.inv.independent.p.easy} we know that $\rho|_\Gamma$ is 
expanding in measure if and only if $\pi_{\rho|\Gamma}\colon {\Gamma}\curvearrowright 
L^2_0(X)$ does not have almost invariant vectors. That is, if and only if 
$\pi_{\rho|\Gamma}$ admits a Kazhdan pair $(K,\varepsilon)$. 
Since $S$ is a finite generating set of $\Gamma$, 
% by Lemma \ref{lem:gen.sets.are.Kazhdan} we deduce that if 
such a Kazhdan pair exists if and only if there exists a constant $\varepsilon'>0$ 
so that $(S,\varepsilon')$ is itself a Kazhdan pair for $\pi_{\rho|\Gamma}$.

Now, since $\pi_{\rho|\Gamma}=(\pi_\rho)|_\Gamma$ and $S\subseteq \Gamma$, we have have 
that 
$(S,\varepsilon')$ is a Kazhdan pair for $\pi_{\rho|\Gamma}$ if and only if it is a 
Kazhdan pair for $\pi_\rho$ as well.
\end{proof}

\begin{rmk}
When $(S,\varepsilon)$ is a Kazhdan pair for $\pi_\rho$, one can retrieve 
explicit bounds on the expansion constant of $\rho$ in term of the Kazhdan constant 
$\varepsilon$ and vice versa following the proof of 
Proposition \ref{prop:almost.inv.independent.p.easy}.

\end{rmk}

In the rest of this section we describe some consequences of Proposition 
\ref{prop:meas.pres.action.expand_iff_Kazhdan.couple} (we refer the reader to 
\cite[Section 2]{CoGu11} for more examples of actions of finitely generated groups on 
measure spaces that have a spectral gap). 

\

{\bf A characterisation of Kazhdan sets.}
Schmidt, Connes and Weiss characterised groups with Kazhdan's property (T) in term of 
their ergodic actions. Specifically, they proved that $G$ has property (T) 
if and only if for every measure preserving ergodic action on a probability space 
$\rho\colon G\curvearrowright (X,\nu)$ the induced unitary representation $\pi_\rho\colon 
G\curvearrowright L^2_0(X)$ admits a Kazhdan pair. 

Using Remark 
\ref{rmk:Kazhdan.set_iff_kazhdan.foreach.rep}, one can adapt the proof of the 
Schmidt\=/Connes\=/Weiss theorem given in \cite[Theorem 6.3.4]{BHV08} 
to prove the following more precise statement: a compact subset $K\subseteq G$ is a 
Kazhdan set of $G$ if and only if for every ergodic action $\rho$ of $G$ the set $K$ is a 
Kazhdan set for the representation $\pi_\rho$.
Therefore, Proposition 
\ref{prop:meas.pres.action.expand_iff_Kazhdan.couple} implies the following:

\begin{thm}
\label{thm:finite.Kazhdans_iff_every.action.expanding}
Let $S\subset G$ be a finite symmetric set containing the identity and let 
$\Gamma\coloneqq\angles{S}\subset G$.
Then, $S$ is a Kazhdan set of $G$ if and only if the restriction to $\Gamma$ of every 
ergodic action $\rho\colon G\curvearrowright (X,\nu)$ is expanding in measure. 

Moreover, when $S$ is a Kazhdan set of $G$, all the $\Gamma$\=/actions obtained as 
restrictions of ergodic $G$\=/actions share a lower bound on their expansion 
constants depending only on the Kazhdan constant of $S$ in $G$.
\end{thm}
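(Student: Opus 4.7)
The plan is to chain together two equivalences that have already been established. The first is the refined form of the Schmidt-Connes-Weiss theorem stated in the text just above the theorem: a compact subset $K\subseteq G$ is a Kazhdan set of $G$ if and only if, for every ergodic measure preserving action $\rho\colon G\curvearrowright (X,\nu)$, the set $K$ is a Kazhdan set for the induced representation $\pi_\rho\colon G\curvearrowright L^2_0(X)$. The second is Proposition \ref{prop:meas.pres.action.expand_iff_Kazhdan.couple}, which asserts that for a fixed $\rho$ the restriction $\rho|_\Gamma$ is expanding in measure if and only if $(S,\varepsilon)$ is a Kazhdan pair for $\pi_\rho$ for some $\varepsilon>0$. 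Specialising $K=S$ in the first equivalence and combining with the second yields the desired biconditional.

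For the easier direction, together with the \emph{moreover} clause, I would argue quantitatively. If $(S,\varepsilon)$ is a Kazhdan pair of $G$ and $\rho$ is ergodic, then $\pi_\rho$ has no nonzero invariant vectors (ergodicity means the constants are the only invariant functions, and they have been quotiented out in passing to $L^2_0$), so $(S,\varepsilon)$ is automatically a Kazhdan pair for $\pi_\rho$ with the \emph{same} $\varepsilon$, independently of $\rho$. The quantitative content of Proposition \ref{prop:meas.pres.action.expand_iff_Kazhdan.couple} (highlighted in the remark following its proof, which traces back to the proof of Proposition \ref{prop:almost.inv.independent.p.easy}) then produces an explicit lower bound on the expansion constant of $\rho|_\Gamma$ in terms of $\varepsilon$ and $\abs{S}$ only. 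This uniform bound is precisely what the \emph{moreover} clause asks for.

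For the converse, assume every ergodic restriction $\rho|_\Gamma$ is expanding. Proposition \ref{prop:meas.pres.action.expand_iff_Kazhdan.couple} then supplies, for each ergodic $\rho$, some $\varepsilon_\rho>0$ such that $(S,\varepsilon_\rho)$ is a Kazhdan pair for $\pi_\rho$. Invoking the substantive half of the refined \cite[Theorem 6.3.4]{BHV08} concludes that $S$ is a Kazhdan set of $G$; alternatively, one can bypass the ergodicity assumption at the end by using Remark \ref{rmk:Kazhdan.set_iff_kazhdan.foreach.rep} to reduce showing $S$ is Kazhdan for $G$ to showing it is Kazhdan for every $\pi\in\CU_0$, and then use the Gaussian construction from BHV08 to embed any hypothetical counterexample $\pi$ into some $\pi_\rho$ with $\rho$ ergodic.

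The main obstacle is precisely the adaptation of \cite[Theorem 6.3.4]{BHV08}. The classical Schmidt-Connes-Weiss statement only asserts the \emph{existence} of some Kazhdan pair for $G$, but here one needs to track the specific compact set $S$ through the construction: starting from a unit vector in some $\pi\in\CU_0$ that is $(S,\varepsilon)$-invariant, one must build an ergodic measure preserving action $\rho$ whose associated $\pi_\rho$ exhibits $S$-almost-invariance of comparable quality (as opposed to almost-invariance under some larger or different compact set). The BHV08 construction does preserve the group element acting, so the adaptation is essentially bookkeeping, but it must be checked that the Gaussianisation procedure does not smear the almost-invariance over a neighbourhood of $S$ larger than $S$ itself. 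Once that bookkeeping is in place, the chain of equivalences closes and the theorem follows.
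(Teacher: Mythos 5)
Your proposal is correct and follows essentially the same route as the paper: the paper likewise obtains the theorem by combining the refined Schmidt--Connes--Weiss statement (adapted from \cite[Theorem 6.3.4]{BHV08} via Remark \ref{rmk:Kazhdan.set_iff_kazhdan.foreach.rep} to track the specific set $S$) with Proposition \ref{prop:meas.pres.action.expand_iff_Kazhdan.couple}, and the uniformity in the \emph{moreover} clause comes, as you say, from the Kazhdan constant of $S$ in $G$ being independent of $\rho$ together with the explicit bounds in the proof of Proposition \ref{prop:almost.inv.independent.p.easy}.
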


\

{\bf Non\=/compact Lie groups.} In the setting of non\=/compact Lie groups, the work of 
Y.Shalom provides very general means of proving spectral gap properties. In particular, 
he proved the following. 

\begin{thm}[\cite{Sha00}, Theorem C]
\label{thm:shalom.theorem.C}
 Let $G=\prod G_i$ be a semisimple Lie group with finite centre and let $\pi$ be a 
unitary $G$\=/representation and $\mu$ a probability measure on $G$. If either of the 
following is true:
\begin{enumerate}[a$)$]
 \item $I_G\nprec\pi|_{G_i}$ for every $G_i$ and $\mu$ is not supported on a closed 
amenable subgroup of $G$, 
 \item $I_G\nprec \pi$ and for every $i$ the projection of $\mu$ on $G_i$ is not 
supported on a closed amenable subgroup of $G_i$,
\end{enumerate}
then the operator $\mu(\pi)$ has spectral radius strictly less then one: 
$r_{sp}\pi(\mu)<1$.
\end{thm}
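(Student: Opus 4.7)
I would argue by contrapositive, showing that $r_{sp}\pi(\mu)=1$ forces $\mu$ (or its factor-projections) to be supported on a closed amenable subgroup. The overall strategy factors into three conceptual steps: a spectral-theoretic reduction, an extraction of weak containment on a subgroup, and a structural argument on semisimple Lie groups.

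\emph{Step 1: Reduction to a positive self-adjoint contraction.} Let $\check\mu$ denote the pushforward of $\mu$ under $g\mapsto g^{-1}$ and set $\tilde\mu \coloneqq \mu*\check\mu$. Then $\pi(\tilde\mu)=\pi(\mu)\pi(\mu)^{*}$ is a positive self-adjoint contraction, so its norm equals $\|\pi(\mu)\|^{2}$ and coincides with its spectral radius. A short functional calculus argument (applied to $\pi(\mu)^{*}\pi(\mu)$) yields $r_{sp}\pi(\mu)=1 \Leftrightarrow \|\pi(\tilde\mu)\|=1$, which in turn produces unit vectors $v_n$ with $\pi(\tilde\mu)v_n\to v_n$ in norm.

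\emph{Step 2: Propagation to a weak containment on a subgroup.} Form the diagonal matrix coefficients $\phi_n(g)\coloneqq \angles{\pi(g)v_n}{v_n}$. These are normalized positive-definite functions with $|\phi_n|\leq 1$ and $\int\phi_n\,d\tilde\mu\to 1$, hence $\phi_n\to 1$ in $L^{1}(\tilde\mu)$. A weak-$*$ accumulation point $\phi\in L^{\infty}(G)$ is positive-definite and satisfies $\phi\equiv 1$ $\tilde\mu$-almost everywhere. The Cauchy--Schwarz-type inequality $|\phi(gh)-\phi(g)\phi(h)|^{2}\leq (1-|\phi(g)|^{2})(1-|\phi(h)|^{2})$ then forces $\phi\equiv 1$ on the closed subgroup $H\coloneqq \overline{\angles{\supp\mu}}$, and by the GNS construction this is precisely the statement $I_H\prec \pi|_H$.

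\emph{Step 3: Structural conclusion via Mautner--Howe-Moore.} This is the substantive part. The Howe-Moore theorem guarantees that any irreducible unitary $G_i$-representation without invariant vectors has matrix coefficients vanishing at infinity on $G_i$. Combined with the Mautner phenomenon — which upgrades invariance under a non-trivial unipotent to invariance under the ambient simple factor — one shows that in case (a) the subgroup $H$ must be amenable in $G$, otherwise enough unipotents in $H$ would propagate almost invariance to some $G_i$, contradicting $I_G\nprec \pi|_{G_i}$. In case (b) the same mechanism is applied factor-wise to the closed projections $\overline{p_i(H)}<G_i$, forcing each to be amenable; the stronger representation-theoretic hypothesis $I_G\nprec \pi$ is what permits the factor-by-factor argument despite $H$ itself possibly being non-amenable.

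\emph{Main obstacle.} The conversion in Step 3 of the weak-containment statement $I_H\prec \pi|_H$ into amenability of $H$ (or of its projections $\overline{p_i(H)}$) is where all the genuine work lies, requiring the full Howe-Moore/Mautner machinery and the classification of closed amenable subgroups of semisimple Lie groups as (essentially) compact extensions of subgroups of parabolics. The asymmetric hypotheses in (a) versus (b) reflect precisely the fact that $H$ may have amenable projections onto every factor without being amenable itself, so the two cases cannot be treated uniformly.
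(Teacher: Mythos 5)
The first thing to note is that the paper contains no proof of this statement: it is quoted verbatim as Theorem C of \cite{Sha00} and used as a black box, so there is no internal argument to compare yours against. Judged on its own terms, your sketch does follow the general architecture of the known proofs of such results (contrapositive; extraction of almost invariant vectors from a spectral condition on the Markov operator; Howe--Moore and the Mautner phenomenon plus the structure of closed amenable subgroups of semisimple groups to convert weak containment into amenability), and the division of labour you describe between cases a) and b) is the right one.

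However, Step 2 has a genuine gap. The Krein inequality $\abs{\phi(gh)-\phi(g)\phi(h)}^2\leq(1-\abs{\phi(g)}^2)(1-\abs{\phi(h)}^2)$ shows that $\{g\mid\phi(g)=1\}$ is a closed subgroup containing $\supp\tilde\mu$, hence containing the closed subgroup generated by $\supp\mu\cdot(\supp\mu)^{-1}$ --- but not, in general, the closed subgroup $H$ generated by $\supp\mu$ itself. These can differ: for $\mu=\delta_g$ one has $\tilde\mu=\delta_e$, the argument yields nothing, and indeed $r_{sp}\pi(\delta_g)=1$ always, so the theorem survives in that case only because $\overline{\langle g\rangle}$ is amenable. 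More generally $\supp\mu$ may lie in a single coset of a proper closed normal subgroup, and passing from amenability of $\overline{\langle\supp\mu\cdot(\supp\mu)^{-1}\rangle}$ to a closed amenable subgroup containing $\supp\mu$ needs an extra argument; this is the ``aperiodicity'' issue familiar from Kesten\-/type theorems, and it is precisely where working with the spectral radius, i.e.\ with all convolution powers $\mu^{*n}$, rather than with $\norm{\pi(\mu)}$ alone, earns its keep. Your Step 1 discards that information by reducing immediately to $\norm{\pi(\tilde\mu)}$ (note also that the asserted equivalence $r_{sp}\pi(\mu)=1\Leftrightarrow\norm{\pi(\tilde\mu)}=1$ holds only in the direction you actually use, since $\pi(\mu)$ need not be normal). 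Two smaller caveats: a weak\=/$*$ accumulation point of the $\phi_n$ lives in $L^\infty$ of Haar measure, so the statement ``$\phi\equiv1$ $\tilde\mu$\=/a.e.'' is not meaningful when $\tilde\mu$ is singular --- one should instead argue directly with the vectors, extracting $\norm{\pi(g)v_n-v_n}\to0$ for $\tilde\mu$\=/a.e.\ $g$; and Step 3, which you correctly identify as the substantive part, is left entirely at the level of a plan, which is acceptable for a sketch but is where essentially all of \cite{Sha00}'s work lies.
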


As a corollary one can produce a multitude of examples of expanding actions. Indeed, let
$\rho\colon G\curvearrowright(X,\nu)$ be a measure preserving action of a semisimple Lie 
group with finite centre and let $\pi_\rho\colon G\curvearrowright 
L^2_0(X)$ be the induced unitary representation. If $I_G\nprec\pi|_{G_i}$ for every $G_i$ 
(resp. $I_G\nprec \pi$) and $S$ is any finite symmetric set containing the identity 
and so that the closure of 
$\Gamma\coloneqq\angles{S}$ in $G$ is not amenable (resp. the closures of the projections 
of $\Gamma$ to the $G_i$'s are 
not amenable), then the restriction $\rho|_\Gamma\colon \Gamma\curvearrowright (X,\mu)$ 
is expanding in measure by Proposition 
\ref{prop:meas.pres.action.expand_iff_Kazhdan.couple}.

For example, if a simple Lie group $G$ has Kazhdan property (T) and finite 
centre\note{can the centre be infinite?} 
and $\rho\colon G\curvearrowright (X,\nu)$ is any measure preserving ergodic 
action then $I_G\nprec\pi|_{G}$. If $S$ 
generates a discrete subgroup $\Gamma<G$, then $\rho|_\Gamma$ is expanding as soon as 
$\Gamma$ is not amenable (\emph{i.e.} as soon as $\Gamma$ contains a non\=/abelian free 
subgroup). 
A typical example of ergodic action of $G$ is the action by left multiplication 
$G\curvearrowright G/\Lambda$ where $\Lambda<G$ is a lattice. More generally, if $G<G'$ 
where $G'$ is a finite product of connected, non compact, simple Lie groups with finite 
centre and $\Lambda$ is any irreducible lattice of $G'$, then Moore's Ergodicity Theorem 
implies that the action by left multiplication $G\curvearrowright G'/\Lambda$ is ergodic 
if and only if the closure of $G$ in $G'$ is not compact.

\begin{rmk}
Any right invariant Riemannian metric on $G'$ descends 
to a Riemannian metric on $G'/\Lambda$ whose volume form is (a multiple of) the 
restriction of the Haar 
measure and the action on the left $G'\curvearrowright G'/\Lambda$ is by bilipschitz 
diffeomorphisms. In the example above, it follows that if the irreducible lattice 
$\Lambda$ is also uniform, then one 
can consider the warped cone $\cone(G'/\Lambda)$ and by Corollary 
\ref{cor:warp.cones.exp_iff_spec.gap} any unbounded sequence of its level sets forms a 
family of expanders.
\end{rmk}

\begin{rmk}
 In the above example, we assumed $G$ to have property (T), but the subgroup $\Gamma$ 
does not need to have it (nor does $G'$). Indeed, taking $\Gamma$ to 
be any discrete non\=/abelian free group would do. 
This is a very interesting feature, as it allows us to build expanders out of actions of 
free groups and, more generally, of a\=/T\=/menable groups. Moreover, we can produce 
examples of warped cones that do not coarsely embed into 
Hilbert spaces even if the warping group $\Gamma$ has Haagerup property.
\end{rmk}

In the same paper, Shalom constructed explicitly finite Kazhdan sets for algebraic 
groups and he was also able to compute their Kazhdan constants \cite[Theorem A]{Sha00}. 
More precisely, he finds Kazhdan sets of $m$ elements whose Kazhdan constant is 
\[
 \varepsilon=\sqrt{2-2(\sqrt{2m-1}/m)}
\]
(and we already remarked that these estimates immediately translate in estimates for the 
Cheeger constants of the approximating graphs).
As a concrete example, he proves that the matrices
\begin{displaymath}
%  S\coloneqq
%  \Big\{
 \left(\begin{array}{c|c}
  \begin{array}{cc}
  1 & 2 \\
  0 & 1
  \end{array}  & 0 \\
  \hline
  0 & I_{n-2} 
 \end{array}\right),
\left(\begin{array}{c|c}
  \begin{array}{cc}
  1 & 0 \\
  2 & 1
  \end{array}  & 0 \\
  \hline
  0 & I_{n-2} 
 \end{array}\right)
%  \Big\}
\end{displaymath}
form a Kazhdan set of two elements for $\Sl_n(\RR)$ for every $n\geq 3$.

\

{\bf Compact Lie groups.}
Theorem \ref{thm:shalom.theorem.C} can only be applied to non\=/compact Lie groups. 
Still, the case of compact Lie groups is all but devoid of interest. In fact, one can 
show that every simple, connected, compact Lie group admits finite Kazhdan 
sets (see \cite[Section 5]{Sha99} for more on this). 

Explicit examples are provided by 
Bourgain and Gamburd in \cite{BoGa07}. There they prove that if $k$ elements 
$g_1,\ldots,g_k\in \SU(2)$ generate a free subgroup of $\SU(2)$ and they satisfy a 
non\=/abelian diophantine property then they form a Kazhdan set of $\SU(2)$.
In particular, they show that when two matrices with algebraic entries $a,b\in \SU(2)\cap 
\Gl_2(\overline{\QQ})$ freely generate a 
free group $\Gamma<\SU(2)$, then every ergodic action of $\SU(2)$ restricts to an 
expanding action of $\Gamma$ (here the generating set we should use is actually 
$S=\{1,a^\pm,b^\pm\}$ ). 

An obvious example of an ergodic action of $\SU(2)$ on a 
compact space is the action by left multiplication of $\SU(2)$ on itself. Alternatively, 
note that $\SU(2)$ is the double cover of 
$\SO(3)$ and the action of the latter on the sphere $S^2$ is ergodic. Thus, we obtain the 
following:

\begin{cor}
 Let $a$ and $b$ be two independent rotations of $S^2$ whose matrices have algebraic 
entries and let $F_2=\angles{a,b}$ be the generated subgroup of $\SO(3)$. Then any 
unbounded sequence of level sets of the warped cone $\cone_{F_2}\big(S^2\big)$ forms a 
family of expanders.
\end{cor}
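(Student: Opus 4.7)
The plan is to combine the Bourgain--Gamburd spectral gap theorem (cited just before the corollary) with Corollary \ref{cor:warp.cones.exp_iff_spec.gap}, using the double cover $\varphi\colon\SU(2)\to\SO(3)$ as a bridge. Everything else in the argument is essentially a bookkeeping of how ergodicity, Kazhdan sets, and expansion in measure interact in the tools already established.

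First, I would lift $a$ and $b$ to elements $\tilde a,\tilde b\in\SU(2)\cap\Gl_2(\overline{\QQ})$ along the algebraic covering $\varphi$. Since $\varphi$ has finite kernel and since $\langle a,b\rangle$ is free of rank $2$, up to replacing $\tilde a,\tilde b$ by suitable products with a central element one can arrange that $\langle\tilde a,\tilde b\rangle$ is still a free group of rank $2$ (freeness is inherited because the projection to $\SO(3)$ is free). The Bourgain--Gamburd theorem then guarantees that the set $S\coloneqq\{1,\tilde a^{\pm 1},\tilde b^{\pm 1}\}$ is a finite Kazhdan set for $\SU(2)$.

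Next, I would note that the tautological action $\SO(3)\curvearrowright\SS^2$ by rotations is transitive, hence ergodic, and it preserves the normalised Riemannian volume $\nu$ (the unique $\SO(3)$-invariant probability measure on $\SS^2$). Pulling back by $\varphi$, the action $\SU(2)\curvearrowright(\SS^2,\nu)$ is a measure-preserving ergodic action. By Theorem \ref{thm:finite.Kazhdans_iff_every.action.expanding}, the restriction of this $\SU(2)$-action to the subgroup $\langle\tilde a,\tilde b\rangle$ is expanding in measure. Because the $\SU(2)$-action factors through $\varphi$ and $\varphi(\langle\tilde a,\tilde b\rangle)=F_2$, the resulting $F_2$-action on $(\SS^2,\nu)$ coincides with the original rotation action and is therefore expanding in measure.

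Finally, the elements of $F_2$ act on $\SS^2$ by isometries, so they are quasi\=/symmetric homeomorphisms preserving the Riemannian measure (bounded measure distortion with $\Theta=1$). The hypotheses of Corollary \ref{cor:warp.cones.exp_iff_spec.gap} are therefore satisfied, and we conclude that any unbounded sequence of level sets of $\cone_{F_2}(\SS^2)$ forms a family of expanders. The only genuinely substantive ingredient is the Bourgain--Gamburd spectral gap, which is used as a black box; all remaining steps are transparent applications of the machinery built in the previous sections.
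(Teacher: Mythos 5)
Your argument is correct and follows essentially the same route as the paper, which likewise deduces the corollary from the Bourgain--Gamburd Kazhdan-set result for algebraic free pairs in $\SU(2)$, the ergodicity of the $\SU(2)$-action on $\SS^2$ through the double cover, Theorem \ref{thm:finite.Kazhdans_iff_every.action.expanding}, and Corollary \ref{cor:warp.cones.exp_iff_spec.gap}. If anything, you are more careful than the paper in making the lift along $\varphi\colon\SU(2)\to\SO(3)$ explicit and in arranging (via the splitting of the central $\ZZ/2$-extension of $F_2$) that the lifted generators still generate a free group, a point the paper leaves implicit.
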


\begin{rmk}
 The existence of actions by rotations on the sphere $S^2$ that are expanding in measure 
has already been successfully used in relation to the Ruziewicz problem and to the 
problem of constructing finite equidistributed subsets of $S^2$ \cite{Lub10}.
\end{rmk}

\

{\bf A conjecture of Gamburd, Jakobson and Sarnak.}
The results of \cite{BoGa07} have been later extended to $\SU(n)$ for any $n\geq 2$ in 
\cite{BoGa10} and subsequently to all compact simple Lie groups  in 
\cite{BeSa14}. These works build on the notion of \emph{non\=/abelian diophantine 
property} introduced 
in \cite{GJS99} in order to study spectral gap properties for generic 
subgroups 
of rotations.

For a generic $k$\=/tuple of elements in $\SU(2)$, it is known that the action on $S^2$ 
of the generated subgroup $\Gamma<\SU(2)$ is ergodic; and it is conjectured in 
\cite{GJS99} that the action of $\Gamma$ should also have a spectral gap 
(which is a much stronger property). A partial result is due to Fisher \cite{Fis06} who 
managed to prove that if the conjecture 
is false then the set of $k$\=/tuples inducing actions with spectral gap must have null 
measure. 
It is unknown whether the group generated by a generic $k$\=/tuple has the 
non\=/abelian diophantine property (an affirmative answer to the latter would clearly 
imply the conjecture).

More generally, it is unknown whether the action by left
multiplication $\Gamma\curvearrowright G$ of a generic finitely generated dense subgroup 
$\Gamma$ of a compact simple Lie group $G$ has a spectral gap. Corollary 
\ref{cor:warp.cones.exp_iff_spec.gap} immediately implies that the last conjecture is 
equivalent to the statement that generic warped cones form families of expanders:

\begin{thm}
 A generic dense subgroup $\Gamma$ of a compact simple Lie group $G$ has a spectral gap 
if and only if one (any) unbounded sequences of the level sets of a generic warped cone 
$\cone_\Gamma(G)$ forms a family of expanders.
\end{thm}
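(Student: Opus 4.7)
The claim is essentially a direct translation, via Corollary \ref{cor:warp.cones.exp_iff_spec.gap}, of the conjectural spectral property into a geometric expansion statement about warped cones. The plan is therefore to verify that the hypotheses of Corollary \ref{cor:warp.cones.exp_iff_spec.gap} are met by the left multiplication action $\Gamma \curvearrowright G$, and then to apply the corollary verbatim, noting that the word \emph{generic} appears on both sides with the same meaning.

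First, I would equip the compact simple Lie group $G$ with a bi-invariant Riemannian metric $\varrho$ (which exists by averaging any Riemannian metric under the compact group action). The associated Riemannian volume form then coincides, up to a positive scalar, with the normalised Haar measure $\nu$ on $G$. With respect to such a bi-invariant $\varrho$, every left multiplication map $L_\gamma \colon G \to G$ is an isometry and in particular a $1$-quasi-symmetric homeomorphism (with control $\eta(t) = t$) with trivial measure distortion $\Theta = 1$. Hence, for any finite symmetric generating set $S$ of $\Gamma$ containing the identity, the action $\Gamma = \angles{S} \curvearrowright (G, \varrho)$ satisfies the full set of hypotheses of Corollary \ref{cor:warp.cones.exp_iff_spec.gap}: it is by quasi-symmetric homeomorphisms and preserves the Riemannian measure.

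Applying Corollary \ref{cor:warp.cones.exp_iff_spec.gap} then yields, for every such $\Gamma$, that one (equivalently, any) unbounded sequence of level sets of the warped cone $\cone_\Gamma(G)$ forms a family of expanders if and only if the action $\Gamma \curvearrowright (G,\nu)$ has a spectral gap in $L^2_0(G)$. The last bit of bookkeeping is to observe that ``spectral gap of the action by left multiplication on $G$'' is precisely what ``spectral gap of $\Gamma$'' means in this setting, and that the word ``generic'' simply quantifies over the choice of $\Gamma$ (or equivalently, over a generating $k$-tuple in $G^k$); since the equivalence provided by Corollary \ref{cor:warp.cones.exp_iff_spec.gap} is valid for any such choice, it holds in particular on any full-measure subset of $G^k$.

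There is no real obstacle here: the only subtlety worth mentioning is the choice of a bi-invariant Riemannian structure so as to make left translations genuinely isometric (and thus automatically quasi-symmetric and measure-preserving). Once that is fixed, the theorem is a one-line consequence of Corollary \ref{cor:warp.cones.exp_iff_spec.gap}.
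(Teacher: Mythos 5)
Your proposal is correct and follows essentially the same route as the paper, which deduces the theorem directly from Corollary \ref{cor:warp.cones.exp_iff_spec.gap} applied to the left-multiplication action; your choice of a bi-invariant metric (rather than the paper's observation that any Riemannian volume form is comparable to Haar measure up to bounded positive factors) is a harmless variant of the same verification of the hypotheses.
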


\

{\bf Warped cones and finite Kazhdan sets of compact groups.}
As a last note, we wish to report a nice feature of compact Lie group already noted in 
\cite{Sha99}. Let $G$ be a compact Lie group with Haar measure $\nu$ and consider its 
action on itself by left multiplication. The induced unitary representation $\pi_R\colon 
G\to\U(L^2(G))$ is called the \emph{(left) regular representation} of $G$. 
Notice that the regular representation decompose as the 
direct sum of its restriction to $L^2_0(G)$ and the trivial representation. The 
Peter\=/Weyl theorem implies the following strong version of Theorem 
\ref{thm:finite.Kazhdans_iff_every.action.expanding}:

\note{hidden in BdHV there must be a proof of it}

\begin{lem}
 A finite set $S\subset G$ is a Kazhdan set of $G$ if and only if the restriction of 
$\pi_R |_{L^2_0(G)}$ to the generated group $\Gamma=\angles{S}$ has a spectral gap. 
\end{lem}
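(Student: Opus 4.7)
The forward implication is essentially tautological. If $S$ is a Kazhdan set of $G$ with Kazhdan constant $\varepsilon>0$, then $(S,\varepsilon)$ is a Kazhdan pair for every continuous unitary $G$\=/representation without invariant vectors. The representation $\pi_R|_{L^2_0(G)}$ has no non\=/zero $G$\=/invariant vector (the only $G$\=/invariant functions in $L^2(G)$ are constants, which have been factored out). Hence $(S,\varepsilon)$ is a Kazhdan pair for $\pi_R|_{L^2_0(G)}$, and since the defining inequalities $\norm{\pi_R(s)f-f}\leq\varepsilon\norm{f}$ involve only elements of $S\subseteq\Gamma$, the same constant shows that no almost invariant vectors exist for $\pi_R|_{L^2_0(G)}|_\Gamma$; this is precisely the spectral gap condition.

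For the converse, the key ingredient is the Peter\=/Weyl theorem. It asserts that for a compact group $G$ the regular representation decomposes as the Hilbert direct sum of \emph{all} irreducible unitary representations of $G$, each appearing with multiplicity equal to its dimension. In particular, every non\=/trivial irreducible unitary representation $\sigma$ of $G$ embeds isometrically (and $G$\=/equivariantly) as a subrepresentation of $\pi_R|_{L^2_0(G)}$. I would work with the convolution operator $\pi(\mu_S)$ associated with the equidistributed measure $\mu_S=\frac{1}{\abs{S}}\sum_{s\in S}\delta_s$, since, as recorded earlier in the paper, a unitary representation $\pi$ has $S$ as a Kazhdan set precisely when $\norm{\pi(\mu_S)}<1$, and this formulation behaves well under direct sums.

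The spectral gap hypothesis gives $\norm{\pi_R|_{L^2_0(G)}(\mu_S)}\leq c$ for some $c<1$. Since each non\=/trivial irreducible $\sigma$ sits as an invariant subspace of $\pi_R|_{L^2_0(G)}$, restricting the operator yields $\norm{\sigma(\mu_S)}\leq c$. Now any continuous unitary representation $\pi$ of the compact group $G$ decomposes as a genuine Hilbert direct sum of irreducibles $\pi\cong\bigoplus_i \sigma_i$, and if $\pi$ has no $G$\=/invariant vector then none of the $\sigma_i$ is trivial. The convolution operator respects this decomposition, so
\[
\norm{\pi(\mu_S)}=\sup_i\norm{\sigma_i(\mu_S)}\leq c<1.
\]
Therefore every $\pi$ without invariant vectors admits a uniform spectral gap with constant depending only on $c$, i.e.\ $S$ is a Kazhdan set of $G$ (by Remark~\ref{rmk:Kazhdan.set_iff_kazhdan.foreach.rep}, since the same bound $c$ yields a uniform Kazhdan constant across the family $\CU_0$).

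The only mildly delicate step is the transfer of Kazhdan constants across direct sums, which is the reason I phrase everything in terms of the operator norm $\norm{\pi(\mu_S)}$ rather than the pointwise Kazhdan inequality $\max_{s\in S}\norm{\pi(s)v-v}>\varepsilon\norm{v}$; the latter does not pass straightforwardly to direct sums because different summands may require different elements of $S$, whereas the operator norm of a direct sum is exactly the supremum of the summand norms. Everything else rests on the standard consequence of Peter\=/Weyl that unitary representations of compact groups are discretely decomposable and that every irreducible appears in the regular representation.
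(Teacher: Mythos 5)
Your argument is correct and is exactly the one the paper has in mind: the paper states the lemma with no written proof beyond the remark that it follows from Peter--Weyl, and your proof fleshes out precisely that route (every non-trivial irreducible embeds in $\pi_R|_{L^2_0(G)}$, transfer the bound via the convolution operator $\pi(\mu_S)$, and conclude by decomposing an arbitrary representation without invariant vectors into non-trivial irreducibles). Your care in phrasing the transfer through $\norm{\pi(\mu_S)}$ rather than the pointwise Kazhdan inequality is a sensible touch, since the operator norm of a direct sum is exactly the supremum of the summand norms.
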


\begin{cor}
 Let $G$ be a compact Lie group and let $\Gamma=\angles{S}$ where $S\in G$ is finite 
symmetric subset. Choose any Riemannian metric on $G$. Then the level sets of the warped 
cone $\cone_\Gamma (G)$ form a family of expanders if and only if 
$S$ is a Kazhdan set of $G$.
\end{cor}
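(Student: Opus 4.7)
The plan is to assemble this directly from the preceding Lemma and Corollary~\ref{cor:warp.cones.exp_iff_spec.gap}. First I would equip $G$ with a bi-invariant Riemannian metric $\varrho$, which exists on any compact Lie group by averaging an arbitrary Riemannian metric against the Haar measure. With respect to such a $\varrho$, every left multiplication is an isometry of $(G,\varrho)$, and the induced Riemannian probability measure coincides with the Haar measure $\nu$; in particular the action $\Gamma \curvearrowright G$ is by (quasi\=/symmetric) diffeomorphisms preserving the Riemannian measure, so it satisfies the hypotheses of Corollary~\ref{cor:warp.cones.exp_iff_spec.gap}.

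By that corollary, any unbounded sequence of level sets of $\cone_\Gamma(G)$ forms a family of expanders if and only if the action $\Gamma\curvearrowright (G,\nu)$ has a spectral gap, that is, if and only if the unitary representation of $\Gamma$ on $L^2_0(G)$ induced by left multiplication has no almost invariant vectors. But this representation is, by definition, the restriction to $\Gamma$ of the regular representation $\pi_R$ of $G$ on $L^2_0(G)$. The preceding Lemma then identifies this spectral gap condition precisely with $S$ being a Kazhdan set of $G$, giving the desired equivalence in the bi-invariant case.

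Finally, to remove the assumption that $\varrho$ is bi-invariant, observe that on the compact manifold $G$ any two Riemannian metrics are bi-Lipschitz equivalent, hence so are the cone metrics $t^2\varrho+dt^2$ uniformly over $t\geq 1$, and warping preserves bi-Lipschitz equivalence. Consequently the coarse structure of each level set of $\cone_\Gamma(G)$, and in particular whether an unbounded sequence of them forms a family of expanders, is independent of the choice of Riemannian metric, so the characterization persists for any metric. The only real obstacle is this last bookkeeping step; once one reduces to the bi-invariant case the argument is simply the conjunction of two results already proved above.
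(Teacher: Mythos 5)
Your proof is correct, and the first half (bi\-/invariant metric, Riemannian volume $=$ Haar, Corollary~\ref{cor:warp.cones.exp_iff_spec.gap} plus the preceding Lemma) is exactly the assembly the paper has in mind when it says the statement is ``clearly true'' when the volume form is the Haar measure. Where you genuinely diverge is in the reduction from an arbitrary Riemannian metric to the good case. The paper never changes the metric: it keeps the given $\varrho$, observes that its volume form is $f(x)\nu(x)$ with $0<c<f<C$ by compactness, and notes that expansion in measure is insensitive to replacing the measure by one with bounded density; it then runs the chain ``level sets expanders $\Leftrightarrow$ expanding w.r.t.\ $f\nu$ $\Leftrightarrow$ expanding w.r.t.\ $\nu$ $\Leftrightarrow$ spectral gap $\Leftrightarrow$ Kazhdan set'' (this is why the Section~\ref{sec:exp.and.warped.cones} theorem is stated for quasi\-/symmetric actions with bounded measure distortion rather than measure\-/preserving ones: left multiplication does not preserve $f\nu$). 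You instead fix the measure\-/theoretic side once and for all by passing to a bi\-/invariant metric, and absorb the change of metric on the geometric side: two Riemannian metrics on a compact manifold are bi\-/Lipschitz, this propagates uniformly to the cone metrics $t^2\varrho+dt^2$ and survives warping (clear from the chain formula of Remark~\ref{rmk:expression.for.wdist}, since the constant is $\geq 1$ and the jump costs are unchanged), and ``forms a family of expanders'' is by Definition~\ref{de:expander.metric.spaces} invariant under uniform quasi\-/isometry. Both routes are sound; the paper's is shorter because the measure\-/distortion robustness is already built into its warped\-/cone theorem, while yours makes the metric\-/independence of the coarse geometry of $\cone_\Gamma(G)$ explicit, which is a mildly stronger and independently useful observation.
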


\begin{proof}
 The statement is clearly true if the volume form induced by the Riemannian metric 
coincides with the Haar measure $\nu$. Any other Riemannian volume form equals 
$f(x)\nu(x)$ for some strictly positive smooth function $f$. Since $G$ is a compact, 
there 
are constants $0<c<C$ so that $c<f(x)<C$ $\forall x\in G$ and it follows that 
$\Gamma\curvearrowright G$ is expanding in measure with respect to $\nu$ if and only if 
it 
is expanding in measure with respect to $f(x)\nu(x)$.
\end{proof}

% \nocite{Ros81,Sch81}				%erg theory
% \nocite{BFGM07,Roe05}				%embeddings
% \nocite{CoGu11,HLW06,GaGa81}		%expanders
% \nocite{Sha99,BoGa07}		 %Kazhdan
% \nocite{GaGa81}				%actions onmeas space
% \nocite{Zim13}
% \nocite{Bek03,Lub12,Mat97,Oza04,Sha97,Pin73,NoSa15,Gro93}
% \nocite{BHV08}
% \nocite{GMP16,BoYe13}
% \nocite{Zim77,Rud91,BoGa08,Gab02,JeSi96,Cor08,Mar80,GHL12}
\bibliography{MainBibliography}{}
\bibliographystyle{amsalpha}

\end{document}